\documentclass[12pt]{amsart}

\usepackage[utf8]{inputenc}
\usepackage[T1,T2A]{fontenc}
\usepackage[english]{babel}
\usepackage{amsfonts}
\usepackage{amsbsy}
\usepackage{amssymb}
\usepackage{fixltx2e,graphicx}
\usepackage{mathrsfs}
\usepackage{hyperref}
\usepackage{longtable}
\usepackage{enumitem}
\usepackage{comment}

\catcode`~=11 
\newcommand{\urltilde}{\kern -.15em\lower .7ex\hbox{~}\kern .04em}
\catcode`~=13 

\textwidth=16.0cm \oddsidemargin=0.2cm \evensidemargin=0.2cm
\topmargin=-0.5cm \textheight=22.5cm

\makeatletter

\renewcommand{\@makecaption}[2]{
\vspace{\abovecaptionskip}%
\sbox{\@tempboxa}{#1. #2}%
\global\@minipagefalse \hbox to \hsize {{\scshape \hfil #1.
#2\hfil}} \vspace{\belowcaptionskip}}

\makeatother

\newcommand{\Hom}{\operatorname{Hom}}
\newcommand{\rk}{\operatorname{rk}}
\newcommand{\Supp}{\operatorname{Supp}}
\newcommand{\supp}{\operatorname{supp}}
\newcommand{\Ker}{\operatorname{Ker}}

\newcommand{\GL}{\operatorname{GL}}
\newcommand{\SL}{\operatorname{SL}}

\newcommand{\Spin}{\operatorname{Spin}}
\newcommand{\SO}{\operatorname{SO}}

\newcommand{\diag}{\operatorname{diag}}
\renewcommand{\div}{\operatorname{div}}
\newcommand{\ord}{\operatorname{ord}}

\newcommand{\ZZ}{\mathbb Z}
\newcommand{\QQ}{\mathbb Q}
\newcommand{\CC}{\mathbb C}

\newtheorem{theorem}{Theorem}

\newtheorem{proposition}[theorem]{Proposition}
\newtheorem{lemma}[theorem]{Lemma}
\newtheorem{corollary}[theorem]{Corollary}

\newtheorem*{question*}{Question}

\theoremstyle{definition}

\newtheorem{example}[theorem]{Example}

\theoremstyle{remark}

\newtheorem{remark}[theorem]{Remark}

\makeatletter

\@addtoreset{theorem}{section}

\makeatother

\numberwithin{equation}{section}

\numberwithin{equation}{section}


\newcounter{num}
\newcommand{\newstep}{\refstepcounter{num}\arabic{num}}

\righthyphenmin=3

\begin{document}

\renewcommand{\proofname}{Proof}
\renewcommand{\abstractname}{Abstract}
\renewcommand{\refname}{References}
\renewcommand{\figurename}{Figure}
\renewcommand{\tablename}{Table}

\title
{On extended weight monoids of spherical homogeneous spaces}

\author{Roman Avdeev}

\dedicatory{To the memory of my teacher Ernest Borisovich Vinberg}

\address{%
{\bf Roman Avdeev} \newline\indent National Research University ``Higher School of Economics'', Moscow, Russia}

\email{suselr@yandex.ru}


\subjclass[2010]{14M27, 14M17, 20G05}

\keywords{Algebraic group, spherical variety, spherical subgroup, weight monoid, extended weight monoid}

\begin{abstract}
Given a connected reductive complex algebraic group~$G$ and a spherical subgroup~$H \subset G$, the extended weight monoid $\widehat \Lambda^+_G(G/H)$ encodes the $G$-module structures on spaces of global sections of all $G$-linearized line bundles on~$G/H$.
Assuming that $G$ is semisimple and simply connected and $H$ is specified by a regular embedding in a parabolic subgroup $P \subset G$, in this paper we obtain a description of $\widehat \Lambda^+_G(G/H)$ via the set of simple spherical roots of~$G/H$ together with certain combinatorial data explicitly computed from the pair~$(P,H)$.
As an application, we deduce a new proof of a result of Avdeev and Gorfinkel describing $\widehat \Lambda^+_G(G/H)$ in the case where $H$ is strongly solvable.
\end{abstract}

\maketitle

\section{Introduction}

Throughout this paper, we work over the field $\CC$ of complex numbers.

Let $G$ be a connected reductive algebraic group and let $B$ be a Borel subgroup of~$G$.
A $G$-variety (that is, an algebraic variety equipped with a regular action of~$G$) is said to be \textit{spherical} if it is irreducible, normal, and possesses an open $B$-orbit.
In particular, one may speak of spherical homogeneous spaces and spherical (finite-dimensional) $G$-modules.
A closed subgroup $H \subset G$ is said to be \textit{spherical} if $G/H$ is a spherical homogeneous space.

Let $X$ be a normal irreducible $G$-variety.
According to a result of Vinberg and Kimelfeld \cite[Theorem~2]{VK78}, $X$ being spherical implies that the algebra $\CC[X]$ of regular functions on~$X$ is multiplicity free as a $G$-module, and the converse is also true if $X$ is quasi-affine.
The highest weights of all simple $G$-modules that occur in~$\CC[X]$ form a monoid, called the \textit{weight monoid} of~$X$; we denote it by~$\Lambda^+_G(X)$.
If $X$ is spherical then $\Lambda^+_G(X)$ uniquely determines the $G$-module structure on~$\CC[X]$ and hence is an important invariant of~$X$.

Now suppose $X = G/H$ is a homogeneous space.
Then another result of Vinberg and Kimelfeld~\cite[Theorem~1]{VK78} asserts that $G/H$ being spherical is equivalent to the following property: for every $G$-linearized line bundle $\mathcal L$ on~$G/H$, the space $\Gamma(\mathcal L)$ of global sections of~$\mathcal L$ is multiplicity free as a $G$-module.
Thanks to the Frobenius reciprocity theorem (see~\cite[\S\,2]{VK78} or~\cite[Corollary~2.13]{Tim}), the latter property can be reformulated as follows: for every simple finite-dimensional $G$-module~$V$ and every character $\chi$ of~$H$, the subspace $V^{(H)}_\chi$ of $H$-semiinvariant vectors in~$V$ of weight~$\chi$ is at most one-dimensional (see details in~\S\S\,\ref{subsec_LLB},\,\ref{subsec_EWM}).

Let $\mathfrak X(B), \mathfrak X(H)$ be the character groups of $B,H$ (in additive notation), respectively, and let $\Lambda^+_G \subset \mathfrak X(B)$ be the monoid of dominant weights of~$G$ with respect to~$B$.
For every $\lambda \in \Lambda^+_G$, denote by~$V_G(\lambda)$ the simple $G$-module with highest weight~$\lambda$ and let $V_G(\lambda)^*$ be the respective dual $G$-module.
Then the set
\[
\widehat \Lambda^+_G(G/H) = \lbrace (\lambda, \chi) \in \Lambda^+_G \times \mathfrak X(H) \mid [V_G(\lambda)^*]^{(H)}_\chi \ne 0 \rbrace
\]
is a monoid, called the \textit{extended weight monoid} (or \textit{extended weight semigroup}) of~$G/H$.
The word ``extended'' is justified by the fact that the map $\lambda \mapsto (\lambda, 0)$ identifies $\Lambda^+_G(G/H)$ with the submonoid $\lbrace (\lambda,\chi) \mid \chi = 0 \rbrace \subset \widehat \Lambda^+_G(G/H)$.
In particular, $\Lambda^+_G(G/H) \simeq \widehat \Lambda^+_G(G/H)$ whenever $\mathfrak X(H)$ is trivial.
If $G/H$ is spherical then $\widehat \Lambda^+_G(G/H)$ uniquely determines the $G$-module structures on spaces of global sections of all $G$-linearized line bundles on~$G/H$, therefore a natural problem is to find explicit formulas and/or algorithms for computing the extended weight monoids for spherical homogeneous spaces.

Another motivation for the above-mentioned problem comes from the general theory of spherical varieties.
Given a spherical homogeneous space $G/H$, one defines three main combinatorial invariants: the \textit{weight lattice} $\Lambda_G(G/H) \subset \mathfrak X(B)$, the finite set $\Sigma_G(G/H) \subset \Lambda_G(G/H)$ of \textit{spherical roots}, and the finite set $\mathcal D_G(G/H)$ of $B$-stable prime divisors in~$G/H$, called \textit{colors}, which is regarded as an abstract set equipped with a map to the dual lattice of $\Lambda_G(G/H)$ (see the precise definitions in~\S\,\ref{subsec_spherical_varieties}).
These invariants play a crucial role in the combinatorial classification of spherical homogeneous spaces (see~\cite[\S\,30.11]{Tim},~\cite{BP16}, and references therein) and their equivariant embeddings (see~\cite{LV} or~\cite{Kn91}), therefore an important problem is to find effective methods for computing them.
Now the significance of the extended weight monoid $\widehat \Lambda^+_G(G/H)$ is demonstrated by the fact that the invariants $\Lambda_G(G/H)$ and $\mathcal D_G(G/H)$ can be expressed purely in terms of $\widehat \Lambda^+_G(G/H)$ and moreover the pair $(\Sigma_G(G/H), \widehat \Lambda^+_G(G/H))$ uniquely determines $H$ up to conjugacy; see~\cite[\S\,2.3]{Avd_solv_inv} for details.

An interesting application of the extended weight monoids of spherical homogeneous spaces to matrix-valued orthogonal polynomials can be found in~\cite{PeVP}.

For a general spherical homogeneous space $G/H$, computing the monoid $\widehat \Lambda^+_G(G/H)$ easily reduces to the case where $G$ is semisimple and simply connected (see Remark~\ref{rem_relax}).
In the latter case it is well known that $\widehat \Lambda^+_G(G/H)$ is naturally isomorphic to the monoid of $B$-stable effective divisors on~$G/H$ (see Theorem~\ref{thm_ewm_is_free}); in particular, $\widehat \Lambda^+_G(G/H)$ is free and its indecomposable elements are in bijection with the set $\mathcal D_G(G/H)$ of colors.
More specifically, given a color $D \in \mathcal D_G(G/H)$, the corresponding indecomposable element of $\widehat \Lambda^+_G(G/H)$ is precisely the $(B \times H)$-biweight of a (unique up to proportionality) regular function on~$G$ whose divisor of zeros is the pullback of~$D$ via the map $G \to G/H$.

In this paper, we are concerned with the problem of computing the monoid $\Lambda^+_G(G/H)$ for a spherical subgroup $H$ of a simply connected semisimple group~$G$ in terms of a regular embedding of $H$ in a parabolic subgroup $P \subset G$, where ``regular'' means the inclusion $H_u \subset P_u$ of the unipotent radicals of $H$ and~$P$, respectively.
In this setting, a complete description of $\widehat \Lambda^+_G(G/H)$ is known essentially in the following particular cases:
\begin{enumerate}[label=\textup{(C\arabic*)},ref=\textup{C\arabic*}]
\item \label{case_P=G}
$P = G$ (that is, $H$ is reductive, in which case $G/H$ is affine);

\item \label{case_P=B}
$P = B$ (subgroups contained in a Borel subgroup of $G$ are called \textit{strongly solvable}).
\end{enumerate}
In case~(\ref{case_P=G}), there is a complete classification of spherical homogeneous spaces $G/H$ with reductive $H$ due to \cite{Kr, Mi, Br87}, and the monoids $\widehat \Lambda^+_G(G/H)$ (or in some cases the closely related monoids~$\Lambda_G^+(G/H)$) are known for them thanks to~\cite{Kr, Avd_EWS}.
In case~(\ref{case_P=B}), the monoids $\widehat \Lambda^+_G(G/H)$ were described in~\cite{AvdG} via a structure theory of strongly solvable spherical subgroups developed in~\cite{Avd_solv}.

Coming back to an arbitrary spherical subgroup $H \subset G$ regularly embedded in a parabolic subgroup $P \subset G$, we now outline the approach for computing $\widehat \Lambda^+_G(G/H)$ used in this paper.
Choose Levi subgroups $K \subset H$, $L \subset P$ such that $K \subset L$ and recall from the above discussion that the indecomposable elements of $\widehat \Lambda^+_G(G/H)$ are in bijection with the colors of~$G/H$.
Consider the chain of fibrations $G/H \to G/KP_u \to G/P$ with irreducible fibers.
Then the colors of~$G/H$ naturally fall into the following three types:
\begin{enumerate}[label=\textup{(\arabic*)},ref=\textup{\arabic*}]
\item \label{type1}
pullbacks of colors of $G/P$;

\item \label{type2}
pullbacks of colors of $G/KP_u$ that are not of type~(\ref{type1});

\item \label{type3}
all the remaining colors.
\end{enumerate}
The colors of type~(\ref{type1}) are well understood and the corresponding $(B \times H)$-biweights are easy to write down.
The colors of type~(\ref{type2}) are in a natural bijection with the colors of~$L/K$ and the corresponding $(B \times H)$-biweights are not hard to determine using the above-mentioned classification results on affine spherical homogeneous spaces.
The colors of type~(\ref{type3}) are more elusive and our main result (see Theorem~\ref{thm_gen_3}) expresses their $(B\times H)$-biweights via those of colors of types~(\ref{type1}) and~(\ref{type2}), known classification results on spherical modules (see Remark~\ref{rem_SM}), and the set $\Pi \cap \Sigma_G(G/H)$ where $\Pi$ is the set of simple roots of~$G$.

As can be seen, the main obstacle for using our results in practical calculations is the problem of determining the set $\Pi \cap \Sigma_G(G/H)$ of simple spherical roots of~$G/H$.
In turn, a certain progress in computing the whole set $\Sigma_G(G/H)$ has been made in~\cite{Avd_degen}.
Namely, loc. cit. proposes effective algorithms for computing $\Sigma_G(G/H)$ in the case $L' \subset K \subset\nobreak L$ (where $L'$ is the derived subgroup of~$L$) along with a number of results that can help to determine $\Sigma_G(G/H)$ for a concrete spherical subgroup~$H$ whenever one is lucky to construct degenerations of the Lie algebra of~$H$ with special properties.
In our Example~\ref{ex_SL6}, we demonstrate a concrete realization of the latter idea by computing $\Sigma_G(G/H)$ for a spherical subgroup $H$ with $L' \not\subset K$.
In any case, combining our results with those of loc. cit. we get an effective procedure for computing $\widehat \Lambda^+_G(G/H)$ for spherical subgroups $H$ satisfying $L' \subset K \subset L$; see Example~\ref{ex_SO7} for a demonstration.

The discussion in the previous paragraph suggests that it is still an important problem to find a way to compute the set $\Pi \cap \Sigma_G(G/H)$ directly from $P$ and~$H$.
For a given $\alpha \in\nobreak \Pi$, we propose in Propositions~\ref{prop_alpha_in_Sigma1} and~\ref{prop_alpha_in_Sigma2} several necessary and one sufficient condition for $\alpha$ to lie in~$\Sigma_G(G/H)$.
Fortunately, in the case where $H$ is strongly solvable (so that $P$ is a Borel subgroup) the sufficient condition is automatically fulfilled for all~$\alpha$ satisfying all the necessary conditions, and this enables us to deduce a new proof of the main result of~\cite{AvdG} on the description of~$\widehat \Lambda^+_G(G/H)$ for such~$H$.
We point out that the proof in loc. cit. was based on explicit constructions of $H$-semiinvariant vectors in certain simple $G$-modules whereas our new proof avoids them.
Unfortunately, in the general case our necessary/sufficient conditions are definitely not enough to compute the set $\Pi \cap \Sigma_G(G/H)$: Example~\ref{ex_SO7} exhibits a simple root $\alpha \in \Pi \setminus \Sigma_G(G/H)$ satisfying all our necessary conditions.
Thus computing $\Pi \cap \Sigma_G(G/H)$ directly remains an open problem, and we discuss in~\S\,\ref{sect_concluding_remarks} possible directions of further research related to it.

This paper is organized as follows.
In~\S\,\ref{sect_notation} we set up notation and conventions.
In~\S\,\ref{sect_preliminaries} we collect all the material needed to state our main results.
In~\S\,\ref{sect_main_results} we state the main results of this paper.
In~\S\,\ref{sect_tools} we gather several auxiliary results needed in our proofs.
The main results are proved in~\S\,\ref{sect_proofs}.
In~\S\,\ref{sect_strongly_solvable} we provide a new proof of the main result of~\cite{AvdG}.
In~\S\,\ref{sect_examples} we present several examples of computing~$\widehat \Lambda^+_G(G/H)$.
In~\S\,\ref{sect_concluding_remarks} we discuss the problem of computing the set~$\Pi \cap \Sigma_G(G/H)$ directly.

\subsection*{Acknowledgements}

A part of this work was done while the author was visiting the Institut Fourier in Grenoble, France, in October~2019.
He thanks this institution for hospitality and excellent working conditions and also expresses his gratitude to Michel Brion for support, valuable comments on this work, and especially for communicating the proof of Proposition~\ref{prop_pullback}(\ref{prop_pullback_a}).
Thanks are also due to Dmitry Timashev and Vladimir Zhgoon for useful discussions as well as to the referee for helpful comments and suggestions.

This work was supported by the Russian Science Foundation, grant no.~18-71-00115.

\section{Notation and conventions}
\label{sect_notation}

Throughout this paper, all topological terms relate to the Zariski topology, all groups are assumed to be algebraic unless they explicitly appear as character groups, all subgroups of algebraic groups are assumed to be algebraic.
The Lie algebras of algebraic groups denoted by capital Latin letters are denoted by the corresponding small Gothic letters.
Given a group~$K$, a~\textit{$K$-variety} is an algebraic variety equipped with a regular action of~$K$.

$\ZZ^+ = \lbrace z \in \ZZ \mid z \ge 0 \rbrace$;

$|X|$ is the cardinality of a finite set~$X$;

$\CC^\times$ is the multiplicative group of the field~$\CC$;

$e$ is the identity element of any group;

$V^*$ is the vector space of linear functions on a vector space~$V$;

$L^\vee = \Hom_\ZZ(L,\ZZ)$ is the dual lattice of a lattice~$L$;

$\mathfrak X(K)$ is the character group (in additive notation) of a group~$K$;

$K^0$ is the connected component of the identity of an algebraic group~$K$;

$K'$ is the derived subgroup of a group~$K$;

$K_u$ is the unipotent radical of an algebraic group~$K$;

$K_x$ is the stabilizer of a point $x$ of a $K$-variety;

$Z(K)$ is the center of a group~$K$;

$\CC[X]$ is the algebra of regular functions on an algebraic variety~$X$;

$\CC(X)$ is the field of rational functions on an irreducible algebraic variety~$X$;

$\Gamma(\mathcal L)$ is the space of global sections of a line bundle $\mathcal L$ on a smooth algebraic variety;

$V^{(K)}_\chi$ is the space of semi-invariants of weight $\chi \in \mathfrak X(K)$ for an action of a group $K$ on a vector space~$V$;

$T_x X$ is the tangent space of an algebraic variety~$X$ at a point~$x \in X$;

$A^T$ is the transpose of a matrix~$A$;

$G$ is a connected reductive algebraic group;

$B \subset G$ is a fixed Borel subgroup;

$T \subset B$ is a fixed maximal torus;

$B^-$ is the Borel subgroup of~$G$ opposite to~$B$ with respect to~$T$, so that $B \cap B^- = T$;

$(\cdot\,,\,\cdot)$ is a fixed inner product on~$\QQ\mathfrak X(T)$ invariant with respect to the Weyl group $N_G(T)/T$;

$\Delta \subset \mathfrak X(T)$ is the root system of~$G$ with respect to~$T$;

$\Delta^+ \subset \Delta$ is the set of positive roots with respect to~$B$;

$\Pi \subset \Delta^+$ is the set of simple roots with respect to~$B$;

$\mathfrak g_\alpha \subset \mathfrak g$ is the root subspace corresponding to a root~$\alpha \in \Delta$;

$\Lambda_G^+ \subset \mathfrak X(T)$ is the set of dominant weights of $G$ with respect to~$B$;

$V_G(\lambda)$ is a simple $G$-module with highest weight $\lambda \in \Lambda_G^+$.

The groups $\mathfrak X(B)$ and $\mathfrak X(T)$ are identified via restricting characters from~$B$ to~$T$.

For every $\beta = \sum \limits_{\alpha \in \Pi} k_\alpha \alpha \in \Delta^+$, we put $\Supp \beta = \lbrace \alpha \in \Pi \mid k_\alpha > 0 \rbrace$.

When $G$ is semisimple, for every $\alpha \in \Pi$ we let $\varpi_\alpha \in \mathfrak X(T) \otimes_\ZZ \QQ$ be the corresponding fundamental weight.
Then every $\lambda \in \Lambda^+_G$ is uniquely expressed as $\lambda = \sum \limits_{\alpha \in \Pi} c_\alpha \varpi_\alpha$ with $c_\alpha \in \ZZ^+$, and we put $\supp \lambda = \lbrace \varpi_\alpha \mid c_\alpha > 0 \rbrace$.

Given a parabolic subgroup $P \subset G$ such that $P \supset B$ or $P \supset B^-$, we identify $\mathfrak X(P)$ with a sublattice of~$\mathfrak X(T)$ via restricting characters from~$P$ to~$T$.
The unique Levi subgroup $L$ of $P$ containing~$T$ is called the \textit{standard Levi subgroup} of~$P$.
Let $\Pi_L \subset \Pi$ be the set of simple roots of~$L$.

\section{Preliminaries}
\label{sect_preliminaries}

\subsection{Spherical varieties}
\label{subsec_spherical_varieties}

Recall from the introduction that a $G$-variety $X$ is said to be \textit{spherical} if it is irreducible, normal, and possesses an open $B$-orbit.
A subgroup $H \subset G$ is said to be \textit{spherical} if $G/H$ is a spherical homogeneous space.

In this subsection, we define several combinatorial invariants of spherical varieties that will be needed in our paper.

Let $X$ be a spherical $G$-variety.

By definition, the \textit{weight lattice} of $X$ is
$\Lambda_G(X) = \lbrace \lambda \in \mathfrak X(T) \mid \CC(X)_\lambda^{(B)} \ne \lbrace 0 \rbrace \rbrace$.
Since $B$ has an open orbit in~$X$, it follows that for every $\lambda \in \Lambda_G(X)$ the space $\CC(X)^{(B)}_\lambda$ has dimension~$1$ and hence is spanned by a nonzero function~$f_\lambda$.

Put $\mathcal Q_G(X) = \Hom_\ZZ(\Lambda_G(X), \QQ)$.

Every discrete $\QQ$-valued valuation $v$ of the field $\CC(X)$ vanishing on $\CC^\times$ determines an element $\rho_v \in \mathcal Q_G(X)$ such that $\rho_v(\lambda) = v(f_\lambda)$ for all $\lambda \in \Lambda_G(X)$.
It is known that the restriction of the map $v \mapsto \rho_v$ to the set of $G$-invariant discrete $\QQ$-valued valuations of $\CC(X)$ vanishing on~$\CC^\times$ is injective (see~\cite[7.4]{LV} or~\cite[Corollary~1.8]{Kn91}) and its image is a finitely generated cone containing the image in~$\mathcal Q_G(X)$ of the antidominant Weyl chamber (see \cite[Proposition~3.2 and Corollary~4.1,~i)]{BriP} or~\cite[Corollary~5.3]{Kn91}).
We denote this cone by~$\mathcal V_G(X)$; it is called the \textit{valuation cone} of~$X$.
Results of~\cite[\S\,3]{Bri90} imply that $\mathcal V_G(X)$ is a cosimplicial cone in~$\mathcal Q_G(X)$.
Consequently, there is a uniquely determined linearly independent set $\Sigma_G(X)$ of primitive elements in~$\Lambda_G(X)$ such that
\[
\mathcal V_G(X) = \lbrace q \in \mathcal Q_G(X) \mid q(\sigma) \le 0 \ \text{for all} \ \sigma \in \Sigma_G(X) \rbrace.
\]
Elements of $\Sigma_G(X)$ are called \textit{spherical roots} of~$X$.
The above discussion implies that every spherical root is a nonnegative linear combination of simple roots.
It also follows from loc.~cit. that $(\sigma_1,\sigma_2) \le 0$ for any two distinct spherical roots $\sigma_1,\sigma_2 \in \Sigma_G(X)$.

We mention the following well-known result.

\begin{proposition} \label{prop_two_SS}
Suppose $H_1,H_2 \subset G$ are two spherical subgroups such that $H_1 \subset H_2$.
Then
\begin{enumerate}[label=\textup{(\alph*)},ref=\textup{\alph*}]
\item \label{prop_two_SS_a}
$\Lambda_G(G/H_2) \subset \Lambda_G(G/H_1)$;
\item \label{prop_two_SS_b}
each $\sigma \in \Sigma_G(G/H_2)$ is a nonnegative linear combination of elements in~$\Sigma_G(G/H_1)$.
\end{enumerate}
\end{proposition}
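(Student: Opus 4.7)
The plan is to prove (a) by pulling back $B$-semiinvariant rational functions along the natural surjection $\pi\colon G/H_1 \to G/H_2$, and then to deduce (b) by a convex-duality argument combined with the basic observation that restriction to a $G$-stable subfield sends $G$-invariant valuations to $G$-invariant valuations.

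For (a), since $\pi$ is surjective (hence dominant), $\pi^*\colon\CC(G/H_2)\hookrightarrow\CC(G/H_1)$ is an injective $G$-equivariant homomorphism of fields. Consequently, for every $\lambda\in\Lambda_G(G/H_2)$, a nonzero $f_\lambda\in\CC(G/H_2)^{(B)}_\lambda$ pulls back to a nonzero element of $\CC(G/H_1)^{(B)}_\lambda$, giving $\lambda\in\Lambda_G(G/H_1)$.

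For (b), I would dualize the inclusion $\Lambda_G(G/H_2)\subset\Lambda_G(G/H_1)$ to a surjective restriction map $r\colon\mathcal Q_G(G/H_1)\twoheadrightarrow\mathcal Q_G(G/H_2)$. Pick $\sigma\in\Sigma_G(G/H_2)$ and regard it via (a) as an element of $\Lambda_G(G/H_1)$. By the standard duality between a finitely generated convex cone and its polar, $\sigma$ lies in the cone $\sum_{\tau\in\Sigma_G(G/H_1)}\QQ^+\tau\subset\Lambda_G(G/H_1)\otimes\QQ$ if and only if $q(\sigma)\le 0$ for every $q\in\mathcal Q_G(G/H_1)$ satisfying $q(\tau)\le 0$ on $\Sigma_G(G/H_1)$; by the defining property of the valuation cone, this last set of $q$ is precisely $\mathcal V_G(G/H_1)$. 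Thus the problem reduces to proving the inclusion $r\bigl(\mathcal V_G(G/H_1)\bigr)\subset\mathcal V_G(G/H_2)$.

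To verify this, take $q=\rho_v$ for some $G$-invariant discrete $\QQ$-valued valuation $v$ of $\CC(G/H_1)$ trivial on $\CC^\times$, and set $v' = v|_{\CC(G/H_2)}$. Either $v'$ is trivial, in which case $r(q)=0\in\mathcal V_G(G/H_2)$, or $v'$ is a $G$-invariant valuation of $\CC(G/H_2)$ of the same kind, so $\rho_{v'}\in\mathcal V_G(G/H_2)$ by definition. Since $\CC(G/H_1)^{(B)}_\lambda$ is one-dimensional, for each $\lambda\in\Lambda_G(G/H_2)$ the pullback $\pi^*f^{(2)}_\lambda$ of a generator $f^{(2)}_\lambda\in\CC(G/H_2)^{(B)}_\lambda$ is a nonzero scalar multiple of a generator $f^{(1)}_\lambda\in\CC(G/H_1)^{(B)}_\lambda$; hence $v(f^{(1)}_\lambda)=v'(f^{(2)}_\lambda)$, which identifies $r(\rho_v)$ with $\rho_{v'}$ and gives the needed inclusion. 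Evaluating at $\sigma$ yields $q(\sigma)=r(q)(\sigma)\le 0$, closing the argument. The only (mild) obstacle will be the bookkeeping around the identification $r(\rho_v)=\rho_{v'}$ and handling the trivial-restriction edge case; everything else is formal polyhedral duality.
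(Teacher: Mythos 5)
Your proof is correct and follows essentially the same route as the paper: part (a) via pullback of $B$-semiinvariant rational functions along $G/H_1 \to G/H_2$, and part (b) via the induced map on valuation cones combined with polyhedral duality. The only difference is that the paper invokes the surjectivity of $\mathcal V_G(G/H_1) \to \mathcal V_G(G/H_2)$ from \cite[Corollary~1.5]{Kn91}, whereas you observe (correctly) that the containment $r\bigl(\mathcal V_G(G/H_1)\bigr) \subset \mathcal V_G(G/H_2)$, which you verify directly by restricting $G$-invariant valuations to the subfield $\CC(G/H_2)$, already suffices.
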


\begin{proof}
The morphism $G/H_1 \to G/H_2$ yields the inclusion in~(\ref{prop_two_SS_a}) and a map $\mathcal V_G(G/H_1) \to \mathcal V_G(G/H_2)$.
By~\cite[Corollary~1.5]{Kn91} the latter map is surjective, whence~(\ref{prop_two_SS_b}).
\end{proof}

Given a $B$-stable prime divisor $D \subset X$, let $v_D$ be the valuation of the field $\CC(X)$ defined by~$D$, that is, $v_D(f) = \ord_D(f)$ for every $f \in \CC(X)$.
Observe that $\rho_{v_D}$ is actually contained in the lattice $\Lambda_G(X)^\vee \subset \mathcal Q_G(X)$.

A \textit{color} of $X$ is a prime divisor in~$X$ that is $B$-stable but not $G$-stable.
Let $\mathcal D_G(X)$ denote the (finite) set of all colors of~$X$.
The set $\mathcal D_G(X)$ is considered as an abstract set equipped with the map $\rho \colon \mathcal D_G(X) \to \Lambda_G(X)^\vee$ given by $D \mapsto \rho_{v_D}$.

Note that $\Lambda_G(X)$, $\Sigma_G(X)$, and $\mathcal D_G(X)$ depend only on the open $G$-orbit in~$X$.

The \textit{weight monoid} of~$X$ is $\Lambda^+_G(X) = \lbrace \lambda \in \Lambda^+_G \mid \CC[X]^{(B)}_\lambda \ne \lbrace 0 \rbrace \rbrace$.
Clearly, $\Lambda^+_G(X) \subset \Lambda_G(X)$.
The following result is also well known; for a proof see, for instance,~\cite[Proposition~5.14]{Tim}.

\begin{proposition} \label{prop_SM_WL_and_WM}
If $X$ is quasi-affine then $\Lambda_G(X) = \ZZ \Lambda^+_G(X)$.
\end{proposition}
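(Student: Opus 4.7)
The inclusion $\ZZ\Lambda^+_G(X) \subset \Lambda_G(X)$ is immediate, since $\CC[X]^{(B)}_\lambda \subset \CC(X)^{(B)}_\lambda$ makes $\Lambda^+_G(X) \subset \Lambda_G(X)$, and the right-hand side is a group. The plan is therefore to establish the reverse inclusion by showing that every $\lambda \in \Lambda_G(X)$ can be written as $\mu_1 - \mu_2$ with $\mu_1,\mu_2 \in \Lambda^+_G(X)$. Fix a nonzero $f_\lambda \in \CC(X)^{(B)}_\lambda$.

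The key use of the quasi-affine hypothesis is that $\CC(X)$ is the field of fractions of~$\CC[X]$. Indeed, embedding $X$ as an open subvariety of an affine variety~$\overline X$ yields $\CC(X) = \CC(\overline X) = \mathrm{Frac}(\CC[\overline X]) \subset \mathrm{Frac}(\CC[X])$, while the opposite inclusion is automatic. Consequently the subset
\[
I = \lbrace r \in \CC[X] \mid rf_\lambda \in \CC[X] \rbrace
\]
is nonzero. A short calculation using $B$-semiinvariance of $f_\lambda$ shows that $I$ is $B$-stable: for $r \in I$ and $b \in B$, the product $(b\cdot r)f_\lambda$ is a nonzero scalar multiple of $b\cdot(rf_\lambda) \in \CC[X]$, so $b\cdot r \in I$.

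I would then extract a $B$-semiinvariant from~$I$ by a standard local-finiteness argument: the $G$-action (hence the $B$-action) on~$\CC[X]$ is locally finite, so $I$ is the directed union of its finite-dimensional $B$-stable subspaces, and the Lie--Kolchin theorem produces a nonzero $B$-semiinvariant $q \in I \cap \CC[X]^{(B)}_\mu$ for some~$\mu$. By definition of $\Lambda^+_G(X)$ this already gives $\mu \in \Lambda^+_G(X)$, and since $qf_\lambda$ is a nonzero element of $\CC[X]^{(B)}_{\mu+\lambda}$ we also obtain $\mu + \lambda \in \Lambda^+_G(X)$. Hence $\lambda = (\mu + \lambda) - \mu \in \ZZ\Lambda^+_G(X)$, as required.

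The argument is largely formal once the two ingredients are in place: the clearing-of-denominators step, which is precisely where quasi-affineness enters, and the production of $B$-semiinvariants in $I$ via Lie--Kolchin. No genuine obstacle arises; the only point requiring slight care is that $I$ may be infinite-dimensional, which is handled by restricting to finite-dimensional $B$-stable subspaces before applying Lie--Kolchin.
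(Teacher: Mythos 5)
Your proof is correct. The paper gives no argument of its own here, referring instead to \cite[Proposition~5.14]{Tim}, and the proof there is exactly your denominator-ideal argument: clear denominators using $\CC(X)=\mathrm{Frac}(\CC[X])$ (where quasi-affineness enters), observe the ideal of denominators is $B$-stable, and extract a $B$-eigenfunction via local finiteness and Lie--Kolchin.
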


\subsection{Spherical modules}
\label{subsec_spherical_modules}

A finite-dimensional $G$-module $V$ is said to be \textit{spherical} if $V$ is spherical as a $G$-variety.

Let $V$ be a spherical $G$-module and for every $\lambda \in \Lambda^+_G(V)$ fix a nonzero element $f_\lambda$ in the (one-dimensional) subspace $\CC[X]^{(B)}_\lambda$.
Clearly, for all $\lambda,\mu \in \Lambda^+_G(V)$ the product $f_\lambda f_\mu$ is proportional to~$f_{\lambda+\mu}$.
Let $\mathcal D_B$ (resp.~$\ZZ^+\mathcal D_B$) denote the set of $B$-stable prime (resp. effective) divisors in~$V$.
A combination of \cite[Theorem~3.1]{PV} with the unique factorization property of~$\CC[V]$ yields the following result (compare with~\cite[Theorem~3.2]{Kn98}).

\begin{theorem} \label{thm_wm_of_sm}
The map $\Lambda^+_G(V) \to \ZZ^+\mathcal D_B$, $\lambda \mapsto \div f_\lambda$, is an isomorphism.
In particular, $\Lambda^+_G(V)$ is a free monoid of rank~$|\mathcal D_B|$.
\end{theorem}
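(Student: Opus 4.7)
The plan is to verify directly that the stated map is a well-defined injective monoid homomorphism whose image is all of $\ZZ^+\mathcal D_B$, and then to read off freeness from the fact that $\ZZ^+\mathcal D_B$ is obviously free on~$\mathcal D_B$.

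First I would check that the map is well defined and a homomorphism. Since $V$ is spherical, $\CC[V]$ is multiplicity free as a $G$-module, so $\dim \CC[V]^{(B)}_\lambda \le 1$ for every $\lambda \in \Lambda^+_G(V)$; hence $f_\lambda$ is determined up to a scalar and $\div f_\lambda$ is well defined. As already noted in the paragraph preceding the theorem, the product $f_\lambda f_\mu$ lies in $\CC[V]^{(B)}_{\lambda+\mu}$ and is therefore proportional to~$f_{\lambda+\mu}$, which yields $\div f_{\lambda+\mu} = \div f_\lambda + \div f_\mu$.

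Next, injectivity. Suppose $\div f_\lambda = \div f_\mu$. Then $f_\lambda/f_\mu$ is a nowhere-vanishing regular function on~$V$, but $V$ is an affine space, so $\CC[V]^\times = \CC^\times$; hence $f_\lambda = c f_\mu$ for some $c \in \CC^\times$. Comparing $B$-weights forces $\lambda = \mu$.

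The surjectivity step is where I would invoke \cite[Theorem~3.1]{PV} together with factoriality of $\CC[V]$. Given a prime $B$-stable divisor $D \subset V$, the ideal of~$D$ in the UFD $\CC[V]$ is generated by an irreducible polynomial~$g$, unique up to scalar; since $D$ is $B$-stable, the one-dimensional subspace $\CC g \subset \CC[V]$ is $B$-stable, so $g$ is a $B$-semi-invariant of some weight~$\lambda$. By sphericity $g$ must be a scalar multiple of~$f_\lambda$, and so $D = \div f_\lambda$. For an arbitrary element $\sum n_i D_i \in \ZZ^+\mathcal D_B$ with $D_i = \div f_{\lambda_i}$, the product $\prod f_{\lambda_i}^{n_i}$ is a $B$-semi-invariant of weight $\sum n_i \lambda_i$, hence proportional to $f_{\sum n_i \lambda_i}$, and therefore $\sum n_i D_i = \div f_{\sum n_i \lambda_i}$ lies in the image.

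Finally, since the map is a monoid isomorphism and $\ZZ^+\mathcal D_B$ is manifestly the free commutative monoid on the finite set $\mathcal D_B$, the monoid $\Lambda^+_G(V)$ is free of rank $|\mathcal D_B|$. The only nontrivial ingredient in the whole argument is the identification of prime $B$-stable divisors with principal ideals generated by $B$-semi-invariants, which is precisely the content of the cited result of Popov--Vinberg combined with the fact that $\CC[V]$ is a UFD; everything else amounts to bookkeeping with the one-dimensionality of the weight spaces~$\CC[V]^{(B)}_\lambda$.
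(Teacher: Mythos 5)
Your proof is correct and follows essentially the same route as the paper, which simply cites \cite[Theorem~3.1]{PV} together with factoriality of $\CC[V]$ — exactly the ingredient you isolate in your surjectivity step (a $B$-stable prime divisor is cut out by an irreducible $B$-semiinvariant, unique up to scalar). The only detail worth adding is that each prime component of $\div f_\lambda$ is itself $B$-stable (so the map really lands in $\ZZ^+\mathcal D_B$), which follows since $B$ is connected and permutes the finitely many components of the $B$-stable set $\div f_\lambda$.
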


\begin{remark} \label{rem_SM}
There is a complete classification of spherical modules due to~\cite{Kac,BR,Lea}.
The weight monoids for all classified cases were computed in~\cite{HU, Lea} (see also~\cite{Kn98} for a convenient presentation).
\end{remark}

Now suppose $V$ is equipped with a linear action of a bigger group
$\widetilde G \supset G$ such that $\widetilde G = AG$ for a finite subgroup $A \subset Z(\widetilde G)$ (so that $\widetilde G^0 = G$).
By abuse of terminology, in this situation we say that $V$ is a spherical $\widetilde G$-module.
Put $\widetilde B = AB$ and observe that every $B$-semiinvariant rational function on~$V$ is actually $\widetilde B$-semiinvariant; thus we get well-defined notions of the weight lattice $\Lambda_{\widetilde G}(V)$ and weight monoid $\Lambda^+_{\widetilde G}(V)$ (with respect to~$\widetilde B$).
This discussion implies also the following remark, which is stated separately for future reference.

\begin{remark} \label{rem_sm_B-stable}
Every $D \in \mathcal D_B$ is actually $\widetilde B$-stable.
In particular, Theorem~\ref{thm_wm_of_sm} remains valid for spherical $\widetilde G$-modules and $\Lambda^+_{\widetilde G}(V)$ is also free.
\end{remark}

\subsection{Linearized line bundles on homogeneous spaces}
\label{subsec_LLB}

Let $N$ be a connected linear group and consider a homogeneous space $X = N/H$.
Let $\pi \colon \mathcal L \to X$ be a line bundle on~$X$.

An \textit{$N$-linearization} of~$\mathcal L$ is an $N$-action on~$\mathcal L$ such that the map $\pi$ is equivariant and for every $g \in N$, $x \in X$ the map $\pi^{-1}(x) \to \pi^{-1}(gx)$ induced by the action of~$g$ is linear.
If $\mathcal L$ is equipped with an $N$-linearization then it is called an \textit{$N$-linearized line bundle} on~$X$.

According to~\cite[Theorem~4]{Pop}, the ($N$-equi\-variant isomorphism classes of) $N$-linearized line bundles on $N/H$ are in bijection with~$\mathfrak X(H)$.
For any $\chi \in \mathfrak X(H)$, let $\CC_\chi$ be a one-dimensional $H$-module on which $H$ acts via~$\chi$.
Then the $N$-linearized line bundle corresponding to~$\chi$ is
$N *_H \CC_\chi = (N \times \CC_\chi)/H$ where the quotient is taken for the action of~$H$ given by $(h,(g,v)) \mapsto (gh^{-1},\chi(h)v)$.
For every $g \in N$ and $v \in \CC_\chi$, let $[g,v]$ denote the image of the pair $(g,v)$ in $N *_H \CC_\chi$.
Then the projection $N *_H \CC_\chi \to N/H$ is defined by $[g,v] \mapsto gH$ and the $N$-linearization is given by left translations of the first component.
Note that the fiber of $N *_H \CC_\chi$ over $eH$ is~$\CC_\chi$ (as an $H$-module).
Finally, there is an $N$-module isomorphism
\begin{equation} \label{eqn_iso_sections}
\Gamma(N *_H \CC_\chi) \simeq \CC[N]^{(H)}_{-\chi}
\end{equation}
where the $H$-semiinvariants in the right-hand side are taken with respect to the action of~$H$ on the right.

The following result is implied by~\cite[Proposition~1 and Theorem~4]{Pop} (see also \cite[Ch.~VII, Proposition~1.5]{Ray} for a closely related result).

\begin{proposition} \label{prop_unique_linearization}
If $N$ is semisimple and simply connected then every line bundle on~$N/H$ admits a unique $N$-linearization.
\end{proposition}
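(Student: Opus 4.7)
The plan is to deduce both existence and uniqueness from the classical exact sequence
\[
\mathfrak X(N) \to \mathfrak X(H) \to \Pic(N/H) \to \Pic(N),
\]
provided by~\cite[Proposition~1]{Pop}, in which the middle arrow is precisely the map $\chi \mapsto [N *_H \CC_\chi]$ sending a character of~$H$ to the isomorphism class of the underlying line bundle of the $N$-linearized line bundle from~\cite[Theorem~4]{Pop}.

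First I would observe that since $N$ is semisimple it coincides with its derived subgroup, so $\mathfrak X(N) = 0$. Second, since $N$ is moreover simply connected, one has $\Pic(N) = 0$ by a classical result recorded in~\cite[\S\,1]{Pop}. Therefore the four-term sequence collapses to an isomorphism $\mathfrak X(H) \cong \Pic(N/H)$, which establishes existence: every line bundle on~$N/H$ is of the form $N *_H \CC_\chi$ for some $\chi \in \mathfrak X(H)$ and thus inherits an $N$-linearization from the construction.

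For uniqueness on a fixed line bundle~$\mathcal L$, I would invoke the elementary observation that the set of $N$-linearizations on~$\mathcal L$ is a torsor over~$\mathfrak X(N)$: given any two linearizations of~$\mathcal L$, their ratio is an $N$-parametrized family of fiberwise scalar automorphisms of~$\mathcal L$, i.e.\ a character $N \to \CC^\times$; conversely, multiplying a linearization by any character of~$N$ yields another one. Since $\mathfrak X(N) = 0$, this torsor is a single point and the linearization is unique.

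The main (and essentially only) obstacle is matching the forgetful map from isomorphism classes of $N$-linearized line bundles to isomorphism classes of line bundles with the connecting homomorphism $\mathfrak X(H) \to \Pic(N/H)$ in Popov's exact sequence; this identification is built into the very construction of $N *_H \CC_\chi$, so once the two inputs from~\cite{Pop} are cited correctly the proof reduces to a short formal deduction.
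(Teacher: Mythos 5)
Your argument is correct and is essentially the same as the paper's, which simply cites \cite[Proposition~1 and Theorem~4]{Pop}: you have just unwound that citation into the collapse of the exact sequence $\mathfrak X(N) \to \mathfrak X(H) \to \Pic(N/H) \to \Pic(N)$ using $\mathfrak X(N)=0$ and $\Pic(N)=0$, plus the torsor argument for uniqueness. The only point worth making explicit in the uniqueness step is that a fiberwise scalar automorphism of $\mathcal L$ is a priori an invertible regular function on $N/H$, and one uses that these are constants because $N$ is semisimple (so $\CC[N]^\times = \CC^\times$) before concluding that the ratio of two linearizations is a character of~$N$.
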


\subsection{The extended weight monoid}
\label{subsec_EWM}

Let $H \subset G$ be a subgroup.
Combining~(\ref{eqn_iso_sections}) with~\cite[Corollary~2.13]{Tim} we find that, for every $\lambda \in \Lambda^+_G$ and $\chi \in \mathfrak X(H)$,
\begin{equation} \label{eqn_3dims}
\dim \Gamma(G *_H \CC_{-\chi})^{(B)}_\lambda = \dim \CC[G]^{(B \times H)}_{(\lambda, \chi)} = \dim [V_G(\lambda)^*]^{(H)}_\chi
\end{equation}
where in the middle term $B$ (resp.~$H$) acts on $\CC[G]$ on the left (resp. on the right).
Let $m(\lambda,\chi)$ denote the value in~(\ref{eqn_3dims}).

By definition, the \textit{extended weight monoid} of~$G/H$ is
\[
\widehat \Lambda^+_G(G/H) = \lbrace (\lambda, \chi) \in \Lambda^+_G \times \mathfrak X(H) \mid m(\lambda,\chi) > 0 \rbrace.
\]

The next result is a particular case of~\cite[Theorem~1]{VK78}.

\begin{theorem}
The following conditions are equivalent:
\begin{enumerate}[label=\textup{(\arabic*)},ref=\textup{\arabic*}]
\item
$H$ is a spherical subgroup of~$G$.

\item
$m(\lambda, \chi) \le 1$ for all $\lambda \in \Lambda^+_G$ and $\chi \in \mathfrak X(H)$.
\end{enumerate}
\end{theorem}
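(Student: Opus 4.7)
The plan is to use the identifications in~(\ref{eqn_3dims}) to rephrase both conditions in terms of the action of $B \times H$ on~$G$ (where $B$ acts by left and $H$ by right translations). Sphericity of~$H$ is equivalent to the existence of an open $B \times H$-orbit on~$G$ (equivalently, an open $B$-orbit on~$G/H$), while condition~(2) amounts to saying that every semi-invariant weight space $\CC[G]^{(B \times H)}_{(\lambda,\chi)}$ is at most one-dimensional. Thus it suffices to show that these two assertions about the $B \times H$-action on~$G$ are equivalent.

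For the implication (1)$\Rightarrow$(2), I would let $\mathcal O$ denote the open $B \times H$-orbit on~$G$ and suppose $f_1, f_2 \in \CC[G]^{(B \times H)}_{(\lambda,\chi)}$ with $f_2 \ne 0$. The zero locus of~$f_2$ is $B \times H$-stable and proper, hence disjoint from~$\mathcal O$, so the ratio $f_1/f_2$ is a regular $B \times H$-invariant function on~$\mathcal O$. Being regular on a single orbit, it must be constant on~$\mathcal O$, and by density of~$\mathcal O$ in~$G$ the relation $f_1 = c f_2$ extends globally, yielding the required dimension bound.

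For the converse direction I would proceed by contrapositive. If $B \times H$ has no open orbit on~$G$, Rosenlicht's theorem provides a nonconstant $f \in \CC(G)^{B \times H}$, and the crux is to realize~$f$ as a ratio $f_1/f_2$ with $f_1, f_2 \in \CC[G]^{(B \times H)}_{(\lambda,\chi)}$ for some common weight $(\lambda, \chi) \in \Lambda^+_G \times \mathfrak X(H)$; the resulting linearly independent pair then witnesses $m(\lambda, \chi) \ge 2$. To produce such a decomposition, I would first reduce to the case where $G$ is semisimple and simply connected (as sketched in Remark~\ref{rem_relax}), so that $\CC[G]$ is a unique factorization domain and $\CC[G]^\times = \CC^\times$. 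Writing $f = f_1/f_2$ with coprime $f_1, f_2 \in \CC[G]$, the relation $(g \cdot f_1)\, f_2 = f_1\, (g \cdot f_2)$ for $g \in B \times H$ combined with unique factorization and triviality of units forces $g \cdot f_i = c(g)\, f_i$ for $i = 1,2$ with a common character $c \colon B \times H \to \CC^\times$. Since any left $B$-semi-invariant in $\CC[G]$ automatically has a dominant $B$-weight, one obtains the required $\lambda \in \Lambda^+_G$.

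The main obstacle is precisely this last step of converting a rational $B \times H$-invariant into a quotient of regular semi-invariants of the same weight. Because $B \times H$ is in general neither solvable nor reductive, no Lie--Kolchin-type argument applied directly to a $B \times H$-stable ideal of~$\CC[G]$ is available to yield a $(B \times H)$-semi-invariant regular function, so the proof genuinely relies on the factoriality of~$\CC[G]$ for simply connected~$G$ together with the standard isogeny reduction of Remark~\ref{rem_relax}.
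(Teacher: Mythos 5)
First, a remark on the comparison itself: the paper does not prove this theorem --- it is quoted verbatim as a particular case of \cite[Theorem~1]{VK78} --- so your argument is being measured against the classical proof rather than against anything in the text. Your direction (1)$\Rightarrow$(2) is correct and complete: it uses only that a nonzero $(B\times H)$-semi-invariant cannot vanish on the open $B\times H$-orbit of $G$, and needs neither factoriality of $\CC[G]$ nor any reduction.

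The converse direction, however, has a genuine gap in the reduction step. The isogeny of Remark~\ref{rem_relax} transfers information in the wrong direction for a contrapositive argument: it identifies the multiplicities $m(\lambda,\chi)$ for pairs in $\Lambda^+_G\times\mathfrak X(H)$ with a \emph{sub}family of those for $\widetilde G/\widetilde H$. From ``$H$ is not spherical'' you therefore produce two linearly independent elements of $\CC[\widetilde G]^{(\widetilde B\times\widetilde H)}_{(\lambda,\tilde\chi)}$, but nothing guarantees that $\lambda\in\Lambda^+_G$ and that $\tilde\chi$ descends to $H$, i.e.\ that $(\lambda,\tilde\chi)$ is a legitimate pair for $G$; condition (2) for $G$ is not yet contradicted. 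A second, smaller inaccuracy: the cover in Remark~\ref{rem_relax} is $\widetilde C\times\widetilde G^{ss}$, which is not semisimple when $G$ has positive-dimensional centre, and then $\CC[\widetilde G]^\times=\CC^\times\cdot\mathfrak X(\widetilde C)\ne\CC^\times$, so the unit in $g\cdot f_i=u(g)f_i$ is a priori a nonconstant invertible function (its character part must be killed separately, e.g.\ by discreteness of $\mathfrak X(\widetilde C)$ and connectedness of $\widetilde B$, finiteness of $\widetilde H/\widetilde H^0$). Both defects are repairable by a standard device: since your $f$ is pulled back from $G$, the finite central kernel $Z=\Ker(\widetilde G\to G)$, which lies in both $\widetilde B$ and $\widetilde H$, multiplies $f_1$ and $f_2$ by one and the same character $\theta$, of finite order $n$; the monomials $f_1^n, f_1^{n-1}f_2,\dots,f_2^n$ are then $Z$-invariant, descend to $G$, are $(B\times H)$-semi-invariant of a common biweight lying in $\Lambda^+_G\times\mathfrak X(H)$, and all of them are linearly independent because $f_1/f_2$ is nonconstant. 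With this patch (or by running the argument on the quasi-affine space $G/H_*$ with $H_*=\bigcap_{\chi\in\mathfrak X(H)}\Ker\chi$, which is essentially the route of \cite{VK78}), your proof closes.
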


Now assume that $H$ is spherical.
Then for every $(\lambda, \chi) \in \widehat \Lambda^+_G(G/H)$ there is a unique up to proportionality nonzero section $s_{\lambda,\chi} \in \Gamma(G *_H \CC_{-\chi})^{(B)}_\lambda$.
Let $\ZZ^+\mathcal D_G(G/H)$ denote the monoid of effective $B$-stable divisors on~$G/H$, which is freely generated by~$\mathcal D_G(G/H)$.
The following result is well known; see, for instance,~\cite[Lemma~6.2.2]{Lu01} or~\cite[Theorem~2]{AvdG}.

\begin{theorem} \label{thm_ewm_is_free}
Suppose that $G$ is semisimple and simply connected.
Then the map $d \colon \widehat \Lambda^+_G(G/H) \to \ZZ^+\mathcal D_G(G/H)$, $(\lambda,\chi) \mapsto \div s_{\lambda,\chi}$, is an isomorphism.
In particular, $\widehat \Lambda^+_G(G/H)$ is a free monoid of rank~$|\mathcal D_G(G/H)|$.
\end{theorem}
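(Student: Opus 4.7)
The plan is to prove the theorem by constructing an inverse to $d$, using the fact that $B$-stable effective divisors on $G/H$ are precisely divisors of $B$-semi-invariant sections of $G$-linearized line bundles, which by Proposition~\ref{prop_unique_linearization} are in bijection with $\mathfrak X(H)$.

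First I would verify that $d$ is a monoid homomorphism. Given $(\lambda_1,\chi_1),(\lambda_2,\chi_2) \in \widehat\Lambda^+_G(G/H)$, the pointwise product $s_{\lambda_1,\chi_1}\cdot s_{\lambda_2,\chi_2}$ is a nonzero global section of $(G*_H\CC_{-\chi_1})\otimes (G*_H\CC_{-\chi_2}) \cong G*_H\CC_{-(\chi_1+\chi_2)}$ that is $B$-semi-invariant of weight $\lambda_1+\lambda_2$; by uniqueness of such a section (sphericity of $G/H$), it is proportional to $s_{\lambda_1+\lambda_2,\chi_1+\chi_2}$. Taking divisors yields additivity of~$d$, and $d(0,0)=0$ is clear.

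Next I would prove injectivity. Suppose $d(\lambda_1,\chi_1)=d(\lambda_2,\chi_2)$. Then $s_{\lambda_1,\chi_1}/s_{\lambda_2,\chi_2}$ is a nowhere-zero rational section of the line bundle $G*_H\CC_{-(\chi_1-\chi_2)}$, hence a regular trivialization; in particular this bundle is trivial as a $G$-linearized bundle, so $\chi_1=\chi_2$ by Proposition~\ref{prop_unique_linearization} (the bijection between $\mathfrak X(H)$ and $G$-linearized line bundles). Then $s_{\lambda_1,\chi_1}/s_{\lambda_2,\chi_2}\in \CC[G/H]^{(B)}_{\lambda_1-\lambda_2}$ is a unit, and as any $B$-semi-invariant unit on the spherical $G/H$ must have weight zero and be a scalar, we get $\lambda_1=\lambda_2$.

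The main step is surjectivity: I would show that every $D\in\mathcal D_G(G/H)$ arises as $d(\lambda,\chi)$ for suitable $(\lambda,\chi)$, whence by additivity every element of $\ZZ^+\mathcal D_G(G/H)$ is in the image. Consider the line bundle $\mathcal O_{G/H}(D)$ with its canonical section $s_D$ satisfying $\div s_D = D$. Since $G$ is semisimple and simply connected, Proposition~\ref{prop_unique_linearization} provides a unique $G$-linearization of $\mathcal O_{G/H}(D)$, so $\mathcal O_{G/H}(D)\cong G*_H\CC_{-\chi}$ for a unique $\chi\in\mathfrak X(H)$. Because $D$ is $B$-stable, for each $b\in B$ the translate $b\cdot s_D$ is a section with divisor $b\cdot D=D$, hence proportional to $s_D$; thus $s_D$ is $B$-semi-invariant, say of weight $\lambda$. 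Standard representation-theoretic arguments using the isomorphism $\Gamma(G*_H\CC_{-\chi})\simeq \CC[G]^{(H)}_{-\chi}$ from~\eqref{eqn_iso_sections} show that such a weight $\lambda$ lies in $\Lambda^+_G$. Hence $s_D$ is proportional to $s_{\lambda,\chi}$, giving $d(\lambda,\chi)=D$.

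Combining the three steps, $d$ is an isomorphism of monoids, and since $\ZZ^+\mathcal D_G(G/H)$ is free of rank $|\mathcal D_G(G/H)|$, the same holds for $\widehat\Lambda^+_G(G/H)$. The main subtlety I expect is the bookkeeping in surjectivity: identifying $\mathcal O_{G/H}(D)$ with the correct $G*_H\CC_{-\chi}$ and justifying that the weight of $s_D$ is dominant; both are handled by invoking Proposition~\ref{prop_unique_linearization} together with~\eqref{eqn_iso_sections}.
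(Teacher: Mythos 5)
Your proof is correct and is essentially the standard argument: the paper itself does not prove Theorem~\ref{thm_ewm_is_free} but refers to \cite[Lemma~6.2.2]{Lu01} and \cite[Theorem~2]{AvdG}, and the proofs there proceed exactly as you do, via the isomorphism $\mathfrak X(H) \simeq \Pic(G/H)$ coming from Proposition~\ref{prop_unique_linearization} together with the identification of $B$-semiinvariant sections with $B$-stable effective divisors. The only point worth making explicit is that every $B$-stable prime divisor of $G/H$ is a color (there are no $G$-stable divisors on a homogeneous space), so $d$ indeed lands in $\ZZ^+\mathcal D_G(G/H)$; the rest (units of $\CC[G/H]$ being constants since $G$ is semisimple, dominance of the $B$-weight of $s_D$ via~\eqref{eqn_iso_sections}) is handled adequately.
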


For every $D \in \mathcal D_G(G/H)$, we put $d^{-1}(D) = (\lambda_D,\chi_D)$.
Then the map $D \mapsto (\lambda_D,\chi_D)$ is a bijection between $\mathcal D_G(G/H)$ and the indecomposable elements of $\Lambda^+_G(G/H)$.

\begin{remark} \label{rem_relax}
The conditions on~$G$ imposed in Theorem~\ref{thm_ewm_is_free} are not very restrictive.
Indeed, for a general connected reductive group~$G$ there exists a finite covering $\varphi \colon \widetilde G \to G$ such that $\widetilde G = \widetilde C \times \widetilde G^{ss}$ where $\widetilde C$ is a torus and $\widetilde G^{ss}$ is a simply connected semisimple group.
Then for $\widetilde H = \varphi^{-1}(H)$ one has
\[
\widehat \Lambda^+_G(G/H) = \lbrace (\lambda, \chi) \in \widehat \Lambda^+_{\widetilde G}(\widetilde G/\widetilde H) \mid \lambda \in \Lambda^+_G \rbrace.
\]
In turn, if $\widetilde H^{ss}$ denotes the projection of~$\widetilde H$ to $\widetilde G^{ss}$, then the monoid $\widehat \Lambda^+_{\widetilde G}(\widetilde G/\widetilde H)$ is easily expressed from $\widehat \Lambda^+_{\widetilde G}(\widetilde G/\widetilde H^{ss})$ as described in~\cite[Proposition~2.3]{Avd_solv_inv}.
\end{remark}

\subsection{The regular embedding of a subgroup}
\label{subsec_regular_embedding}

It follows from~\cite[\S\,30.3]{Hum} that for every subgroup $H \subset G$ there exists a parabolic subgroup $P \subset G$ such that $H \subset P$ and $H_u \subset P_u$.
In this situation, we say that $H$ is \textit{regularly embedded} in~$P$.
Clearly, one can choose Levi subgroups $K \subset H$ and $L \subset P$ such that $K \subset L$, in which case we get the following diagram of inclusions:
\begin{equation} \label{eqn_regular_embedding}
\begin{array}{ccccc}
P & = & L & \rightthreetimes & P_u \\
\cup &  & \cup & & \cup\, \\
H & = & K & \rightthreetimes & H_u
\end{array}
\end{equation}

Replacing $H$ with a conjugate subgroup in~$G$ we may assume the following properties:
\begin{enumerate}[label=\textup{(P\arabic*)},ref=\textup{P\arabic*}]
\item \label{P1}
$P \supset B^-$;

\item \label{P2}
$L$ is the standard Levi subgroup of~$P$.
\end{enumerate}
Put $B_L = B \cap L$; this is a Borel subgroup of~$L$.

Thanks to the local structure theorem~\cite[Theorem~2.3, Corollary~2.4]{Kn90} (see also~\cite[Theorem~4.7]{Tim}), there exist a parabolic subgroup $Q_L \supset B_L$ of~$L$ with standard Levi subgroup~$M$ and a point in general position $x$ for the action of~$Q_L$ on $L/K$ such that the group $\widetilde S = (Q_L)_x$ satisfies
\begin{equation} \label{eqn_M'SM}
M' \subset \widetilde S \subset M
\end{equation}
(in particular, $\widetilde S$ is reductive).
We note that the group $(B_L)_x = B_L \cap \widetilde S$ satisfies $(B_L)_x = Z(\widetilde S)(B_L)_x^0$ and $(B_L)_x^0$ is a Borel subgroup of~$\widetilde S$.

Clearly, $K$ contains a subgroup $S$ conjugate to $\widetilde S$.
According to~\cite[Corollary~8.2]{Kn90} or~\cite[Theorem~3(ii)]{Pa90}, $S$ can be characterized as a stabilizer in general position for the natural action of~$K$ on $\mathfrak l / \mathfrak k$.
We note that the subgroup $S$ may be disconnected; however, property~(\ref{eqn_M'SM}) guarantees that $S = Z(S) \cdot (S^0)'$.
Then the notions of a spherical $S$-module and its weight lattice/monoid extend as explained in~\S\,\ref{subsec_spherical_modules}.

In the following theorem, the equivalence of~(\ref{thm_criterion_spherical_1}) and~(\ref{thm_criterion_spherical_2}) was proved in~\cite[Proposition~I.1]{Br87} and that of~(\ref{thm_criterion_spherical_2}) and~(\ref{thm_criterion_spherical_3}) in~\cite[Theorem~1.2]{Pa94}; see also~\cite[Theorem~9.4]{Tim}.

\begin{theorem} \label{thm_criterion_spherical}
In the setting of~\textup{(\ref{eqn_regular_embedding})}, the following conditions are equivalent:
\begin{enumerate}[label=\textup{(\arabic*)},ref=\textup{\arabic*}]
\item \label{thm_criterion_spherical_1}
$G/H$ is a spherical $G$-variety.

\item \label{thm_criterion_spherical_2}
$P/H$ is a spherical $L$-variety.

\item \label{thm_criterion_spherical_3}
$L/K$ is a spherical $L$-variety and $\mathfrak p_u/\mathfrak h_u$ is a spherical $S$-module.
\end{enumerate}
\end{theorem}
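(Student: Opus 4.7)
The plan is to establish the two equivalences $(1) \Leftrightarrow (2)$ and $(2) \Leftrightarrow (3)$ by two fibration arguments, using the $G$-equivariant projection $G/H \to G/P$ for the first and the $L$-equivariant projection $P/H \to L/K$ for the second.

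For $(1) \Leftrightarrow (2)$: let $\pi \colon G/H \to G/P$ be the natural projection. Since $P \supset B^-$, the product $BP$ contains the big Bruhat cell $B B^-$ and is therefore open in~$G$; hence $BP/P$ is the open $B$-orbit in $G/P$, with stabilizer $B \cap P = B_L$. Consequently the restriction of $\pi$ over this open orbit identifies $\pi^{-1}(BP/P)$ with the associated bundle $B \times^{B_L}(P/H)$, so the $B$-orbits on this open subset of $G/H$ are in natural bijection with the $B_L$-orbits on $P/H$. Since $B_L$ is a Borel subgroup of~$L$ and $P/H$ is irreducible and normal as a homogeneous space of the connected group~$P$, this yields $G/H$ spherical if and only if $P/H$ is $L$-spherical.

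For $(2) \Leftrightarrow (3)$: the quotient homomorphism $P \twoheadrightarrow L = P/P_u$ carries $H$ onto $K$ with kernel $H \cap P_u = H_u$ (using that $K$ is a reductive Levi of $H$ and $P_u$ is unipotent), and hence descends to an $L$-equivariant surjection $q \colon P/H \twoheadrightarrow L/K$ whose fiber over $eK$ is $P_u/H_u$ as a $K$-variety, giving the global identification $P/H \simeq L \times^K (P_u/H_u)$. It follows that $P/H$ has an open $B_L$-orbit if and only if both \textup{(i)} $L/K$ has an open $B_L$-orbit, and \textup{(ii)} the $B_L$-stabilizer of a point of that open orbit has an open orbit on the fiber $P_u/H_u$. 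Condition \textup{(i)} is exactly the sphericity of $L/K$ as an $L$-variety. For condition \textup{(ii)}, the local structure theorem applied to $L/K$ (in the form recalled before~\textup{(\ref{eqn_M'SM})}) identifies the said stabilizer, up to its finite component group $Z(\widetilde S)$ and up to $K$-conjugation, with a Borel subgroup of~$S$; hence \textup{(ii)} amounts to $S$-sphericity of $P_u/H_u$. Finally, a $K$-equivariant exponential map (or, more generally, any $K$-equivariant isomorphism between a unipotent group with reductive action and its Lie algebra) produces an isomorphism of $K$-varieties $P_u/H_u \simeq \mathfrak p_u/\mathfrak h_u$, and for a linear action on a vector space sphericity as a variety is the same as sphericity as a module.

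The main difficulty I expect to encounter is step \textup{(ii)} above: the rigorous identification, via the local structure theorem, of the generic $B_L$-stabilizer on $L/K$ with (the identity component of) $S$ up to $K$-conjugacy. This is precisely the point at which the full setup from~\textup{(\ref{eqn_M'SM})} together with the characterization of $S$ as a stabilizer in general position for the $K$-action on $\mathfrak l/\mathfrak k$ becomes indispensable. The remaining pieces---namely the two fibration bijections of orbits and the transfer from $P_u/H_u$ to its tangent space---are routine once the correct formulation of the open-orbit condition on the associated bundle $L \times^K (P_u/H_u)$ is in place.
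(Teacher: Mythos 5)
Your argument is correct and is essentially the standard one: the paper itself does not prove Theorem~\ref{thm_criterion_spherical} but cites \cite[Proposition~I.1]{Br87} for (\ref{thm_criterion_spherical_1})$\Leftrightarrow$(\ref{thm_criterion_spherical_2}) and \cite[Theorem~1.2]{Pa94} (see also \cite[Theorem~9.4]{Tim}) for (\ref{thm_criterion_spherical_2})$\Leftrightarrow$(\ref{thm_criterion_spherical_3}), and your two fibration reductions $G/H \to G/P$ and $P/H \to L/K$, combined with the local structure theorem to identify the generic $B_L$-stabilizer on $L/K$ with $B_S = Z(\widetilde S)B_S^0$, follow exactly the strategy of those references. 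The only imprecision is your justification of $P_u/H_u \simeq \mathfrak p_u/\mathfrak h_u$: the exponential map $\mathfrak p_u \to P_u$ is $K$-equivariant but does not carry affine cosets $v + \mathfrak h_u$ to cosets $\exp(v)H_u$ when $P_u$ is nonabelian, so this identification of \emph{quotients} is not automatic; it is precisely \cite[Lemma~1.4]{Mon}, recorded in the paper as Remark~\ref{rem_Montagard}, and you should invoke that statement rather than the bare exponential.
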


\begin{remark} \label{rem_L/K}
It is easy to see that the subgroup $K$ is spherical in~$L$ if and only if so is~$CK$.
Since $L'$ acts transitively on~$L/CK$, it follows that $L/K$ is spherical as an $L$-variety if and only if $L'/(L' \cap CK)$ is spherical as an $L'$-variety.
\end{remark}

\begin{remark}
In fact, for spherical $L/K$ the local structure theorem mentioned above enables one to determine the isomorphism type of $S$ directly from~$\Lambda_L(L/K)$.
Namely, $M$ is given by $\Pi_M = \lbrace \alpha \in \Pi \mid (\alpha, \lambda) = 0 \ \text{for all} \ \lambda \in \Lambda_L(L/K) \rbrace$ and $\widetilde S$ is determined by $\widetilde S \cap T = \bigcap \limits_{\lambda \in \Lambda_L(L/K)} \Ker \lambda$.
For all affine spherical homogeneous spaces in the lists of \cite{Kr, Mi, Br87} (see the Introduction) the Lie algebras of corresponding subgroups~$S$ can be found, for instance, in~\cite{KnVS}.
\end{remark}

\begin{remark} \label{rem_Montagard}
By~\cite[Lemma~1.4]{Mon}, there is a $K$-equivariant (and hence $S$-equivariant) isomorphism $P_u/H_u \simeq \mathfrak p_u/\mathfrak h_u$.
\end{remark}

\subsection{The weight lattice of a spherical homogeneous space}
\label{subsec_weight_lattice}

Retain the notation of~\S\,\ref{subsec_regular_embedding} and suppose that $H \subset G$ is a spherical subgroup.
Put $B_S = B_L \cap K$.

Replacing $H$ with a conjugate subgroup, we may assume the following properties:
\begin{enumerate}[label=\textup{(S\arabic*)},ref=\textup{S\arabic*}]
\item \label{S1}
$B_LK$ is open in~$L$ (which is equivalent to $\mathfrak l = \mathfrak b_L + \mathfrak k$);

\item \label{S2}
$T \cap B_S$ is a Levi subgroup of~$B_S$.
\end{enumerate}
According to the discussion in~\S\,\ref{subsec_regular_embedding}, the subgroup $S \subset K$ may be chosen to have the following additional properties:
\begin{enumerate}[label=\textup{(S\arabic*)},ref=\textup{S\arabic*}]
\setcounter{enumi}{2}
\item \label{S3}
$B_S \subset S$;

\item \label{S4}
$B_S^0$ is a Borel subgroup of~$S$;

\item \label{S5}
$S = Z(S)S^0$;

\item \label{S6}
$B_S = Z(S)B_S^0$.
\end{enumerate}

\begin{lemma} \label{lemma_S_is_unique}
Properties \textup(\ref{S3}\textup)--\textup(\ref{S6}\textup) uniquely determine~$S$ among all reductive subgroups of~$K$.
\end{lemma}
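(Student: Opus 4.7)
The plan is to show that any two reductive subgroups $S, S_1 \subset K$ satisfying \textup{(\ref{S3})}--\textup{(\ref{S6})} must coincide, by first reducing the problem to an equality of identity components and then matching their root data with respect to a common maximal torus.

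For the reduction, observe that \textup{(\ref{S6})} applied to $S_1$ yields $Z(S_1) \subset B_S$, which by \textup{(\ref{S3})} applied to $S$ is contained in $S$; combined with \textup{(\ref{S5})}, this gives $S_1 = Z(S_1) \cdot S_1^0 \subset S \cdot S_1^0$, and symmetrically $S \subset S_1 \cdot S^0$. Hence it suffices to prove $S^0 = S_1^0$.

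By \textup{(\ref{S4})} the subgroup $B_S^0$ is a Borel of both $S^0$ and $S_1^0$, and by \textup{(\ref{S2})} the group $T_S := (T \cap B_S)^0$ is a Levi of $B_S^0$, hence a common maximal torus. The nilpotent radical $\mathfrak n_S := [\mathfrak b_S, \mathfrak b_S]$ depends only on $\mathfrak b_S$, and its non-zero $T_S$-weight spaces are precisely the $1$-dimensional positive root spaces of both $\mathfrak s$ and $\mathfrak s_1$; thus the positive root data agree. It remains to match the negative root spaces: for each positive root $\alpha$, one must show $\mathfrak s_{-\alpha}$ and $(\mathfrak s_1)_{-\alpha}$ coincide as lines in the $T_S$-weight subspace $\mathfrak k_{-\alpha} \subset \mathfrak k$. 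Both lines are characterized by the property that, together with a fixed generator $e_\alpha$ of $(\mathfrak n_S)_\alpha$, they yield an $\mathfrak{sl}_2$-triple whose semisimple element lies in $\mathfrak t_S$.

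The main obstacle is this last step in the case $\dim \mathfrak k_{-\alpha} > 1$, since the $\mathfrak{sl}_2$-triple condition alone does not pin down a unique line in $\mathfrak k_{-\alpha}$. To close the argument, I would invoke the intrinsic characterization of $S^0$ as the stabilizer in general position for the action of $K^0$ on $\mathfrak l/\mathfrak k$ recalled in \S\,\ref{subsec_regular_embedding}, together with the rigidity afforded by the containment $B_S^0 \subset S^0 \cap S_1^0$: the residual conjugation freedom must normalize $B_S^0$, and analyzing this normalizer within $K^0$ forces the two candidate lines to agree. Alternatively, using the reductive structure of $\mathfrak k$ and non-degeneracy of the Killing pairing between $\mathfrak k_\alpha$ and $\mathfrak k_{-\alpha}$, one can hope to characterize $\mathfrak s_{-\alpha}$ as a uniquely determined complement to $\ker(\mathrm{ad}(e_\alpha)|_{\mathfrak k_{-\alpha}})$ inside a well-chosen reductive subalgebra generated by $\mathfrak b_S$ and the candidate line.
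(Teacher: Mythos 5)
Your reduction to the identity components is sound and matches the paper's (each component of $S_1$ meets $B_S$ by (\ref{S6}), and $Z(S_1)\subset B_S\subset S$ by (\ref{S3})), but the core of your argument has a genuine gap, which you yourself flag: after matching the positive root spaces inside $[\mathfrak b_S,\mathfrak b_S]$, you cannot pin down the negative root spaces when $\dim\mathfrak k_{-\alpha}>1$, and neither of your two proposed repairs is carried out. The first repair is moreover not available as stated: the lemma quantifies over \emph{all} reductive subgroups of $K$ satisfying (\ref{S3})--(\ref{S6}), so you are not entitled to assume that the second subgroup $S_1$ is also a stabilizer in general position for the $K$-action on $\mathfrak l/\mathfrak k$; and even granting conjugacy $S_1^0=kS^0k^{-1}$, the observation that the residual freedom normalizes $B_S^0$ does not by itself force it to normalize $S^0$. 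The second repair (Killing-form complements to $\ker(\operatorname{ad}(e_\alpha)|_{\mathfrak k_{-\alpha}})$) is only a hope, and it is not clear it selects a line at all, let alone the right one.

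The paper closes exactly this gap by a different and much softer argument, avoiding root data entirely: it suffices to show that $S^0$ is the unique \emph{connected} reductive subgroup of $K$ having $B_S^0$ as a Borel subgroup. Since $S^0$ is reductive, $X=K/S^0$ is affine (Matsushima's criterion), and $B_S^0$ fixes the base point $x=eS^0$. If $R\subset K$ is another connected reductive subgroup with Borel subgroup $B_S^0$, then $R_x\supset B_S^0$ is parabolic in $R$, so the orbit $Rx\simeq R/R_x$ is projective; a projective subvariety of an affine variety is a point, whence $Rx=\{x\}$ and $R\subset S^0$. Exchanging the roles of $R$ and $S^0$ gives equality. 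You would need to replace your weight-space analysis by an argument of this kind (or genuinely complete one of your two sketched alternatives) for the proof to stand.
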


\begin{proof}
Since each connected component of~$S$ meets~$B_S$, it suffices to show that $S^0$ is the unique connected reductive subgroup of~$K$ satisfying~(\ref{S4}).
The homogeneous space $X = K/S^0$ is affine by \cite[Theorem~3.8]{Tim} and $B_S^0$ stabilizes the point $x = eS^0 \in X$.
If $B_S^0$ is a Borel subgroup of another connected reductive subgroup $R \subset K$ then the orbit $Rx \subset K$ is projective, which immediately implies $Rx =\lbrace x \rbrace$ and thus $R \subset S^0$.
Changing the roles of $S^0$ and $R$ in this argument yields the reverse inclusion, whence $R = S^0$.
\end{proof}

The proofs of Theorem~\ref{thm_criterion_spherical} in \cite[Proposition~I.1]{Br87},~\cite[Theorem~1.2]{Pa94}, or~\cite[Theorem~9.4]{Tim} actually imply the following result.

\begin{theorem} \label{thm_weight_lattice}
Suppose that $K$ and $S$ satisfy~\textup(\ref{S1}\textup)--\textup(\ref{S6}\textup) and
let $\iota \colon \mathfrak X(T) \to \mathfrak X(T \cap B_S)$ be the character restriction map.
Then
\[
\Ker \iota = \Lambda_L(L/K) \quad \text{and} \quad \Lambda_G(G/H) = \iota^{-1}(\Lambda_S(\mathfrak p_u/\mathfrak h_u))
\]
where the lattices $\Lambda_L(L/K)$ and $\Lambda_S(\mathfrak p_u/\mathfrak h_u)$ are taken with respect to $B_L$ and~$B_S$, respectively.
\end{theorem}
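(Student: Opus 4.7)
The plan is to mirror, at the level of rational semi-invariants, the two-step reduction that drives the proof of Theorem~\ref{thm_criterion_spherical}: $G/H \rightsquigarrow P/H \rightsquigarrow \mathfrak p_u/\mathfrak h_u$. Each step will produce a bijection of spaces of rational semi-invariants, and together they will pin down $\Lambda_G(G/H)$.

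For the first step I would use property~(\ref{P1}) to obtain $P_u^- \subset U$ and the product decomposition $B = B_L \cdot P_u^-$, so that the big cell $P_u^- \cdot P$ is open in~$G$ and consequently $P_u^- P/H$ is open and $B$-stable in $G/H$. Under the multiplication isomorphism $P_u^- \times P/H \xrightarrow{\sim} P_u^- P/H$ the $B$-action splits into left translation by $P_u^-$ on the first factor together with a twisted action by $B_L$ that conjugates the first factor and left-translates the second. Since any character of~$B$ is trivial on $P_u^-$, evaluation at $e$ in the first factor yields a bijection
\[
\mathbb C(G/H)^{(B)}_\lambda \xrightarrow{\sim} \mathbb C(P/H)^{(B_L)}_\lambda,
\]
with inverse $g \mapsto ((n,pH) \mapsto g(pH))$; in particular $\Lambda_G(G/H) = \Lambda_L(P/H)$.

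For the second step I would combine the fibration $P/H \to L/K$ (with fiber $P_u/H_u \cong \mathfrak p_u/\mathfrak h_u$ as a $K$-variety by Remark~\ref{rem_Montagard}) with property~(\ref{S1}), which identifies the open $B_L$-orbit in $L/K$ with $B_L/B_S$. Its preimage in $P/H$ is the $B_L$-equivariant associated bundle $B_L \times^{B_S} (\mathfrak p_u/\mathfrak h_u)$, and a standard descent computation restricting a $B_L$-semi-invariant function of weight $\lambda$ to the distinguished fiber over $eB_S$ gives a bijection $\mathbb C(P/H)^{(B_L)}_\lambda \cong \mathbb C(\mathfrak p_u/\mathfrak h_u)^{(B_S)}_{\iota(\lambda)}$; conversely, any $B_S$-semi-invariant of weight $\iota(\lambda)$ extends to a $B_L$-semi-invariant on the bundle. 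Combined with the first step, this delivers $\Lambda_G(G/H) = \iota^{-1}(\Lambda_S(\mathfrak p_u/\mathfrak h_u))$. The identity $\Ker \iota = \Lambda_L(L/K)$ is then the same argument with trivial fiber: a rational $B_L$-semi-invariant on $L/K$ of weight $\lambda$ is determined by its value at~$eK$, where it singles out a character of $B_L$ trivial on the isotropy $B_S$; by~(\ref{S2}) such a character is exactly one restricting trivially to the Levi $T \cap B_S$ of $B_S$, i.e., an element of $\Ker \iota$.

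The main technical obstacle is tracking equivariance through these two changes of variables. I expect the delicate points to be: verifying that the conjugation part of the $B_L$-action on $P_u^- \times P/H$ contributes nothing to the $\lambda$-weight (which is precisely where characters vanishing on the unipotent part is used), and checking that the hypotheses~(\ref{S3})--(\ref{S6}) make $B_S$-semi-invariant rational functions on the spherical $S$-module $\mathfrak p_u/\mathfrak h_u$ indexed exactly by $\Lambda_S(\mathfrak p_u/\mathfrak h_u)$ in the sense of Remark~\ref{rem_sm_B-stable}, so that no ``extra'' weights are picked up.
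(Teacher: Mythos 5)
Your proposal is correct and follows essentially the same route as the paper, whose proof of Theorem~\ref{thm_weight_lattice} consists precisely in observing that the two-step reduction $G/H \rightsquigarrow P/H \rightsquigarrow \mathfrak p_u/\mathfrak h_u$ carried out in the cited proofs of Theorem~\ref{thm_criterion_spherical} (Brion, Panyushev, Timashev) already tracks $B$-semiinvariant rational functions and hence yields the stated description of the weight lattice. Your explicit descent computations through the open cell $P_u^-\times P/H$ and the associated bundle $B_L\times^{B_S}(P_u/H_u)$ are exactly the content of those arguments.
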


\section{Statement of the main results}

\label{sect_main_results}

Throughout this subsection we assume that $G$ is semisimple and simply connected.
Let $H \subset G$ be a spherical subgroup regularly embedded in a parabolic subgroup $P \subset G$.
Let $L, K, B_L, S$ be as in \S\,\ref{subsec_regular_embedding} and assume properties~(\ref{eqn_regular_embedding}), (\ref{P1}), (\ref{P2}).
We identify the groups $\mathfrak X(H)$ and~$\mathfrak X(K)$ via the restriction of characters.

For brevity, starting from this subsection and till the end of the paper we shall write $\Lambda$, $\Sigma$, $\mathcal D$, $\widehat \Lambda^+$ instead of $\Lambda_G(G/H)$, $\Sigma_G(G/H)$, $\mathcal D_G(G/H)$, $\widehat \Lambda^+_G(G/H)$, respectively.

Our main goal in this subsection is to provide a description of the monoid~$\widehat \Lambda^+$.
We know from Theorem~\ref{thm_ewm_is_free} that $\widehat \Lambda^+$ is free, therefore it suffices to determine its rank and describe its indecomposable elements.

Combining Theorem~\ref{thm_criterion_spherical} with Remark~\ref{rem_L/K}, we find that $L'/(L' \cap CK)$ is an affine spherical homogeneous space and $\mathfrak p_u/\mathfrak h_u$ is a spherical $S$-module.
Thanks to our assumptions on~$G$, the group $L'$ is semisimple and simply connected, therefore $\widehat \Lambda^+_{L'}(L'/(L'\cap CK))$ is free by Theorem~\ref{thm_ewm_is_free}.
Recall from Theorem~\ref{thm_wm_of_sm} and Remark~\ref{rem_sm_B-stable} that the monoid $\Lambda^+_S(\mathfrak p_u/\mathfrak h_u)$ is also free.

\begin{theorem} \label{thm_rank}
The following equality holds:
\begin{equation} \label{eqn_rank}
\rk \widehat \Lambda^+ = |\Pi \setminus \Pi_L| + \rk \widehat \Lambda^+_{L'}(L'/(L'\cap CK)) + \rk \Lambda^+_S(\mathfrak p_u / \mathfrak h_u).
\end{equation}
\end{theorem}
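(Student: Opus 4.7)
The plan is to compute $\rk \widehat\Lambda^+$ by counting colors. By Theorem~\ref{thm_ewm_is_free}, $\rk \widehat\Lambda^+ = |\mathcal D|$, and by the discussion in the introduction each color of $G/H$ belongs to exactly one of the three types~(\ref{type1})--(\ref{type3}) arising from the chain of fibrations $G/H \to G/KP_u \to G/P$. Thus it suffices to show that these three types contribute the three summands on the right-hand side of~(\ref{eqn_rank}).

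First, colors of type~(\ref{type1}) are pullbacks from $G/P$. Since $P \supset B^-$ has standard Levi~$L$ by properties~(\ref{P1}) and~(\ref{P2}), the colors of $G/P$ are the Schubert divisors and are classically in bijection with $\Pi \setminus \Pi_L$, which produces the first summand.

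For type~(\ref{type2}), I invoke the bijection mentioned in the introduction between these colors and $\mathcal D_L(L/K)$, coming from restriction to the fiber $P/KP_u \cong L/K$ of the second fibration. It remains to match $|\mathcal D_L(L/K)|$ with the second summand. Writing $C = Z(L)^0$ so that $L = CL'$, consider the $L$-equivariant projection $\pi\colon L/K \to L/CK$ with torus fibers $C/(C \cap K)$. Since $C$ is connected, central, and acts transitively on these fibers, every $B_L$-stable prime divisor of $L/K$ is $C$-invariant and therefore pulled back from $L/CK$. Combining this with the identification $L/CK = L'/(L' \cap CK)$ from Remark~\ref{rem_L/K} and the equality $B_L = CB_{L'}$ (so that $B_L$-stability on $L/CK$ coincides with $B_{L'}$-stability), one obtains a bijection $\mathcal D_L(L/K) \leftrightarrow \mathcal D_{L'}(L'/(L'\cap CK))$. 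Since $L'$ is semisimple and simply connected, Theorem~\ref{thm_ewm_is_free} converts this count into $\rk \widehat\Lambda^+_{L'}(L'/(L' \cap CK))$.

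Finally, type-(\ref{type3}) colors are exactly those dominating $G/KP_u$, so restricting to a generic fiber of $G/H \to G/KP_u$ gives a map from them to $B$-stable prime divisors of that fiber. The fiber is $KP_u/H \cong P_u/H_u$, which is $K$-equivariantly isomorphic to the spherical $S$-module $\mathfrak p_u/\mathfrak h_u$ by Remark~\ref{rem_Montagard}. The main obstacle is to upgrade this restriction to a bijection with $\mathcal D_{B_S}(\mathfrak p_u/\mathfrak h_u)$. The plan is to apply the local structure theorem recalled in \S\,\ref{subsec_regular_embedding} to exhibit a $B$-stable open piece of $G/H$ of the form (open subset of $G/KP_u$)$\,\times\,\mathfrak p_u/\mathfrak h_u$ on which $B$ acts through a Borel subgroup of~$S$ on the second factor; properties~(\ref{S3})--(\ref{S6}) are designed precisely so that $B$-stable prime divisors meeting this open set but not pulled back from the base correspond bijectively to $B_S$-stable prime divisors of $\mathfrak p_u/\mathfrak h_u$, with the disconnected centre of~$S$ introducing no spurious divisors. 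Once this bijection is in place, Theorem~\ref{thm_wm_of_sm} together with Remark~\ref{rem_sm_B-stable} yields $|\mathcal D_{B_S}(\mathfrak p_u/\mathfrak h_u)| = \rk \Lambda^+_S(\mathfrak p_u/\mathfrak h_u)$, furnishing the third summand and completing the proof.
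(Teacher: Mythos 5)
Your proposal follows essentially the same route as the paper: reduce to counting colors via Theorem~\ref{thm_ewm_is_free}, split them into the three types coming from the chain of fibrations, and identify the three classes with the colors of $G/P$, the colors of $L'/(L'\cap CK)$, and the $B_S$-stable prime divisors of $\mathfrak p_u/\mathfrak h_u$, respectively. The first two counts are fine (the paper routes type~(2) through $P/H \to L/K \to L/CK$ rather than through $G/KP_u$, but your $C$-invariance argument for $\mathcal D_{B_L}(L/K) \leftrightarrow \mathcal D_{B_{L'}}(L'/(L'\cap CK))$ is the same as the paper's observation that the stabilizer of $eCK$ in $B_L$ contains $C$ and hence acts transitively on the fibers). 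The one place where you defer to a ``plan'' is exactly where the paper does its real work: upgrading restriction to the fiber $P_u/H_u$ to a bijection between the type-(3) colors and $\mathcal D_{B_S}(P_u/H_u)$. The paper does not use the local structure theorem here and does not produce a product decomposition --- the preimage of the open $B$-orbit in $G/KP_u$ is a homogeneous bundle $B *_{B_S}(P_u/H_u)$, not literally a product, so the open piece you hope to exhibit is not quite available. Instead the paper proves two general lemmas, Proposition~\ref{prop_pullback} (pullbacks of prime divisors under smooth surjections with irreducible fibers are prime and reduced) and Proposition~\ref{prop_restriction} (the restriction of an invariant prime divisor to the fiber of a homogeneous bundle is reduced, with the stabilizer permuting its components transitively), and combines the latter with Remark~\ref{rem_sm_B-stable} to rule out reducibility coming from the disconnectedness of~$B_S$. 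These same lemmas are also what your types (1) and (2) silently use: to get $|\mathcal D_1| = |\Pi\setminus\Pi_L|$ you need the preimage in $G/H$ of each color of $G/P$ to be irreducible, which is Proposition~\ref{prop_pullback}(\ref{prop_pullback_b}) applied to the connected fiber $P/H$. So the outline and strategy match the paper, but the fiber-restriction bijections you treat as routine (or propose to obtain from a product structure that is not there) constitute the actual technical content of the paper's proof.
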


Let $\Xi$ denote the set of indecomposable elements of~$\widehat \Lambda^+$.
In what follows, $\Xi$ will be divided into three parts $\Xi_1,\Xi_2,\Xi_3$ according to the summands in the right-hand side of~(\ref{eqn_rank}), and we are going to describe these parts one by one.

For every $\alpha \in \Pi \setminus \Pi_L$, let $\overline \varpi_\alpha$ denote the restriction to $H$ of $\varpi_\alpha$ regarded as an element of~$\mathfrak X(P)$.
Put $\Xi_1 = \lbrace (\varpi_\alpha, - \overline \varpi_\alpha) \mid \alpha \in \Pi \setminus \Pi_L \rbrace$.

\begin{theorem} \label{thm_gen_1}
One has $\Xi_1 \subset \Xi$.
\end{theorem}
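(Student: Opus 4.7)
The plan is to realize $\Xi_1$ as the image of the indecomposable elements of $\widehat \Lambda^+_G(G/P)$ under pullback along the $G$-equivariant fibration $\pi \colon G/H \to G/P$.

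My first step would be to describe $\widehat \Lambda^+_G(G/P)$ explicitly. Since $P \supset B^-$, any nonzero $v \in [V_G(\lambda)^*]^{(P)}_\chi$ is in particular $B^-$-semi-invariant, hence proportional to the lowest weight vector $v_\lambda^- \in V_G(\lambda)^*$ of $T$-weight~$-\lambda$, which forces $\chi = -\lambda$. The vector $v_\lambda^-$ is then $P$-semi-invariant iff the simple $L$-submodule it generates in~$V_G(\lambda)^*$ is $1$-dimensional, iff the character $-\lambda$ of $T$ extends to a character of~$L$, iff $\langle \lambda, \beta^\vee \rangle = 0$ for every $\beta \in \Pi_L$, iff $\lambda \in \mathfrak X(L)$. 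Combined with the dominance condition, this gives
\[
\widehat \Lambda^+_G(G/P) = \bigoplus_{\alpha \in \Pi \setminus \Pi_L} \ZZ^+ \cdot (\varpi_\alpha, -\varpi_\alpha),
\]
so the indecomposable elements of $\widehat \Lambda^+_G(G/P)$ are exactly $(\varpi_\alpha, -\varpi_\alpha)$ for $\alpha \in \Pi \setminus \Pi_L$.

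My next step would be to pull back to~$G/H$ along~$\pi$. Since $P$ is connected, the fibers $P/H$ of $\pi$ are irreducible; hence for every color $D_\alpha$ of $G/P$ the preimage $\pi^{-1}(D_\alpha)$ is an irreducible $B$-stable but non-$G$-stable divisor in~$G/H$, that is, a color of type~(\ref{type1}). The line bundle $G *_P \CC_{\varpi_\alpha}$ on $G/P$ pulls back to $G *_H \CC_{\overline \varpi_\alpha}$ on~$G/H$, and the pullback of the $B$-semi-invariant section of $B$-weight~$\varpi_\alpha$ on~$G/P$ (which corresponds to $(\varpi_\alpha, -\varpi_\alpha)$ via Theorem~\ref{thm_ewm_is_free} applied to~$G/P$) is a $B$-semi-invariant section of the pullback bundle of the same $B$-weight, with divisor equal to $\pi^{-1}(D_\alpha)$. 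Applying Theorem~\ref{thm_ewm_is_free} to~$G/H$, we conclude that $(\varpi_\alpha, -\overline \varpi_\alpha)$ is the indecomposable element of $\widehat \Lambda^+$ corresponding to the color $\pi^{-1}(D_\alpha)$, which yields $\Xi_1 \subset \Xi$.

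The main technical point to verify carefully is the equivalence in the first step between $v_\lambda^-$ being $L'$-fixed and $\lambda$ belonging to the sublattice $\mathfrak X(L) \subset \mathfrak X(T)$. The remainder of the argument is an assembly of standard facts about Picard groups of generalized flag varieties and pullbacks of sections along fibrations with irreducible fibers.
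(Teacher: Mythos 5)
Your proposal is correct and follows essentially the same route as the paper: Theorem~\ref{thm_gen_1} is deduced there from Proposition~\ref{prop_Xi1_D1} by pulling back the colors of $G/P$ and their defining sections along $\phi_1 \colon G/H \to G/P$, using Propositions~\ref{prop_lb_pb}, \ref{prop_pullback}(\ref{prop_pullback_b}), and~\ref{prop_ewm_for_G/P} exactly as you do. The only addition is that you spell out the lowest-weight-vector argument behind Proposition~\ref{prop_ewm_for_G/P}, which the paper records as straightforward; your argument for it is correct.
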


Next we proceed to defining the set $\Xi_2$.

The character restriction map $\mathfrak X(T) \to \mathfrak X(T \cap L')$ induces an isomorphism
\begin{equation} \label{eqn_restr}
\tau_L \colon \ZZ\lbrace \varpi_\alpha \mid \alpha \in \Pi_L \rbrace \xrightarrow{\sim} \mathfrak X(T \cap L'),
\end{equation}
and for every $\lambda \in \mathfrak X(T \cap L')$ we put $\widetilde \lambda = \tau_L^{-1}(\lambda)$.
In particular, $\widetilde \lambda = \varpi_\alpha$ whenever $\lambda$ is the fundamental weight of $L'$ corresponding to a simple root $\alpha \in \Pi_L$.

Let $\Xi'_2$ denote the set of indecomposable elements of $\widehat \Lambda^+_{L'}(L'/(L'\cap CK))$.
For every $(\lambda, \chi) \in \Xi'_2$ we construct a character $\widetilde \chi \in \mathfrak X(K)$ as follows.
Let $\psi \in \mathfrak X(C)$ be the restriction of $\widetilde \lambda$ to~$C$.
Since $[V_{L'}(\lambda)^*]^{(L' \cap CK)}_\chi \ne 0$, it follows that $\left.\lambda\right|_{L'\cap C} = - \left.\chi\right|_{L' \cap C}$.
As $CK = C(L' \cap CK)$ and $C \cap (L' \cap CK) = L' \cap C$, there exists a unique character $\chi_0 \in \mathfrak X(CK)$ such that $\left.\chi_0 \right|_{L' \cap CK} = \chi$ and $\left.\chi_0 \right|_C = -\psi$.
Then we put $\widetilde \chi = \left. \chi_0 \right|_K$.

By definition, we put $\Xi_2 = \lbrace (\widetilde \lambda, \widetilde \chi) \mid (\lambda, \chi) \in \Xi'_2 \rbrace$.

\begin{theorem} \label{thm_gen_2}
One has $\Xi_2 \subset \Xi \setminus \Xi_1$.
\end{theorem}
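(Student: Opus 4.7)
The plan is to identify each $(\tilde\lambda, \tilde\chi) \in \Xi_2$ with a type-(2) color of $G/H$ (in the trichotomy from the introduction), via the chain of fibrations $G/H \to G/KP_u \to G/P$. First I would apply Theorem \ref{thm_ewm_is_free} to $L'$ (which is semisimple and simply connected by the assumptions on $G$) to put $\Xi'_2$ in bijection with the set of colors of $L'/(L'\cap CK)$. Using the canonical $L'$-equivariant identification $L'/(L'\cap CK) \cong L/CK$ and the principal torus bundle $L/K \to L/CK$, together with the local-structure description of $G/KP_u \to G/P$ over the open $B$-orbit in $G/P$ (where it trivializes as $B \times^{B_L} L/K$), these colors lift canonically to colors of $G/KP_u$ that are not pullbacks from $G/P$. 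Pulling back further through $G/H \to G/KP_u$, whose fiber $P_u/H_u$ is irreducible, yields a bijection between $\Xi'_2$ and the set of type-(2) colors of $G/H$. The disjointness $\Xi_2 \cap \Xi_1 = \emptyset$ is then immediate, since the first coordinates $\tilde\lambda$ of elements in $\Xi_2$ lie in $\ZZ\{\varpi_\alpha : \alpha \in \Pi_L\}$ by the definition of $\tau_L$, while those in $\Xi_1$ are fundamental weights $\varpi_\alpha$ with $\alpha \in \Pi \setminus \Pi_L$.

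The heart of the proof is the biweight computation. Given $(\lambda,\chi) \in \Xi'_2$, pick $f \in \CC[L']^{(B_{L'} \times (L'\cap CK))}_{(\lambda,\chi)}$ furnished by Theorem \ref{thm_ewm_is_free} applied to $L'$. I would extend $f$ to a $(B_L \times CK)$-semiinvariant function $\hat f \in \CC[L]^{(B_L \times CK)}_{(\tilde\lambda, \chi_0)}$ by twisting via the character $\psi = \tilde\lambda|_C$; the well-definedness of the extension on the cosets of $L' \cap C$ rests on the two identities $\psi|_{L'\cap C} = \lambda|_{L'\cap C}$ (from the definition of $\tau_L$) and $\chi|_{L'\cap C} = -\lambda|_{L'\cap C}$ (necessary for $[V_{L'}(\lambda)^*]^{(L'\cap CK)}_\chi \ne 0$), which together match the prescribed values $\chi_0|_C = -\psi$ and $\chi_0|_{L'\cap CK} = \chi$ on the overlap. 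Extending $\hat f$ further to $G$ via the fibration $G \to G/CKP_u$ (whose restriction over the open $B$-orbit in $G/P$ is controlled by the product structure coming from the local structure theorem) produces a $(B \times CKP_u)$-semiinvariant section $F$ on $G$ of biweight $(\tilde\lambda, \chi_0)$; restriction of $\chi_0$ along $H \hookrightarrow CKP_u$ gives $\tilde\chi = \chi_0|_K$, so $F \in \CC[G]^{(B \times H)}_{(\tilde\lambda, \tilde\chi)}$.

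The main obstacle will be verifying that $F$ extends regularly to all of $G$ and that its divisor on $G/H$ is a single color (rather than a sum of several $B$-stable prime divisors). I expect to handle this by tracing the divisor of the original $f$ on $L'/(L'\cap CK)$ through the successive pullbacks along $G/H \to G/CKP_u \to G/P$: since $(\lambda,\chi)$ is indecomposable in $\widehat\Lambda^+_{L'}(L'/(L'\cap CK))$, the divisor of $f$ is a single color of $L'/(L'\cap CK)$, and the irreducibility of the fibers of the maps $G/H \to G/CKP_u$ and $G/CKP_u \to G/P$ (at least generically) should guarantee that the pullback divisor of $F$ on $G/H$ remains irreducible. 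Combining this with Theorem \ref{thm_ewm_is_free} applied to $G/H$ then identifies $(\tilde\lambda, \tilde\chi)$ with an element of $\Xi$, completing the inclusion $\Xi_2 \subset \Xi \setminus \Xi_1$.
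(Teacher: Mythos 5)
Your setup follows the paper's route: the chain of bijections $\mathcal D_2 \leftrightarrow \mathcal E_2 \leftrightarrow \mathcal D_{B_{L'}}(L'/(L'\cap CK))$, the extension of the $(B_{L'}\times (L'\cap CK))$-semiinvariant from $L'$ to a $(B_L\times CK)$-semiinvariant on $L$ via the twist by $\psi = \widetilde\lambda|_C$ (your compatibility check on $L'\cap C$ is exactly the one the paper performs), the further extension over $P$ using triviality of the $P_u$-action, and the disjointness of $\Xi_1$ and $\Xi_2$ via supports. All of that is correct and matches Steps 1--4 of the paper's proof of Proposition \ref{prop_Xi2_D2}.

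However, there is a genuine gap at precisely the point you flag as ``the main obstacle.'' Your section $F$ is constructed only over the open set $\phi_1^{-1}(O_1) = BP/H$, and outside of it $F$ is merely a rational $B$-semiinvariant section of $G*_H\CC_{-\widetilde\chi}$. Its divisor therefore has the form $D + \sum_{F'\in\mathcal D_1} c_{F'} F'$ with a priori arbitrary integers $c_{F'}$, since the colors in $\mathcal D_1$ are exactly the components of the complement of $\phi_1^{-1}(O_1)$. The argument you propose --- irreducibility of the fibers of $G/H\to G/KP_u\to G/P$ forcing irreducibility of the pullback divisor --- only controls the part of $\div F$ lying \emph{over} $O_1$ (this is Propositions \ref{prop_pullback} and \ref{prop_restriction}, which the paper also uses); it says nothing about zeros or poles of $F$ along the boundary divisors in $\mathcal D_1$, because the color $D$ in question maps \emph{dominantly} to $G/P$ and is not a pullback from any intermediate quotient. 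The paper closes this gap with two additional ideas you do not have: first, a direct representation-theoretic verification that $(\widetilde\lambda,\widetilde\chi)\in\widehat\Lambda^+$, obtained by locating a $K$-semiinvariant of weight $\widetilde\chi$ inside the $P_u$-invariants $W\subset V_G(\widetilde\lambda)^*$ and using $H_u\subset P_u$; this furnishes a genuinely regular section $\widetilde s$ of the same $B$-weight, and the constancy of the $B$-invariant quotient $s_D/\widetilde s$ forces $c_{F'}\ge 0$. Second, the possibility $c_{F'}>0$ is excluded by Theorem \ref{thm_ewm_is_free} together with the fact that $\supp\widetilde\lambda\cap\supp\lambda_{F'}=\varnothing$, which would make $\widetilde\lambda-\lambda_{F'}$ non-dominant. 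Without these two steps the identification $(\lambda_D,\chi_D)=(\widetilde\lambda,\widetilde\chi)$ is not established.
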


\begin{remark} \label{rem_disjoint_supports}
As can be seen from the definitions, for every $(\lambda, \chi) \in \Xi_1$ and $(\lambda',\chi') \in \Xi_2$, one has $\supp \lambda \cap \supp \lambda' = \varnothing$.
\end{remark}

We now put $\Xi_{12} = \Xi_1 \cup \Xi_2$ and $\Xi_3 = \Xi \setminus \Xi_{12}$.

To describe $\Xi_3$, we need to make special choices of $K$ and $S$ within their conjugacy classes.
As in Theorem~\ref{thm_weight_lattice}, we put $B_S = B_L \cap K$, choose $K$ and $S$ to satisfy~\textup(\ref{S1}\textup)--\textup(\ref{S6}\textup), and let $\iota \colon \mathfrak X(T) \to \mathfrak X(T \cap B_S)$ be the character restriction map.

Let $\Xi'_3$ denote the set of indecomposable elements of the monoid~$\Lambda^+_S(\mathfrak p_u / \mathfrak h_u)$.
For every $\mu \in \Xi'_3$, let $\rho_\mu$ be the element of $\Lambda_S(\mathfrak p_u / \mathfrak h_u)^\vee$ such that $\rho_\mu(\mu) = 1$ and $\rho_\mu(\mu') = 0$ for all $\mu' \in \Xi'_3 \setminus \lbrace \mu \rbrace$.
We shall also regard $\rho_\mu$ as an element of $\Lambda^\vee$ via the natural inclusion $\Lambda_S(\mathfrak p_u / \mathfrak h_u)^\vee \hookrightarrow \Lambda^\vee$ induced by~$\iota$ (see Theorem~\ref{thm_weight_lattice}).

For every $\mu \in \Xi'_3$ we let $\widetilde \mu \in \mathfrak X(T)$ be an arbitrary extension of~$\mu$ to~$T$.
For every $\alpha \in \Pi$, put $\Xi_{12}(\alpha) = \lbrace (\lambda, \chi) \in \Xi_{12} \mid \varpi_\alpha \in \supp \lambda \rbrace$.
Put also $\Pi_{12} = \lbrace \alpha \in \Pi \mid \Xi_{12}(\alpha) \ne \varnothing \rbrace$.

Now recall from the definitions in~\S\,\ref{subsec_spherical_varieties} that $\Sigma \subset \Lambda$.
Then for every $\mu \in \Xi'_3$ and $\alpha \in \Pi_{12}$ we may define
\begin{equation} \label{eqn_coeff}
\delta(\mu, \alpha) =
\begin{cases}
1 & \text{if} \ \alpha \in \Sigma, \ |\Xi_{12}(\alpha)| = 1, \ \text{and} \ \rho_\mu(\alpha) = 1; \\
0 & \text{otherwise}.
\end{cases}
\end{equation}

The following theorem expresses $\Xi_3$ via $\Xi'_3$, $\Xi_1$, $\Xi_2$, and~$\Pi \cap \Sigma$.

\begin{theorem} \label{thm_gen_3}
Under the above assumptions and notation, the following assertions hold.
\begin{enumerate}[label=\textup{(\alph*)},ref=\textup{\alph*}]
\item \label{thm_gen_3_a}
There is a bijection $\Xi'_3 \to \Xi_3$ taking each $\mu$ to an element of the form $(\lambda_\mu, \chi_\mu) = (\widetilde \mu,0) + \sum \limits_{\Omega \in \Xi_{12}} a_{\mu,\Omega} \Omega$ with $a_{\mu,\Omega} \in \ZZ$.

\item \label{thm_gen_3_b}
The coefficients $\lbrace a_{\mu,\Omega} \mid \Omega \in \Xi_{12} \rbrace$ satisfy the following system of linear equations: for every $\alpha \in \Pi_{12}$, the coefficient at $\varpi_\alpha$ in $\lambda_\mu$ equals $\delta(\mu,\alpha)$.

\item \label{thm_gen_3_c}
If $K$ is not contained in any proper parabolic subgroup of~$L$ then the coefficients $a_{\mu,\Omega}$ are uniquely determined from the above system of linear equations.
\end{enumerate}
\end{theorem}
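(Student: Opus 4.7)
The plan is to exploit the chain of $G$-equivariant fibrations $G/H \to G/KP_u \to G/P$ together with the correspondence (Theorem~\ref{thm_ewm_is_free}) between $\Xi$ and the set of colors of~$G/H$. By Theorems~\ref{thm_gen_1} and~\ref{thm_gen_2}, the sets $\Xi_1$ and $\Xi_2$ account for colors pulled back from~$G/P$ and colors pulled back from~$G/KP_u$ that are not of type~(\ref{type1}), respectively. Hence $\Xi_3$ corresponds to colors of $G/H$ not pulled back from~$G/KP_u$. By Remark~\ref{rem_Montagard} the fiber of $G/H \to G/KP_u$ over the base point is $K$-equivariantly isomorphic to the spherical $S$-module $\mathfrak p_u / \mathfrak h_u$; combining the local structure theorem with Theorem~\ref{thm_wm_of_sm} and Remark~\ref{rem_sm_B-stable} identifies the remaining colors with $B_S$-stable prime divisors in $\mathfrak p_u/\mathfrak h_u$, which are in natural bijection with the elements of~$\Xi'_3$. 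Together with the rank equality $|\Xi_3| = |\Xi'_3|$ from Theorem~\ref{thm_rank}, this gives the required bijection $\mu \mapsto (\lambda_\mu, \chi_\mu)$.

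To obtain the expansion claimed in~(\ref{thm_gen_3_a}), I would transport a $B$-semi-invariant local equation of the color labelled by~$\mu$ through the local structure theorem to a $B_S$-semi-invariant equation on $\mathfrak p_u/\mathfrak h_u$; this forces $\iota(\lambda_\mu) = \mu$ in the notation of Theorem~\ref{thm_weight_lattice}. Since $\iota(\widetilde \mu) = \mu$ by construction, Theorem~\ref{thm_weight_lattice} gives $\lambda_\mu - \widetilde \mu \in \Ker \iota = \Lambda_L(L/K)$. The first coordinates of the elements of~$\Xi_1$ span $\ZZ \lbrace \varpi_\alpha \mid \alpha \in \Pi \setminus \Pi_L \rbrace$, while, by the definition of~$\widetilde \lambda$, the first coordinates of~$\Xi_2$ generate $\tau_L^{-1}(\Lambda_{L'}(L'/(L'\cap CK)))$ inside $\ZZ \lbrace \varpi_\alpha \mid \alpha \in \Pi_L \rbrace$. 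The transitivity of $L'$ on $L/CK$ identifies $\Lambda_L(L/CK)$ with $\Lambda_{L'}(L'/(L'\cap CK))$, and the residual $C$-contribution to $\Lambda_L(L/K)$ is absorbed by the $\Pi \setminus \Pi_L$ part, so $\lambda_\mu - \widetilde \mu$ can be written as a $\ZZ$-linear combination of first coordinates of $\Xi_{12}$. The second coordinate $\chi_\mu$ is then forced to equal $\sum a_{\mu,\Omega} \chi_\Omega$, since $(\widetilde \mu, 0)$ has zero character component.

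For part~(\ref{thm_gen_3_b}), I would compute, for each $\alpha \in \Pi_{12}$, the coefficient $c_\alpha(\mu)$ of $\varpi_\alpha$ in~$\lambda_\mu$ geometrically. A standard fact (see~\cite{Tim}) asserts that for a color $D$ with associated biweight $(\lambda, \chi)$, the coefficient of $\varpi_\alpha$ in~$\lambda$ records the action of the minimal parabolic $P_\alpha$ on~$D$ and is thereby determined by the combinatorial type of $\alpha$ with respect to~$D$. For a type~(\ref{type3}) color corresponding to~$\mu$, this invariant transports to $\mathfrak p_u / \mathfrak h_u$ and reduces to the pairing $\rho_\mu(\alpha) \in \lbrace 0, 1 \rbrace$. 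A case analysis on whether $\alpha \in \Sigma$, whether $|\Xi_{12}(\alpha)| = 1$, and the value of $\rho_\mu(\alpha)$ then shows that $c_\alpha(\mu) = 1$ exactly when $\delta(\mu,\alpha) = 1$: if $\alpha \notin \Sigma$, no simple spherical relation can pair the type~(\ref{type3}) color with a $\Xi_{12}$-color; if $|\Xi_{12}(\alpha)| \ne 1$, the $P_\alpha$-orbit is already exhausted by $\Xi_{12}$-colors; and $\rho_\mu(\alpha) = 0$ trivially forces $c_\alpha(\mu) = 0$. I expect this case analysis, which ultimately leans on the classification of spherical modules (Remark~\ref{rem_SM}), to be the main obstacle of the proof.

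For part~(\ref{thm_gen_3_c}), the hypothesis that $K$ is contained in no proper parabolic subgroup of~$L$ forces $\Lambda_L(L/K)$ to attain maximal rank~$|\Pi_L|$, so the first coordinates of $\Xi_{12}$ span $\mathfrak X(T) \otimes_\ZZ \QQ$. It follows that $|\Pi_{12}| = |\Xi_{12}|$ and the coefficient matrix of the linear system in~(\ref{thm_gen_3_b}) is invertible, uniquely determining the $a_{\mu, \Omega}$.
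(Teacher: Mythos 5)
Your skeleton (the chain of fibrations, the identification of the type-(\ref{type3}) colors with $B_S$-stable prime divisors in $\mathfrak p_u/\mathfrak h_u$, localization at minimal parabolics for part~(\ref{thm_gen_3_b})) matches the paper's, but there are genuine gaps. The most serious one is in part~(\ref{thm_gen_3_a}): you derive, at best, that $\lambda_\mu - \widetilde\mu$ lies in $\Ker\iota$ and is an integer combination of the \emph{first} coordinates of $\Xi_{12}$, and then assert that "the second coordinate is forced". It is not: distinct elements of $\Xi_{12}$ can share the same first coordinate while carrying different characters (in Example~\ref{ex_SL3} both $(\varpi_1,-\varpi_1)$ and $(\varpi_1,\varpi_1-\varpi_2)$ lie in~$\Xi_2$), so a relation among first coordinates determines neither the coefficients $a_{\mu,\Omega}$ nor the character $\chi_\mu$ --- this indeterminacy is exactly what part~(\ref{thm_gen_3_c}) and Example~\ref{ex_SL3} are about. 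The paper avoids the problem by arguing with divisors rather than weights: it constructs an explicit $B$-semiinvariant rational function $f$ of weight $\widetilde\mu$ on $G/H$ (extending a $B_S$-eigenfunction on $P_u/H_u$ step by step through the fibrations) with $\div f = D_\mu - \sum_{D\in\mathcal D_1\cup\mathcal D_2} a_D D$, and the isomorphism of Theorem~\ref{thm_ewm_is_free} then converts this one divisor identity into the identity of biweights, both coordinates at once. That same computation yields the identification of $\rho_{D_\mu}$ with the extension of $\rho_\mu$ to~$\Lambda$, which your part~(\ref{thm_gen_3_b}) uses but never justifies; note also that $\rho_\mu(\alpha)$ is \emph{not} confined to $\lbrace 0,1\rbrace$ (see the tables in Examples~\ref{ex_SL6} and~\ref{ex_SO7}), and that part~(\ref{thm_gen_3_b}) rests on Propositions~\ref{prop_localization}, \ref{prop_D_in_D(alpha)}, \ref{prop_Foschi}, not on the classification of spherical modules.

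Part~(\ref{thm_gen_3_c}) as written is incorrect. The hypothesis that $K$ lies in no proper parabolic subgroup of~$L$ does not force $\Lambda_L(L/K)$ to have rank $|\Pi_L|$: for $K=L$ this lattice is trivial, and in Example~\ref{ex_SO7} it has rank~$1$ while $|\Pi_L|=2$. Moreover the system in~(\ref{thm_gen_3_b}) is generally not square ($|\Pi_{12}|=5$ but $|\Xi_{12}|=3$ in Example~\ref{ex_SL6}), so "invertibility of the coefficient matrix" is not the right formulation; what must be shown is that $(b_\Omega)\mapsto\sum_\Omega b_\Omega\lambda_\Omega$ is injective on~$\ZZ^{\Xi_{12}}$. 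The paper proves this by contradiction: a nontrivial relation $\sum_D b_D(\lambda_D,\chi_D)=(0,\chi)$ forces $b_D=0$ for $D\in\mathcal D_1$ by disjointness of supports (Remark~\ref{rem_disjoint_supports}), and the surviving $\mathcal D_2$-part produces a character of infinite order on the reductive group $L'\cap CK$; Proposition~\ref{prop_proper_parabolic} then places $L'\cap CK$, hence~$K$, inside a proper parabolic, contradicting the hypothesis. This is where the hypothesis actually enters, and your proposal supplies no substitute for that mechanism.
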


\begin{remark}
The condition that $K$ be not contained in a proper parabolic subgroup of~$L$ can be always achieved by an appropriate choice of~$P$.
On the other hand, this condition is essential in part~(\ref{thm_gen_3_c}) of the above theorem:
Example~\ref{ex_SL3} shows that in general the uniqueness of the coefficients $a_{\mu, \Omega}$ may fail.
\end{remark}

\begin{remark}
We point out that application of Theorem~\ref{thm_gen_3} (as well as of Theorem~\ref{thm_weight_lattice}) always requires that conditions \textup(\ref{S1}\textup)--\textup(\ref{S6}\textup) be fulfilled, which may cause certain difficulties for computations in concrete examples.
Nevertheless, if $(\mathfrak l', \mathfrak l' \cap (\mathfrak c +\mathfrak k))$ is a symmetric pair (that is, $\mathfrak l' \cap (\mathfrak c +\mathfrak k)$ equals the set of fixed points of an involutive automorphism of~$\mathfrak l'$) then an embedding of $\mathfrak l' \cap (\mathfrak c +\mathfrak k)$ in $\mathfrak l'$ (and hence of $K$ in~$L$) satisfying \textup(\ref{S1}\textup)--\textup(\ref{S6}\textup) can be constructed from the corresponding Satake diagram as discussed in~\cite[\S\,26.4]{Tim}; see also the classification of all symmetric pairs along with their Satake diagrams in~\cite[Table~26.3]{Tim}.
Note that symmetric pairs form a major part in the classification of~\cite{Kr} mentioned in the Introduction.
\end{remark}

\begin{remark} \label{rem_par_ind}
If $H_u = P_u$ then the homogeneous space $G/H$ is said to be \textit{parabolically induced} from $L/K$.
In this case $\Xi_3 = \varnothing$ and hence $\Xi = \Xi_1 \cup \Xi_2$.
\end{remark}

The next two propositions provide some necessary and some sufficient conditions for an element $\alpha \in \Pi_{12}$ with $|\Xi_{12}(\alpha)|=1$ to lie in~$\Sigma$.

\begin{proposition} \label{prop_alpha_in_Sigma1}
Suppose that $\alpha \in \Pi_{12}$ and $|\Xi_{12}(\alpha)|=1$.
If $\alpha \in \Sigma$ then the following conditions hold:
\begin{enumerate}[label=\textup{(\alph*)},ref=\textup{\alph*}]
\item \label{prop_alpha_in_Sigma1_a}
$\alpha \in \Lambda$;

\item \label{prop_alpha_in_Sigma1_b}
there exists a unique $\mu \in \Xi'_3$ such that $\rho_\mu(\alpha) = 1$ and $\rho_{\mu'}(\alpha) \le 0$ for all $\mu' \in \Xi'_3 \setminus \lbrace \mu \rbrace$.
\end{enumerate}
\end{proposition}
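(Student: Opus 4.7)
The plan is to dispatch (a) as immediate --- the inclusion $\Sigma \subset \Lambda$ is built into the very definition of spherical roots recalled in \S\,\ref{subsec_spherical_varieties} --- and to deduce (b) by translating the statement into the language of colors and applying Luna's structure theory.

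By Theorem~\ref{thm_ewm_is_free}, the indecomposable elements of $\widehat\Lambda^+$ are in bijection with the colors of~$G/H$, and this bijection respects the partition $\Xi = \Xi_1 \cup \Xi_2 \cup \Xi_3$ of \S\,\ref{sect_main_results}. For $\alpha \in \Sigma \cap \Pi$, Luna's classical theory of colors yields $|\mathcal D(\alpha)| = 2$, where $\mathcal D(\alpha) = \lbrace D \in \mathcal D \mid P_\alpha D \ne D \rbrace$; moreover, both colors in $\mathcal D(\alpha)$ satisfy $\rho_{v_D}(\alpha) = 1$, and $\rho_{v_D}(\alpha) \le 0$ for every $D \notin \mathcal D(\alpha)$. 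I would verify that, under the bijection of Theorem~\ref{thm_ewm_is_free}, a color $D$ lies in $\mathcal D(\alpha)$ precisely when the $\lambda$-component of the corresponding $(\lambda, \chi) \in \Xi$ contains $\varpi_\alpha$ in its support --- this is also consistent with Theorem~\ref{thm_gen_3}(\ref{thm_gen_3_b}) and the definition~(\ref{eqn_coeff}) of $\delta(\mu, \alpha)$.

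Granted this identification, the hypothesis $|\Xi_{12}(\alpha)| = 1$ forces exactly one of the two colors in $\mathcal D(\alpha)$ to come from $\Xi_{12}$ and the other from $\Xi_3$. Labelling the associated element of $\Xi'_3$ as $\mu$ via Theorem~\ref{thm_gen_3}(\ref{thm_gen_3_a}), Theorem~\ref{thm_gen_3}(\ref{thm_gen_3_b}) forces $\delta(\mu, \alpha) = 1$, and unwinding~(\ref{eqn_coeff}) gives $\rho_\mu(\alpha) = 1$. Uniqueness of such $\mu$ is then automatic: any $\mu' \ne \mu$ with $\delta(\mu', \alpha) = 1$ would produce a third element of $\Xi$ whose $\lambda$-component contains $\varpi_\alpha$, contradicting $|\mathcal D(\alpha)| = 2$.

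The remaining inequality $\rho_{\mu'}(\alpha) \le 0$ for $\mu' \ne \mu$ should follow from identifying the abstractly defined functional $\rho_{\mu'}$, extended from $\Lambda_S(\mathfrak p_u / \mathfrak h_u)^\vee$ to $\Lambda^\vee$ via the character restriction $\iota$ of Theorem~\ref{thm_weight_lattice}, with the geometric valuation $\rho_{v_{D_{\mu'}}}$ along the type-$3$ color $D_{\mu'} \subset G/H$ corresponding to $\mu'$. This compatibility is the main obstacle: the two functionals have a priori unrelated definitions --- one a purely combinatorial dual basis element on the weight lattice of the spherical $S$-module $\mathfrak p_u/\mathfrak h_u$, the other an honest order of vanishing of $B$-semiinvariant rational functions along a divisor in $G/H$. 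Bridging them should rely on the local structure theorem from \S\,\ref{subsec_regular_embedding} together with Remark~\ref{rem_Montagard}, which identifies the fiber $P_u/H_u$ of $G/H \to G/KP_u$ with the spherical $S$-module $\mathfrak p_u/\mathfrak h_u$ and so pairs the colors of that module with the type-$3$ colors of $G/H$. Once that is in place, Luna's inequality $\rho_{v_{D_{\mu'}}}(\alpha) \le 0$ for $D_{\mu'} \notin \mathcal D(\alpha)$ completes the argument.
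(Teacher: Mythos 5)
Your proposal is correct and takes essentially the same route as the paper, whose proof of part~(\ref{prop_alpha_in_Sigma1_b}) simply combines Propositions~\ref{prop_localization} and~\ref{prop_D_in_D(alpha)} in exactly the way you describe. The compatibility $\rho_\mu = \rho_{v_{D_\mu}}$ that you flag as the main obstacle is indeed the implicit ingredient, and it is established in the paper as a byproduct of the proof of Theorem~\ref{thm_gen_3}(\ref{thm_gen_3_b}), via the $B$-equivariant extension of semiinvariant functions from $P_u/H_u$ to $G/H$.
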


Observe from Theorem~\ref{thm_gen_1} that $\Pi \setminus \Pi_L \subset \Pi_{12}$.
For every $\alpha \in \Pi \setminus \Pi_L$, let $I_\alpha$ denote the ideal in~$\mathfrak p_u$ generated by $\mathfrak g_{-\alpha}$.

\begin{proposition} \label{prop_alpha_in_Sigma2}
Given $\alpha \in \Pi \setminus \Pi_L$, the following assertions hold.
\begin{enumerate}[label=\textup{(\alph*)},ref=\textup{\alph*}]
\item \label{prop_alpha_in_Sigma2_a}
If $\alpha \in \Sigma$ then $I_\alpha \not\subset \mathfrak h_u$.

\item \label{prop_alpha_in_Sigma2_b}
If $I_\alpha \not\subset \mathfrak h_u$ and $\mathfrak g_{-\alpha}$ commutes with $[\mathfrak s, \mathfrak s]$ then $\alpha \in \Sigma$.
\end{enumerate}
\end{proposition}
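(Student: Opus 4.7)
\textbf{Plan for part~(\ref{prop_alpha_in_Sigma2_a}).}
I would argue by contraposition. Assume $I_\alpha \subset \mathfrak h_u$ and, for contradiction, that $\alpha \in \Sigma$. Since $\alpha \in \Pi \setminus \Pi_L$, Remark~\ref{rem_disjoint_supports} (together with the fact that $\Xi_1$ contains the element $(\varpi_\alpha, -\overline\varpi_\alpha)$ whose support equals $\{\varpi_\alpha\}$) forces $|\Xi_{12}(\alpha)| = 1$, so Proposition~\ref{prop_alpha_in_Sigma1} is applicable: it yields $\alpha \in \Lambda$ and a unique $\mu \in \Xi'_3$ with $\rho_\mu(\alpha) = 1$ and $\rho_{\mu'}(\alpha) \le 0$ for all $\mu' \in \Xi'_3 \setminus \{\mu\}$. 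I would then translate these coweight data into structural information about the spherical $S$-module $V := \mathfrak p_u/\mathfrak h_u$. Writing the decomposition of~$\iota(\alpha)$ in the free basis~$\Xi'_3$ of $\Lambda_S(V)$ gives $\iota(\alpha) = \mu + \sum_{\mu' \ne \mu} \rho_{\mu'}(\alpha)\, \mu'$ with all other coefficients non-positive; combining this with the bijection of Theorem~\ref{thm_wm_of_sm} applied to~$V$ (and the identification $P_u/H_u \simeq \mathfrak p_u/\mathfrak h_u$ from Remark~\ref{rem_Montagard}), the coefficient $1$ at~$\mu$ forces some $T$-weight vector of weight~$-\alpha$ in~$\mathfrak p_u$ to have nonzero image in~$V$. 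By $T$-stability of the weight decomposition this vector may be taken inside $\mathfrak g_{-\alpha}$, so $\mathfrak g_{-\alpha} \not\subset \mathfrak h_u$, which contradicts $\mathfrak g_{-\alpha} \subset I_\alpha \subset \mathfrak h_u$.

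\textbf{Plan for part~(\ref{prop_alpha_in_Sigma2_b}).}
Here the plan is constructive: essentially reverse the argument of (\ref{prop_alpha_in_Sigma2_a}) to produce the unique $\mu$ whose existence is guaranteed by Proposition~\ref{prop_alpha_in_Sigma1}(\ref{prop_alpha_in_Sigma1_b}). The commutation hypothesis $[[\mathfrak s, \mathfrak s], \mathfrak g_{-\alpha}] = 0$ ensures that the $S$-action on any image of $\mathfrak g_{-\alpha}$ in~$V$ factors through the abelian quotient $S/[S,S]$, forcing such an image to span a one-dimensional $S$-submodule; combining this with $I_\alpha \not\subset \mathfrak h_u$ and a $T$-weight argument, together with the classification of spherical modules (Remark~\ref{rem_SM}) applied to~$V$ and Theorem~\ref{thm_wm_of_sm}, should yield an indecomposable $\mu \in \Xi'_3$ of weight $\iota(\alpha)$ with $\rho_\mu(\alpha) = 1$. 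Via Theorem~\ref{thm_gen_3} this produces a color $D_\mu$ of~$G/H$ with $\rho_{D_\mu}(\alpha) = 1$; together with the type-(1) color $D_\alpha$ corresponding to $(\varpi_\alpha, -\overline\varpi_\alpha) \in \Xi_1$ this supplies the pair of colors needed by Luna's criterion to place $\alpha$ in~$\Sigma$. A rank-one analysis of the $\SL_2(\alpha)$-slice through~$eH$ in~$G/H$ completes the argument by distinguishing $\alpha$ from $2\alpha$ as a spherical root: the commutation hypothesis is precisely what forces the slice to be modeled on $\SL_2/T_\alpha$ rather than $\SL_2/N_{\SL_2}(T_\alpha)$.

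\textbf{Main obstacle.}
The most delicate technical step, common to both parts, is the careful passage between $T$-weight vectors in~$\mathfrak p_u$ surviving in~$V$ and indecomposable generators of $\Lambda^+_S(V)$ with prescribed $\rho$-values at~$\alpha$; this correspondence is mediated by the restriction map~$\iota$ and the detailed structure of the spherical modules classified in Remark~\ref{rem_SM}, and extracting the single weight~$-\alpha$ from the decomposition of~$\iota(\alpha)$ requires tracking the interaction between the $\alpha$-grading of~$\mathfrak p_u$, the $L$-module structure of the irreducible submodule generated by~$\mathfrak g_{-\alpha}$, and the $S$-action on~$V$. In part~(\ref{prop_alpha_in_Sigma2_b}) an additional subtlety is that $I_\alpha \not\subset \mathfrak h_u$ does not automatically give $\mathfrak g_{-\alpha} \not\subset \mathfrak h_u$, so the commutation hypothesis must play a double role: rigidifying the $S$-module structure of~$V$ near the image of~$I_\alpha$, and providing the rank-one slice identification that puts~$\alpha$, not~$2\alpha$, into~$\Sigma$.
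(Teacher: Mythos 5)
Your argument for part~(\ref{prop_alpha_in_Sigma2_a}) has a genuine gap at its central step. From Proposition~\ref{prop_alpha_in_Sigma1}(\ref{prop_alpha_in_Sigma1_b}) you correctly extract a unique $\mu \in \Xi'_3$ with $\rho_\mu(\alpha)=1$, and the expansion $\iota(\alpha)=\mu+\sum_{\mu'\ne\mu}\rho_{\mu'}(\alpha)\mu'$ is legitimate. But the inference that ``the coefficient $1$ at $\mu$ forces some $T$-weight vector of weight $-\alpha$ in $\mathfrak p_u$ to have nonzero image in $V$,'' and hence that $\mathfrak g_{-\alpha}\not\subset\mathfrak h_u$, is false: the generators of $\Lambda^+_S(V)$ are $B_S$-highest weights of $\CC[V]$, and writing $\iota(\alpha)$ as an integer combination of them says nothing about $-\iota(\alpha)$ (let alone $-\alpha$) being a weight of $V$ itself. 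The paper's own Example~\ref{ex_SL3} with the choice $P=B^-$ refutes the intermediate claim you are aiming at: there $\alpha_2\in\Sigma$, $\mu=\alpha_1+\alpha_2$ satisfies $\rho_\mu(\alpha_2)=1$, yet $\mathfrak g_{-\alpha_2}\subset\mathfrak h_u$ and $-\alpha_2$ is not a weight of $\mathfrak p_u/\mathfrak h_u$. (This is precisely why the proposition is stated with the ideal $I_\alpha$ rather than with $\mathfrak g_{-\alpha}$.) The paper instead proves~(\ref{prop_alpha_in_Sigma2_a}) by translating both sides into statements about the root subgroup $U_{-\alpha}$ acting on $P_u/H_u$: Lemma~\ref{lemma_acts_nontrivially} shows $I_\alpha\not\subset\mathfrak h_u$ is equivalent to $U_{-\alpha}$ acting nontrivially on $P_u/H_u$ (via the normal subgroup $N_\alpha\subset P_u$ generated by $U_{-\alpha}$, whose Lie algebra is $I_\alpha$), and Lemma~\ref{lemma_alpha_in_Sigma} (built on Lemma~\ref{lemma_unstable} and Proposition~\ref{prop_localization}) shows $\alpha\in\Sigma$ is equivalent to $U_{-\alpha}$ moving an element of $\mathcal F$. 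Your purely weight-combinatorial route discards exactly the group-action information that carries the content here.

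Part~(\ref{prop_alpha_in_Sigma2_b}) has the same structural problem, which you half-acknowledge: since $I_\alpha\not\subset\mathfrak h_u$ does not give $\mathfrak g_{-\alpha}\not\subset\mathfrak h_u$, ``the image of $\mathfrak g_{-\alpha}$ in $V$'' may well be zero, and then the proposed one-dimensional $S$-submodule, the weight $\iota(\alpha)$ in $\Xi'_3$, and the appeal to the classification of spherical modules all have nothing to attach to. The concluding ``rank-one $\SL_2$-slice analysis'' is also not needed and not carried out: once one produces a second $P_\alpha$-unstable color, Proposition~\ref{prop_localization}(\ref{prop_localization_a}) already yields $\alpha\in\Sigma$ with no $\alpha$-versus-$2\alpha$ ambiguity. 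The paper's actual mechanism for~(\ref{prop_alpha_in_Sigma2_b}) is the one your plan is missing: $U_{-\alpha}$ acts nontrivially on $\CC[P_u/H_u]$ and is normalized by $S$ (it commutes with $S'$), so it must move some $B_S$-highest-weight function $f_\nu$; factoring $f_\nu$ into the indecomposables $f_\mu$, $\mu\in\Xi'_3$, one finds a $U_{-\alpha}$-unstable $f_\mu$, hence a $U_{-\alpha}$-unstable divisor $F=\div f_\mu\in\mathcal F$, and Lemma~\ref{lemma_alpha_in_Sigma} concludes. To repair your write-up you would essentially have to reconstruct these lemmas, so the proposal as it stands does not constitute a proof.
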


\begin{remark}
It is shown in Example~\ref{ex_SO7} that, for $\alpha \in \Pi \setminus \Pi_L$, the conditions of Propositions~\ref{prop_alpha_in_Sigma1} and~\ref{prop_alpha_in_Sigma2}(\ref{prop_alpha_in_Sigma2_a}) do not imply $\alpha \in \Sigma$ in general.
\end{remark}

\section{Tools}
\label{sect_tools}

Throughout this section, $G$ denotes an arbitrary connected reductive group.

\subsection{Minimal parabolic subgroups and colors}

Let $H \subset G$ be a spherical subgroup.

For every $\alpha \in \Pi$, let $P_{\alpha} \supset B$ be the corresponding minimal parabolic subgroup, so that $\mathfrak p = \mathfrak g_{-\alpha} \oplus \mathfrak b$, and let $\mathcal D(\alpha) \subset \mathcal D$ be the set of $P_\alpha$-unstable colors.

The next result follows from the localization technique developed by Luna in~\cite{Lu97}; see also \cite[\S\,3.2]{Lu01}, \cite[\S\S\,30.9--30.11]{Tim}, and~\cite{Kn14}.

\begin{proposition} \label{prop_localization}
For every $\alpha \in \Pi$, the following assertions hold.
\begin{enumerate}[label=\textup{(\alph*)},ref=\textup{\alph*}]
\item \label{prop_localization_a}
$|\mathcal D(\alpha)| \le 2$ and the equality is attained if and only if $\alpha \in \Sigma$.

\item \label{prop_localization_b}
If $\alpha \in \Sigma$ then $\rho_D(\alpha) = 1$ for both $D \in \mathcal D(\alpha)$ and $\rho_D(\alpha) \le 0$ for all $D \in \mathcal D \setminus \mathcal D(\alpha)$.
\end{enumerate}
\end{proposition}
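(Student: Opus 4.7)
The plan is to reproduce Luna's localization argument \cite{Lu97} in the form presented in~\cite[\S\,30.10]{Tim}, reducing the analysis of colors moved by $P_\alpha$ to a rank-$\le 1$ situation that can be handled by the classification of rank-one spherical homogeneous spaces.

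First I would introduce the open $B$-orbit $\mathring X \subset X = G/H$ and its $P_\alpha$-saturation $\mathring X_\alpha = P_\alpha \mathring X$, which is open and $P_\alpha$-stable in $X$. A color $D \in \mathcal D$ lies in $\mathcal D(\alpha)$ precisely when $D$ meets $\mathring X_\alpha$ (equivalently, $P_\alpha D = X$), so the colors in $\mathcal D(\alpha)$ correspond bijectively to the $B$-stable, non-$L_\alpha$-stable prime divisors of $\mathring X_\alpha$ distinct from $\mathring X$.

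Next I would apply the local structure theorem to the $P_\alpha$-action on $\mathring X_\alpha$. This produces an $L_\alpha$-stable affine slice $Z \subset \mathring X_\alpha$ and a $P_\alpha$-equivariant isomorphism $(P_\alpha)_u \times Z \xrightarrow{\sim} \mathring X_\alpha$. Since the rank of a single $P_\alpha$-orbit is at most one, the derived group $[L_\alpha,L_\alpha]$ acts on a generic orbit in $Z$ through a quotient isomorphic to $\mathrm{SL}_2$ or $\mathrm{PGL}_2$ via one of the standard rank-$\le 1$ models. Direct inspection of the four models gives the trichotomy $|\mathcal D(\alpha)| \in \lbrace 0, 1, 2 \rbrace$, with $|\mathcal D(\alpha)| = 2$ occurring exactly for the models $\mathrm{SL}_2/T$ and $\mathrm{PGL}_2/N(T)$; comparison with the definition of spherical roots then identifies these precisely with the case $\alpha \in \Sigma$, proving part~(\ref{prop_localization_a}).

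For part~(\ref{prop_localization_b}), in the two-color case both colors $D_1, D_2$ arise as the closures of the two codimension-one $B_{L_\alpha}$-orbits on the $\mathbb P^1$-slice, and pulling back a $B$-semi-invariant $f_\alpha \in \CC(X)^{(B)}_\alpha$ to this $\mathbb P^1$ yields a rational function with a simple zero and a simple pole at the two $T$-fixed points, whence $\rho_{D_i}(\alpha) = v_{D_i}(f_\alpha) = 1$. For $D \in \mathcal D \setminus \mathcal D(\alpha)$ the divisor $D$ is $P_\alpha$-stable, so the valuation $v_D$ is $P_\alpha$-invariant; the standard description of the $P_\alpha$-invariant valuation cone (see \cite[\S\,30.10]{Tim}) then forces $\rho_D(\alpha) \le 0$. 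The main technical hurdle is the bookkeeping in the local structure step---establishing the existence of the slice $Z$ with the correct equivariance properties---after which the rank-one classification delivers both parts essentially mechanically.
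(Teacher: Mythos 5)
You should first note that the paper does not actually prove this proposition: it is quoted from the literature, with the statement that it ``follows from the localization technique developed by Luna in~\cite{Lu97}; see also \cite[\S\,3.2]{Lu01}, \cite[\S\S\,30.9--30.11]{Tim}, and~\cite{Kn14}.'' So what you are doing is reconstructing the cited argument, and your overall strategy --- saturating the open $B$-orbit under $P_\alpha$, applying the local structure theorem, and reducing to rank-$\le 1$ models --- is indeed the technique those references use. However, your execution is wrong at exactly the two delicate points of the statement. The first error is in the case analysis behind part~(a): the model $\mathrm{PGL}_2/N(T) \cong \SL_2/N(T)$ (unordered pairs of distinct points of $\PP^1$, i.e.\ $\PP^2$ minus a smooth conic) has exactly \emph{one} color, namely the locus of pairs containing the $B$-fixed point, and its weight lattice is $\ZZ\,2\alpha$, so its spherical root --- which in this paper's normalization must be a \emph{primitive element of the weight lattice} --- is $2\alpha$, not $\alpha$. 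The two-color case is realized only by $\SL_2/T \cong \mathrm{PGL}_2/T$. With your identification, in the case $2\alpha \in \Sigma$ (Luna's type $a'$) you would conclude $|\mathcal D(\alpha)| = 2$ and $\alpha \in \Sigma$, both of which are false; since separating $\alpha \in \Sigma$ from $2\alpha \in \Sigma$ is precisely the content of the equivalence in~(a), this is not a cosmetic slip but a failure of the proof.

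The second error is in part~(b), where your argument is internally inconsistent. A nonconstant $T$-eigenfunction on $\PP^1$ whose zeros and poles lie at the two $T$-fixed points necessarily has a simple zero at one and a simple pole at the other, so your stated premise would give $\rho_{D_1}(\alpha) = 1$ but $\rho_{D_2}(\alpha) = -1$, contradicting both your conclusion and the proposition itself. The flaw is the ``$\PP^1$-slice'' picture: in the two-color case the relevant slice is two-dimensional, e.g.\ $\SL_2/T \cong (\PP^1 \times \PP^1) \setminus \Delta$, where the eigenfunction of weight~$\alpha$, namely $f_\alpha = x_1y_1/(x_0y_1 - x_1y_0)$, has simple zeros along \emph{both} colors $\lbrace x_1 = 0\rbrace$ and $\lbrace y_1 = 0 \rbrace$, while its polar divisor is supported on the diagonal, which is $G$-stable in the completion and hence not a color; this is how both values come out equal to~$+1$. (Your treatment of the unmoved colors, via $P_\alpha$-invariance of $v_D$, does point at a correct standard fact, but as written it is only a pointer to the literature; in the references this inequality too is extracted from the correctly identified rank-one models.) In summary, the proposal as written would establish a false trichotomy in~(a) and a false sign pattern in~(b), and both steps need to be redone with the correct list of rank-one models and their colors.
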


In the next two propositions, $G$ is assumed to be semisimple and simply connected.

The following result is well known; see, for instance, \cite[Lemma~2.9]{Avd_solv_inv}.

\begin{proposition} \label{prop_D_in_D(alpha)}
For every $\alpha \in \Pi$ and $D \in \mathcal D$, the condition $D \in \mathcal D(\alpha)$ holds if and only if $\varpi_\alpha \in \supp \lambda_D$.
\end{proposition}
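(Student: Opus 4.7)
The plan is to translate the geometric condition ``$D$ is $P_\alpha$-stable'' into an equivalent condition on the pair $(\lambda_D,\chi_D)$ assigned to $D$ by Theorem~\ref{thm_ewm_is_free}. The passage goes through the $B$-semi-invariant section $s_D := s_{\lambda_D,\chi_D}$ and, eventually, through the highest-weight vector of the simple $G$-module $V_G(\lambda_D)$.

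The first step is to show that $D$ is $P_\alpha$-stable if and only if $s_D$ is a $P_\alpha$-semi-invariant section of $G *_H \CC_{-\chi_D}$. For this I would pass via~(\ref{eqn_iso_sections}) to the corresponding function $\tilde s_D \in \CC[G]^{(B \times H)}_{(\lambda_D,\chi_D)}$; its divisor of zeros on $G$ is $\pi^{-1}(D)$, where $\pi \colon G \to G/H$ is the canonical projection. If $D$ is $P_\alpha$-stable then so is $\pi^{-1}(D)$ under the action of $P_\alpha$ on $G$ by left translations, so for every $g \in P_\alpha$ the functions $g \cdot \tilde s_D$ and $\tilde s_D$ have identical divisors of zeros. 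Since $G$ is semisimple, $\CC[G]^\times = \CC^\times$, whence $g \cdot \tilde s_D \in \CC^\times \cdot \tilde s_D$. This shows that $\tilde s_D$ (and therefore $s_D$) is $P_\alpha$-semi-invariant. The converse is immediate from the equality $\div(g \cdot s_D) = g \cdot \div(s_D)$.

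Next, I would exploit multiplicity-freeness of $\Gamma(G *_H \CC_{-\chi_D})$ to identify the $G$-submodule generated by $s_D$ with a simple module isomorphic to $V_G(\lambda_D)$, of which $s_D$ is the highest-weight vector. Writing $P_\alpha = U_{-\alpha}B$, one sees that $s_D$ is $P_\alpha$-semi-invariant if and only if $U_{-\alpha}$ stabilizes the line $\CC s_D$, which by the standard representation theory of the $SL_2$-triple associated with $\alpha$ is equivalent to the condition $\lambda_D(\alpha^\vee) = 0$. The latter is visibly the same as $\varpi_\alpha \notin \supp \lambda_D$. Combining the two equivalences yields $D \in \mathcal D(\alpha) \iff \varpi_\alpha \in \supp \lambda_D$, which is the claim.

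The main technical point is the first reduction; its crux is the absence of non-constant global units on a semisimple group, which upgrades ``same divisor of zeros'' to ``proportional as functions.'' Once this is established, everything else reduces to standard facts about highest-weight modules for $SL_2$.
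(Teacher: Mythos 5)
Your proof is correct. The paper itself gives no argument here---it records the statement as well known and cites \cite[Lemma~2.9]{Avd_solv_inv}---and your reduction (stability of $D$ $\Leftrightarrow$ $P_\alpha$-semi-invariance of $s_D$ via $\CC[G]^\times=\CC^\times$, then the $\mathfrak{sl}_2$-computation $f_\alpha v_{\lambda_D}=0 \Leftrightarrow \lambda_D(\alpha^\vee)=0$ on the highest-weight line singled out by multiplicity-freeness) is exactly the standard one underlying that reference. The only nitpick is that $P_\alpha \ne U_{-\alpha}B$; the product is merely dense in $P_\alpha$, which still suffices because the stabilizer of the line $\CC s_D$ is closed and already contains $B$ (the paper uses the same density observation before Lemma~\ref{lemma_unstable}).
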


Below is one more consequence of the localization technique, which was obtained in~\cite[\S\,2.2, Theorem~2.2]{Fos} (see also~\cite[Lemma~30.24]{Tim}).

\begin{proposition} \label{prop_Foschi}
For every $\alpha \in \Pi \cap \Sigma$ and every $D \in \mathcal D(\alpha)$, the coefficient at $\varpi_\alpha$ in~$\lambda_D$ equals~$1$.
\end{proposition}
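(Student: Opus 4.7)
The approach is to expand $\alpha$, viewed as an element of~$\Lambda$, as a $\ZZ$-linear combination of the weights $\lambda_D$ indexing the indecomposables of~$\widehat \Lambda^+$, and then equate the coefficients at $\varpi_\alpha$ on the two sides.

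First I would observe that $\alpha \in \Sigma \subset \Lambda$, so there is a (unique up to scalar) $B$-semi-invariant rational function $f_\alpha \in \CC(G/H)^{(B)}_\alpha$. Since $G/H$ is homogeneous and thus carries no $G$-stable prime divisors, the very definition of~$\rho_D$ gives $\div(f_\alpha) = \sum_{D \in \mathcal D} \rho_D(\alpha)\, D$. Lifted to~$G$, the function $f_\alpha$ becomes a $(B \times H)$-semi-invariant rational function of biweight~$(\alpha, 0)$; writing it as a quotient of two regular $(B \times H)$-semi-invariants (possible since $\CC[G]$ is a unique factorization domain when $G$ is semisimple and simply connected, so the reduced numerator and denominator must each be semi-invariant) places $(\alpha, 0)$ inside the group completion $\widehat \Lambda$ of~$\widehat \Lambda^+$.

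By Theorem~\ref{thm_ewm_is_free} the group $\widehat \Lambda$ is free abelian with basis $\{(\lambda_D, \chi_D)\}_{D \in \mathcal D}$, so there is a unique expansion $(\alpha, 0) = \sum_{D \in \mathcal D} n_D (\lambda_D, \chi_D)$. The corresponding product $\prod_D s_{\lambda_D,\chi_D}^{n_D}$ is proportional to~$f_\alpha$ and has divisor $\sum_D n_D D$, forcing $n_D = \rho_D(\alpha)$ for every~$D$. Extracting the first component of the expansion yields the identity $\alpha = \sum_{D \in \mathcal D} \rho_D(\alpha)\, \lambda_D$ in~$\mathfrak X(T)$.

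It remains to extract the coefficient at $\varpi_\alpha$ from both sides of this identity. The left-hand side contributes $2$, the coefficient at $\varpi_\alpha$ in the expansion of a simple root as a combination of fundamental weights. For the right-hand side, Proposition~\ref{prop_D_in_D(alpha)} shows that the coefficient at $\varpi_\alpha$ in $\lambda_D$ vanishes for $D \notin \mathcal D(\alpha)$ and is a positive integer for $D \in \mathcal D(\alpha)$, while Proposition~\ref{prop_localization}(\ref{prop_localization_b}) gives $\rho_D(\alpha) = 1$ for $D \in \mathcal D(\alpha)$. Writing $\mathcal D(\alpha) = \{D_1, D_2\}$ (of size~$2$ by Proposition~\ref{prop_localization}(\ref{prop_localization_a})), the right-hand side collapses to $c^{D_1}_\alpha + c^{D_2}_\alpha$, where $c^D_\alpha$ denotes the coefficient under investigation. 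The equation $c^{D_1}_\alpha + c^{D_2}_\alpha = 2$ with each summand $\ge 1$ forces both coefficients to equal~$1$. The only non-formal point in this argument is the ratio decomposition certifying $(\alpha, 0) \in \widehat \Lambda$; everything else is a direct chain of consequences of Theorem~\ref{thm_ewm_is_free} and the two localization propositions above.
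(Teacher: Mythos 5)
Your proof is correct, and it takes a genuinely different route from the paper, which does not prove Proposition~\ref{prop_Foschi} at all but quotes it from \cite[\S\,2.2, Theorem~2.2]{Fos} and \cite[Lemma~30.24]{Tim}, where it is established by the localization technique (analysis at minimal parabolic subgroups). Your derivation is instead essentially formal given the other results already stated in the paper: since $G/H$ is homogeneous it carries no $G$-stable prime divisors, so $\div f_\alpha=\sum_{D\in\mathcal D}\rho_D(\alpha)\,D$; factoriality of $\CC[G]$ (valid as $G$ is semisimple and simply connected, which is in force here) exhibits $(\alpha,0)$ as a difference of two elements of $\widehat\Lambda^+$, and freeness of $\widehat\Lambda^+$ (Theorem~\ref{thm_ewm_is_free}) together with the divisor computation pins down the coordinates, yielding $\alpha=\sum_{D\in\mathcal D}\rho_D(\alpha)\,\lambda_D$; extracting the coefficient at $\varpi_\alpha$ and invoking Propositions~\ref{prop_localization} and~\ref{prop_D_in_D(alpha)} shows that the coefficients at $\varpi_\alpha$ in $\lambda_{D_1}$ and $\lambda_{D_2}$ are positive integers summing to $\langle\alpha,\alpha^\vee\rangle=2$, hence both equal~$1$. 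The two points you pass over quickly do hold: the $B$-weights of the reduced numerator and denominator are automatically dominant (left $B$-eigenvectors in $\CC[G]$ are highest-weight vectors), so the corresponding biweights really lie in $\widehat\Lambda^+$; and the product $\prod_D s_{\lambda_D,\chi_D}^{\rho_D(\alpha)}$ is a rational section of the trivially linearized trivial bundle, hence a $B$-eigenfunction of weight~$\alpha$ and proportional to $f_\alpha$ by the open $B$-orbit. What your approach buys is a self-contained proof within the paper's framework where the original offers only a citation; the trade-off is that it is not logically independent of localization, since Propositions~\ref{prop_localization} and~\ref{prop_D_in_D(alpha)} are themselves consequences of that technique.
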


\subsection{Reductive subgroups with characters of infinite order}

\begin{proposition} \label{prop_proper_parabolic}
Suppose that $G$ is semisimple, $H \subset G$ is a reductive \textup(but not necessarily connected\textup) subgroup, and there is a character $\chi \in \mathfrak X(H)$ of infinite order.
Then $H$ is contained in a proper parabolic subgroup of~$G$.
\end{proposition}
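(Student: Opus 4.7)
\medskip

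\noindent\textbf{Proof proposal.} The plan is to exhibit a nontrivial one-parameter subgroup $\lambda \colon \CC^\times \to G$ whose image is centralized by every element of~$H$ and is not contained in~$Z(G)$; the associated Hilbert--Mumford parabolic
\[
P(\lambda) = \lbrace g \in G \mid \lim_{t \to 0} \lambda(t) g \lambda(t)^{-1} \ \text{exists in}\ G \rbrace
\]
will then be a proper parabolic subgroup of~$G$ containing $Z_G(\lambda) \supset H$. It is proper because $\lambda$ is non-central in~$G$, which is automatic once $\lambda$ has infinite image: the center of the semisimple group~$G$ is finite.

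To produce~$\lambda$, I would first show that $\chi$ restricts to a character of infinite order on the connected center $A := Z(H^0)^0$ of~$H^0$. Indeed, $\chi|_{H^0}$ has infinite order (otherwise $\chi^{n[H\colon H^0]} = 1$ for some~$n \ge 1$, contradicting the hypothesis), and since $H^0$ is reductive, $\chi|_{H^0}$ is trivial on $(H^0)'$ and hence is determined by its restriction to~$A$.

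Next I would account for the possible disconnectedness of~$H$ by averaging over the finite group $W := H/H^0$, which acts on~$A$ by conjugation. The restriction $\chi|_A$ is automatically $W$-invariant, since any character is invariant under inner automorphisms. The natural pairing
\[
\bigl(\mathfrak X(A) \otimes_\ZZ \QQ\bigr)^W \times \bigl(\Hom(\CC^\times, A) \otimes_\ZZ \QQ\bigr)^W \to \QQ
\]
is perfect, as one sees by applying the averaging idempotent $\frac{1}{|W|}\sum_{w \in W} w$ to split off the invariants in each lattice. Hence there exists a $W$-invariant cocharacter $\lambda \colon \CC^\times \to A$ with $\chi \circ \lambda \ne 0$. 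This $\lambda$ is nontrivial (so non-central in~$G$), lies in $A \subset Z(H^0)$, and is pointwise fixed by~$W$, so every element of~$H$ commutes with its image; therefore $H \subset Z_G(\lambda) \subset P(\lambda)$, as desired.

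The only genuinely technical point is the passage from the connected case to the disconnected one, handled via averaging over $H/H^0$; the rest is the standard Hilbert--Mumford construction of a parabolic from a cocharacter.
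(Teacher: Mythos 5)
Your proof is correct and follows essentially the same route as the paper: both arguments restrict $\chi$ to the connected center of~$H^0$, use an averaging argument over the finite group $H/H^0$ to produce a one-dimensional subtorus (equivalently, a nontrivial cocharacter $\lambda$) of that center centralized by all of~$H$, and then place $H$ inside the proper parabolic attached to that torus. The only cosmetic differences are that you work with the dual lattice of cocharacters and the perfect pairing on $W$-invariants where the paper takes a $W$-stable complement to $\QQ\chi_0$ in $\mathfrak X(C_0)\otimes_\ZZ\QQ$ and the subtorus it cuts out, and that you exhibit the parabolic explicitly as $P(\lambda)$ where the paper simply cites that the centralizer of a noncentral torus lies in a proper parabolic.
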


\begin{proof}
Let $C_0$ be the connected center of $H^0$ and put $V = \mathfrak X(C_0) \otimes_\ZZ \QQ$.
Clearly, $\chi$ restricts to a nontrivial element $\chi_0 \in \mathfrak X(C_0)$, which we identify with its image in~$V$.
Consider the action of $H$ on itself by conjugation.
As $C_0$ is preserved by this action and acted on trivially by~$H^0$, we obtain an induced action of the finite group $H/H^0$ on $\mathfrak X(C_0)$ (and hence on~$V$).
Observe that~$\chi_0$ is $H/H^0$-fixed, hence there is an $H/H^0$-stable subspace $V_0 \subset V$ complementary to $\QQ\chi_0$.
Let $Z_0 \subset C_0$ be the one-dimensional subtorus determined by the sublattice $V_0 \cap \mathfrak X(C_0)$ of~$\mathfrak X(C_0)$.
Since $\chi_0$ is $H$-fixed, it follows that $H$ acts trivially on~$Z_0$ and hence is contained in the centralizer of~$Z_0$ in~$G$.
It remains to recall that the centralizer of a nontrivial torus in a semisimple group is a reductive subgroup contained in a proper parabolic subgroup.
\end{proof}

\subsection{A general property of smooth morphisms}

Let $X,Y$ be two smooth irreducible algebraic varieties and let $\phi \colon X \to Y$ be a morphism.
Recall from \cite[Ch.~III, Proposition~10.4]{Har} that $\phi$ is \textit{smooth} if and only if for every point $x \in X$ the induced map $T_x X \to T_{\phi(x)} Y$ of the Zariski tangent spaces is surjective.
In this case, each irreducible component of each fiber of~$\phi$ has dimension $\dim X - \dim Y$ by~\cite[Ch.~III, Theorem~10.2]{Har}.

The following result is known to specialists; we provide it with a proof due to the lack of a reference.

\begin{proposition} \label{prop_pullback}
Suppose that $\phi$ is smooth and surjective.
Let $D \subset Y$ be a prime divisor and put $E=\phi^{-1}(D)$ regarded as a subset of~$X$.
Regard $D$ as a prime Cartier divisor on~$Y$ and let $\phi^*D$ denote the pullback of~$D$ by~$\phi$.
Then
\begin{enumerate}[label=\textup{(\alph*)},ref=\textup{\alph*}]
\item \label{prop_pullback_a}
$\phi^*D$ is reduced as a Cartier divisor on~$X$;

\item \label{prop_pullback_b}
if $\phi$ has irreducible fibers then $E$ is irreducible and $E = \phi^*D$ as Cartier divisors on~$X$.
\end{enumerate}
\end{proposition}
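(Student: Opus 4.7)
The plan is to exploit the two defining features of smoothness: flatness, and regularity of the fibers. Part~(a) is a local computation at each generic point of~$E$. Let $\xi$ be the generic point of an irreducible component of~$E$, and let $f$ be a local equation of~$D$ near $\phi(\xi)$. Since locally $E$ is cut out by the single function $\phi^*f$, Krull's principal ideal theorem forces $\dim \mathcal{O}_{X,\xi} = 1$; applying the standard flatness dimension formula
\[
\dim \mathcal{O}_{X,\xi} \;=\; \dim \mathcal{O}_{Y,\phi(\xi)} \,+\, \dim \mathcal{O}_{X_{\phi(\xi)},\xi},
\]
together with $\phi(\xi) \in D$, then pins down $\phi(\xi) = \eta_D$, the generic point of~$D$, and makes the fiber ring $\mathcal{O}_{X_{\eta_D},\xi}$ zero-dimensional. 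Smoothness of~$\phi$ guarantees that this fiber ring is moreover regular, hence a field. Consequently $\mathfrak{m}_\xi = f \cdot \mathcal{O}_{X,\xi}$, so $\phi^*f$ is a uniformizer at~$\xi$ and $\ord_\xi(\phi^*f) = 1$. As this holds at every generic point of~$E$, the Cartier divisor $\phi^*D$ is reduced.

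For part~(b), the additional hypothesis on fibers of~$\phi$ lets me upgrade the above to set-theoretic irreducibility of~$E$. The same flatness computation shows that every generic point of every irreducible component of~$E$ is sent to~$\eta_D$, hence all such points lie in the single fiber $\phi^{-1}(\eta_D)$. This generic fiber is itself irreducible --- in a flat family of finite type over~$\CC$, geometric irreducibility of the fibers is a constructible condition, so irreducibility of all closed fibers forces it for the generic fiber as well. Therefore $E$ admits only one generic point and is irreducible. Combining this with~(a) yields $\phi^*D = E$ as reduced Cartier divisors.

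The principal subtlety I expect will be the dimension bookkeeping that identifies $\phi(\xi)$ with the generic point $\eta_D$: one must ensure that $\xi$ lands in a codimension-$1$ stratum of~$Y$ rather than in a deeper stratum, and this is exactly where both flatness and the pure codimension of~$E$ enter. A secondary and more minor point is the transition from irreducibility of closed fibers of~$\phi$ (the natural reading of the hypothesis) to irreducibility of the generic fiber needed in~(b); this is harmless in our characteristic-zero setting but deserves explicit mention.
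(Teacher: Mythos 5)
Your argument is correct, but it runs along a genuinely different track from the paper's. For part~(\ref{prop_pullback_a}) the paper stays differential-geometric: it shrinks $Y$ so that $D$ is smooth, notes that the differential of a local equation $f$ is surjective on tangent spaces, and composes with the surjective differential of $\phi$ to conclude that $\phi^*(D\cap Z)$ is smooth, hence reduced. You instead work at the generic point $\xi$ of a component of $E$ and use flatness (the dimension formula $\dim \mathcal O_{X,\xi}=\dim \mathcal O_{Y,\phi(\xi)}+\dim \mathcal O_{X_{\phi(\xi)},\xi}$ together with Krull) to force $\phi(\xi)=\eta_D$, and then regularity of the zero-dimensional fiber ring to see that $\phi^*f$ is a uniformizer at~$\xi$; this is a clean local-algebra proof that in fact needs only flatness plus reducedness of the fibers at their maximal points, so it is marginally more general, at the cost of invoking the flat dimension formula rather than a tangent-space computation. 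For part~(\ref{prop_pullback_b}) the divergence is larger: the paper argues with closed points, using the fiber dimension theorem to show every component of $E$ dominates $D$ and then that a general (irreducible, equidimensional) closed fiber over $D$ must fill out a single component; you instead place all generic points of components of $E$ inside the scheme-theoretic fiber $X_{\eta_D}$ and deduce irreducibility of that fiber from irreducibility of the closed fibers via constructibility of geometric irreducibility (EGA~IV~9.7.7) plus the Jacobson property of finite-type $\CC$-schemes. The caveat you flag yourself --- passing from closed fibers to the fiber over the non-closed point $\eta_D$ --- is exactly the step that must be justified, and it does go through as you indicate (a constructible subset of $Y$ containing every closed point is all of $Y$); but note that this is where your proof leans on machinery noticeably heavier than the elementary dimension count the paper uses. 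Both proofs are complete and correct.
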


\begin{proof}
(\ref{prop_pullback_a})
To prove the assertion, we need to show that $\phi^*D$ has no multiplicities.
Choose an open subset $Z \subset Y$ such that $D \cap Z$ is nonempty and smooth; then it suffices to prove that $\phi^*(D \cap Z)$ is smooth (regarded as a closed subscheme of~$\phi^{-1}(Z)$).
Let $y \in D \cap Z$, $x \in \phi^{-1}(y)$ and let $f$ be a local equation of $D \cap Z$ at~$y$.
Then the local equation of $\phi^*D$ at~$x$ is $\phi^*f = f \circ \phi$.
Since $D \cap Z$ is smooth at $y$, the differential $df_y \colon T_yY \to \CC$ is surjective.
In view of the smoothness of~$\phi$, we conclude that the differential $d(\phi^*f)_x \colon T_xX \to \CC$ is also surjective, hence $\phi^*(D \cap Z)$ is smooth at~$x$, which completes the proof.

(\ref{prop_pullback_b})
Thanks to~(\ref{prop_pullback_a}) it remains to show that $E$ is irreducible.
As $E$ is the preimage of a prime divisor, each of its irreducible components is a prime divisor in~$X$.
The remaining argument is based on the fiber dimension theorem~\cite[Ch.~II, Exercise~3.22(e)]{Har}.
Since all the fibers of $\phi$ are irreducible and of the same dimension, we conclude that each irreducible component of $E$ maps dominantly to~$D$.
Recall that the image of a dominant morphism contains a nonempty open subset, therefore $D$ has a nonempty open subset of points $y$ such that $\phi^{-1}(y)$ meets all irreducible components of~$E$.
As $\phi^{-1}(y)$ is irreducible, the latter is possible only if $E$ has exactly one irreducible component.
\end{proof}

\subsection{Some properties of homogeneous bundles}

Let $N$ be a connected linear group and let $H \subset N$ be a subgroup.
Suppose $X$ is a smooth irreducible $N$-variety equipped with an $N$-equivariant map $\phi \colon X \to N/H$ and the fiber $Y = \phi^{-1}(eH)$ is irreducible. (Then $Y$ is automatically smooth by~\cite[Ch.~III, Corollary~10.7 and Theorem~10.2]{Har}.)
In the terminology of~\cite[\S\,4.8]{PV} or~\cite[\S\,2.1]{Tim}, $X$ is a \textit{homogeneous bundle} over~$N/H$.

The following proposition is straightforward.

\begin{proposition} \label{prop_lb_pb}
Suppose $X=N/H_1$ for a subgroup $H_1 \subset H$.
Then for every $\chi \in \mathfrak X(H)$ the pullback $\phi^*(N *_{H} \CC_\chi)$ is an $N$-linearized line bundle on $N/H_1$ isomorphic to $N *_{H_1} \CC_{\overline \chi}$ where $\overline \chi \in \mathfrak X(H_1)$ is the restriction of $\chi$ to~$H_1$.
\end{proposition}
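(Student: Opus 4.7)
The plan is to give an explicit $N$-equivariant isomorphism, using only the description of $N *_H \CC_\chi$ recalled in~\S\,\ref{subsec_LLB}. First I would unwind the definitions: by construction
\[
\phi^*(N *_H \CC_\chi) = (N/H_1) \times_{N/H} (N *_H \CC_\chi),
\]
so a point is a pair $(gH_1, [g',v])$ with $g'H = gH$. Writing $g' = gh$ for some $h \in H$ and applying the relation $[gh,v] = [g, \chi(h)v]$, every such point admits a representative of the canonical form $(gH_1, [g,v])$ with $g \in N$, $v \in \CC_\chi$. The ambiguity in this representative is precisely the simultaneous replacement $(gH_1, [g,v]) \rightsquigarrow (gh_1 H_1,\, [gh_1, \chi(h_1^{-1})v])$ for $h_1 \in H_1$.

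Next I would define the candidate isomorphism
\[
\Psi \colon \phi^*(N *_H \CC_\chi) \to N *_{H_1} \CC_{\overline\chi}, \qquad (gH_1,[g,v]) \longmapsto [g,v]_{H_1},
\]
together with the map in the opposite direction sending $[g,v]_{H_1}$ to $(gH_1, [g,v])$. Well-definedness of both maps reduces to one identity: replacing $g$ by $gh_1$ ($h_1 \in H_1$) transforms the source representative as above, and on the target side $[gh_1, \chi(h_1^{-1})v]_{H_1} = [g, \overline\chi(h_1)\chi(h_1^{-1})v]_{H_1} = [g,v]_{H_1}$, since $\overline\chi(h_1) = \chi(h_1)$ by definition of $\overline\chi$. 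The $N$-equivariance (left translation of the first component) and the fiberwise linearity (the second component) are built into the construction, so $\Psi$ is an isomorphism of $N$-linearized line bundles.

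I do not expect any genuine obstacle: the single substantive check is the compatibility of the two equivalence relations $[gh,v]=[g,\chi(h)v]$ on $N *_H \CC_\chi$ and $[gh_1,v]_{H_1}=[g,\overline\chi(h_1)v]_{H_1}$ on $N *_{H_1} \CC_{\overline\chi}$ under $\Psi$, which is immediate from $\overline\chi = \chi|_{H_1}$. If a more conceptual argument is preferred, one may instead invoke Popov's classification recalled in~\S\,\ref{subsec_LLB}: $N$-linearized line bundles on $N/H_1$ are determined up to isomorphism by their fiber at $eH_1$ regarded as an $H_1$-module. The fiber of $\phi^*(N *_H \CC_\chi)$ at $eH_1$ coincides with the fiber of $N *_H \CC_\chi$ at $\phi(eH_1) = eH$, which is $\CC_\chi$ as an $H$-module and hence $\CC_{\overline\chi}$ as an $H_1$-module; Popov's bijection then identifies the pullback with $N *_{H_1} \CC_{\overline\chi}$.
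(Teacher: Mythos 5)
Your argument is correct; the paper gives no proof at all, merely labelling the proposition ``straightforward,'' and your explicit identification $(gH_1,[g,v])\mapsto [g,v]_{H_1}$, with the single compatibility check $\overline\chi=\chi|_{H_1}$, is exactly the routine verification being left to the reader. The alternative via Popov's bijection (comparing fibers at $eH_1$) is also valid and consistent with how \S\,\ref{subsec_LLB} sets up the correspondence.
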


Let $D$ be an $N$-stable prime divisor in~$X$ and let $E$ be the Cartier divisor on~$Y$ defined as the restriction of $D$ to~$Y$, so that the support of $E$ is $D \cap Y$.
Note that $E$ is $H$-stable.

\begin{proposition} \label{prop_restriction}
The following assertions hold.
\begin{enumerate}[label=\textup{(\alph*)},ref=\textup{\alph*}]
\item \label{prop_restriction_a}
$E$ is a reduced divisor \textup(that is, without multiplicities\textup).

\item \label{prop_restriction_b}
The induced action of $H$ on the set of irreducible components of~$D \cap Y$ is transitive.
In particular, $E$ is a \textup(reduced\textup) prime divisor whenever $H$ is connected.
\end{enumerate}
\end{proposition}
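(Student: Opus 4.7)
My plan is to reduce both assertions to Proposition~\ref{prop_pullback} by passing to the principal $H$-bundle trivializing $\phi$. Using the associated-bundle identification $X\simeq N\times_H Y$, consider the action map
\[
\pi\colon N\times Y\to X,\qquad (n,y)\mapsto n\cdot y,
\]
which is a principal $H$-bundle for the free $H$-action $(n,y)\cdot h=(nh,h^{-1}y)$; in particular $\pi$ is smooth and surjective with irreducible fibers isomorphic to $H$. Since $D$ is $N$-stable, the identity $n^{-1}\cdot D=D$ yields set-theoretically $\pi^{-1}(D)=N\times(D\cap Y)$. The section $\sigma\colon Y\to N\times Y$, $y\mapsto(e,y)$, satisfies $\pi\circ\sigma=i$ (the inclusion $Y\hookrightarrow X$) and $\mathrm{pr}_2\circ\sigma=\mathrm{id}_Y$.

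For part~(a), Proposition~\ref{prop_pullback}(\ref{prop_pullback_a}) applied to $\pi$ shows that $\pi^{*}D$ is a reduced Cartier divisor on $N\times Y$. The $N$-equivariance of $\pi$ (with $N$ acting on $N\times Y$ by left translation on the first factor) makes $\pi^{*}D$ an $N$-invariant Cartier divisor; such a divisor is necessarily of the form $\mathrm{pr}_2^{*}F$ for a unique Cartier divisor $F$ on $Y$, since $N$-invariance forces its local equations to depend only on the second coordinate. The reducedness of $\mathrm{pr}_2^{*}F$ then forces $F$ to be reduced. Pulling back along $\sigma$ gives
\[
E=i^{*}D=\sigma^{*}\pi^{*}D=\sigma^{*}\mathrm{pr}_2^{*}F=F,
\]
so $E$ is reduced.

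For part~(b), I would exploit that $\pi$ is a principal $H$-bundle and $D$ is irreducible to show first that $H$ permutes the irreducible components of $\pi^{-1}(D)$ transitively: any $H$-orbit of components is $H$-stable and closed in $N\times Y$, and its $\pi$-image is then a closed irreducible subset of the irreducible $D$ that surjects onto $D$ (since $\pi$ is a geometric quotient), forcing this orbit to exhaust all components. On the other hand, the irreducible components of $N\times(D\cap Y)$ are exactly the products $N\times C_i$ for the irreducible components $C_1,\dots,C_k$ of $D\cap Y$, and the bundle action sends $N\times C_i$ to $N\times(h^{-1}C_i)$; hence $H$ acts transitively on $\{C_1,\dots,C_k\}$. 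When $H$ is connected this action on a finite set must be trivial, forcing $k=1$, and combined with part~(a) this yields that $E$ is a prime (reduced) divisor.

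The only step requiring more than a routine check is the structure of $N$-invariant Cartier divisors on $N\times Y$, which reduces to the affine-local claim that an invariant regular function on $N\times U$ (for $U\subset Y$ affine) is independent of $N$; everything else consists of bookkeeping with pullbacks and the set-theoretic identity $\pi^{-1}(D)=N\times(D\cap Y)$.
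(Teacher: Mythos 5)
Your proof is correct and follows essentially the same route as the paper: both reduce to Proposition~\ref{prop_pullback}(\ref{prop_pullback_a}) via the action map $N\times Y\to X$ (the paper establishes its smoothness through the tangent-space splitting $T_yX=T_y(Ny)\oplus T_yY$ rather than by invoking the principal-bundle structure), and both prove part~(\ref{prop_restriction_b}) by showing that the $N$-saturation of a single component of $D\cap Y$ must exhaust~$D$, your version simply carrying this out upstairs in $N\times Y$ instead of downstairs in~$X$. One harmless misstatement: the fibers of $N\times Y\to X$ are isomorphic to~$H$ and hence need not be irreducible when $H$ is disconnected, but you never actually use this.
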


\begin{proof}
(\ref{prop_restriction_a})
Consider the morphism $\theta \colon N \times Y \to X$, $(g,y) \mapsto gy$.
Clearly, $\theta$ is surjective.
Thanks to~\cite[Proposition~4.22]{PV}, for every point $y \in Y$ one has $T_yX = T_y(Ny) \oplus T_yY$, therefore $\theta$ is smooth.
Now observe that $\theta^{-1}(D) = N \times (D \cap Y)$, so that $\theta^*D = N \times E$.
Proposition~\ref{prop_pullback}(\ref{prop_pullback_a}) implies that $N \times E$ is reduced, hence so is~$E$.

(\ref{prop_restriction_b})
Let $E_0$ be an irreducible component of~$D \cap Y$.
Observe that $NE_0$ is irreducible and closed in~$X$, hence $NE_0 = D$.
Then $HE_0 = NE_0 \cap Y = D \cap Y$.
\end{proof}

\section{Proofs of the main results}
\label{sect_proofs}

In this subsection, we retain the notation of~\S\,\ref{sect_main_results}.
Besides, for every group $N$ and every irreducible $N$-variety~$X$ we denote by $\mathcal D_N(X)$ the set of $N$-stable prime divisors in~$X$.

\subsection{Preparations}
\label{subsec_preparations}

According to the fixed regular embedding of $H$ in~$P$, we split $\mathcal D$ into a disjoint union $\mathcal D = \mathcal D_1 \cup \mathcal D_2 \cup \mathcal D_3$ as follows.

Firstly, we consider the natural surjective map $\phi_1 \colon G/H \to G/P$ and put
\[
\mathcal D_1 = \lbrace \phi_1^{-1}(E) \mid E \in \mathcal D_B(G/P) \rbrace.
\]
As the fiber $\phi_1^{-1}(eP) \simeq P/H$ is connected, we have $\mathcal D_1 \subset \mathcal D$ thanks to Proposition~\ref{prop_pullback}(\ref{prop_pullback_b}).
Note that $\mathcal D \setminus \mathcal D_1$ consists of all colors of $G / H$ that map dominantly to $G/P$.
Let $O_1$ be the open $B$-orbit in $G/P$; it is the complement to the union of all colors in~$G/P$.
Then $\phi_1^{-1}(O_1) = (G/H) \setminus \bigcup \limits_{D \in \mathcal D_1} D$ and the map $D \mapsto D \cap \phi_1^{-1}(O_1)$ yields a bijection $\mathcal D \setminus \mathcal D_1 \to \mathcal D_B(\phi_1^{-1}(O_1))$.
As $BP$ is open in~$G$, the base point $eP \in G/P$ belongs to~$O_1$.
In what follows we identify $\phi_1^{-1}(eP)$ with~$P/H$.
The stabilizer of $eP$ in $B$ equals $B \cap P = B_L$, therefore by Proposition~\ref{prop_restriction} the map $D \mapsto D \cap \phi_1^{-1}(eP)$ yields a bijection between the sets $D_B(\phi_1^{-1}(O_1))$ and $\mathcal E = \mathcal D_{B_L}(P/H)$.

Up to this moment, we have established the following chain of bijections:
\begin{equation} \label{eqn_chain1}
\mathcal D \setminus \mathcal D_1 \longleftrightarrow \mathcal D_B(\phi_1^{-1}(O_1)) \longleftrightarrow \mathcal E.
\end{equation}

Now consider the natural $L$-equivariant map $\phi_2 \colon P/H \to L/K$ and put
\[
\mathcal E_2 = \lbrace \phi_2^{-1}(E) \mid E \in \mathcal D_{B_L}(L/K) \rbrace.
\]
As the fiber $\phi_2^{-1}(eK) \simeq P_u/H_u$ is connected, we have
$\mathcal E_2 \subset \mathcal E$ thanks to Proposition~\ref{prop_pullback}(\ref{prop_pullback_b}).
We put $\mathcal E_3 = \mathcal E \setminus \mathcal E_2$ and for $i=2,3$ we let $\mathcal D_i$ be the subset of $\mathcal D \setminus \mathcal D_1$ corresponding to~$\mathcal E_i$ via the chain of bijections~(\ref{eqn_chain1}).
Thus we get disjoint unions
\[
\mathcal E = \mathcal E_2 \cup \mathcal E_3 \quad \text{and} \quad \mathcal D = \mathcal D_1 \cup \mathcal D_2 \cup \mathcal D_3.
\]
Note that $\mathcal E_3$ consists of all elements of $\mathcal E$ that map dominantly to~$L/K$.
Let $O_2$ be the open $B_L$-orbit in~$L/K$; it is the complement to the union of all colors in~$L/K$.
Then $\phi_2^{-1}(O_2) = (P/H)\setminus \bigcup\limits_{D \in \mathcal E_2} D$ and the map $D \mapsto D \cap \phi_2^{-1}(O_2)$ yields a bijection $\mathcal E_3 \to \mathcal D_{B_L}(\phi_2^{-1}(O_2))$.
As $B_LK$ is open in~$L$, the base point $eK \in L/K$ belongs to~$O_2$.
In what follows we identify $\phi_2^{-1}(eK)$ with~$P_u/H_u$.
The stabilizer of $eK$ in $B_L$ equals $B_L \cap K = B_S$.
Taking into account Remarks~\ref{rem_Montagard} and~\ref{rem_sm_B-stable}, we find that for every $D \in \mathcal E_3$ each irreducible component of the intersection $D \cap \phi_2^{-1}(eK)$ is $B_S$-stable.
Then it follows from Proposition~\ref{prop_restriction}(\ref{prop_restriction_b}) that $D \cap \phi_2^{-1}(eK)$ is irreducible itself and hence a prime divisor in $\phi_2^{-1}(eK)$, therefore the map $D \mapsto D \cap \phi_2^{-1}(eK)$ yields a bijection between $\mathcal E_3$ and the set $\mathcal F = \mathcal D_{B_S}(P_u/H_u)$.

We have established the following chain of bijections:

\begin{equation} \label{eqn_chain2}
\mathcal E_3 \longleftrightarrow \mathcal D_{B_L}(\phi_2^{-1}(O_2)) \longleftrightarrow \mathcal F.
\end{equation}

Now consider the natural map $\phi_3 \colon L/K \to L/CK$.
As the fiber
\[
\phi_3^{-1}(eCK) \simeq CK/K \simeq C/(C \cap K)
\]
is connected, again by Proposition~\ref{prop_pullback}(\ref{prop_pullback_b}) we obtain
\[
\lbrace \phi_3^{-1}(E) \mid E \in \mathcal D_{B_L}(L/CK) \rbrace \subset \mathcal D_{B_L}(L/K).
\]
Let $O_3$ denote the open $B_L$-orbit in~$L/CK$.
As $B_LCK$ is open in~$L$, the base point $eCK \in L/CK$ belongs to~$O_3$.
Clearly, the stabilizer of $eCK$ in $B_L$ contains~$C$, hence it acts transitively on~$\phi_3^{-1}(eCK)$.
The latter means that the map $E \mapsto \phi_3^{-1}(E)$ is a bijection between $\mathcal D_{B_L}(L/CK)$ and $\mathcal D_{B_L}(L/K)$.
As was observed in Remark~\ref{rem_L/K}, $L'$ acts transitively on $L/CK$, therefore we have an isomorphism $L/CK \simeq L'/(L' \cap CK)$ as $L'$-varieties.
Finally, since $C$ acts trivially on $L/CK$, each color of $L/CK$ is in fact a color of $L'/(L' \cap CK)$ with respect to the Borel subgroup $B_{L'} = B_L \cap L' \subset L'$.

As a result of the above considerations, we get a chain of bijections
\begin{equation} \label{eqn_chain3}
\mathcal E_2 \longleftrightarrow \mathcal D_{B_L}(L/K) \longleftrightarrow \mathcal D_{B_L}(L/CK) \longleftrightarrow \mathcal D_{B_{L'}}(L'/(L'\cap CK)).
\end{equation}

\subsection{Proofs}

At first, we mention the following straightforward result.

\begin{proposition} \label{prop_ewm_for_G/P}
The monoid $\widehat \Lambda^+_G(G/P)$ is freely generated by the elements $(\varpi_\alpha, - \varpi_\alpha)$ with $\alpha \in \Pi \setminus \Pi_L$.
\end{proposition}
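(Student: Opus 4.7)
The plan is to work directly with the characterization $\widehat \Lambda^+_G(G/P) = \{(\lambda,\chi) \in \Lambda^+_G \times \mathfrak X(P) \mid [V_G(\lambda)^*]^{(P)}_\chi \ne 0\}$ from~\S\,\ref{subsec_EWM} and exploit the hypothesis $P \supset B^-$ to pin down every $P$-semiinvariant line in $V_G(\lambda)^*$. First, since $B^- \subset P$, any $P$-semiinvariant vector in $V_G(\lambda)^*$ is in particular $B^-$-semiinvariant. The space of $B^-$-semiinvariants in $V_G(\lambda)^*$ is the one-dimensional lowest weight line $\CC v_{-\lambda}$ of $T$-weight $-\lambda$. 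Hence $(\lambda,\chi) \in \widehat \Lambda^+_G(G/P)$ forces $\chi|_T = -\lambda$, and moreover $(\lambda,\chi)$ lies in the extended weight monoid if and only if the line $\CC v_{-\lambda}$ is $P$-stable.

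Next I would check when $\CC v_{-\lambda}$ is $P$-stable. Writing $P = L \ltimes P_u$ with standard Levi~$L$, the subgroup $P_u$ already acts trivially on $\CC v_{-\lambda}$ (it is built from negative root subgroups, which kill~$v_{-\lambda}$), so $P$-stability is equivalent to $L$-stability. Using the $\mathfrak{sl}_2$-triple attached to each $\alpha \in \Pi_L$, the root subgroup $U_\alpha \subset L$ preserves $\CC v_{-\lambda}$ precisely when $(\lambda,\alpha^\vee)=0$; together with $T$-equivariance this shows that $\CC v_{-\lambda}$ is $L$-stable if and only if $(\lambda,\alpha^\vee)=0$ for every $\alpha \in \Pi_L$, i.e.\ $\lambda \in \bigoplus_{\alpha \in \Pi \setminus \Pi_L} \ZZ^+ \varpi_\alpha$.

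Finally I would verify that, under the simply connectedness of $G$, each fundamental weight $\varpi_\alpha$ with $\alpha \in \Pi \setminus \Pi_L$ actually extends to a character of~$L$ (hence of~$P$), since it is trivial on $T \cap L'$. This makes $(\varpi_\alpha, -\varpi_\alpha)$ a bona fide element of $\Lambda^+_G \times \mathfrak X(P)$, and combined with the previous paragraphs the assignment $\lambda \mapsto (\lambda,-\lambda)$ gives a monoid isomorphism between $\bigoplus_{\alpha \in \Pi \setminus \Pi_L} \ZZ^+ \varpi_\alpha$ and $\widehat \Lambda^+_G(G/P)$, proving the claim.

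There is no serious obstacle here; the only point requiring a small amount of care is the verification that $\varpi_\alpha \in \mathfrak X(P)$ for $\alpha \in \Pi \setminus \Pi_L$, which relies essentially on $G$ being simply connected so that the $\varpi_\alpha$ actually lie in the weight lattice of $T$ rather than just in $\mathfrak X(T) \otimes \QQ$.
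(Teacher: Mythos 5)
Your argument is correct and complete. The paper itself offers no proof of this proposition (it is introduced with ``we mention the following straightforward result''), so there is nothing to compare against line by line; your representation-theoretic route --- using $B^- \subset P$ to force any $P$-eigenvector of $V_G(\lambda)^*$ onto the lowest weight line $\CC v_{-\lambda}$, then testing $L$-stability of that line via the $\mathfrak{sl}_2$-triples for $\alpha \in \Pi_L$ --- is exactly the standard argument the author is taking for granted, and all the small points you flag (triviality of $P_u$ on $v_{-\lambda}$, the identification $\mathfrak X(P) \subset \mathfrak X(T)$, and the fact that $\varpi_\alpha$ extends to $P$ because $G$ is simply connected and $\varpi_\alpha$ vanishes on the coroots of $\Pi_L$) are handled correctly. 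An equivalent geometric route, more in the spirit of the surrounding section, would be to invoke Theorem~\ref{thm_ewm_is_free} together with the fact that the colors of $G/P$ are the Schubert divisors indexed by $\Pi \setminus \Pi_L$, whose $(B \times P)$-biweights are $(\varpi_\alpha, -\varpi_\alpha)$; but your direct computation needs no such input and is at least as clean.
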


\begin{proof}[{Proof of Theorem~\textup{\ref{thm_rank}}}]
According to Theorem~\ref{thm_ewm_is_free}, we have
\[
\rk \widehat \Lambda^+ = |\mathcal D| = |\mathcal D_1| + |\mathcal D_2| + |\mathcal D_3|.
\]
It follows from the considerations in~\S\,\ref{subsec_preparations}, Theorem~\ref{thm_ewm_is_free}, and Proposition~\ref{prop_ewm_for_G/P} that $|\mathcal D_1| = |\mathcal D_B(G/P)| = \rk \widehat \Lambda^+_G(G/P) = |\Pi \setminus \Pi_L|$.
Chain~(\ref{eqn_chain3}) along with Theorem~\ref{thm_ewm_is_free} imply $|\mathcal D_2| = |\mathcal E_2| = \rk \widehat \Lambda^+_{L'}(L'/(L'\cap CK))$.
Chain~(\ref{eqn_chain2}) implies
\[
|\mathcal D_3| = |\mathcal E_3| = |\mathcal F| =  \rk \Lambda^+_S(\mathfrak p_u / \mathfrak h_u)
\]
thanks to Remarks~\ref{rem_Montagard} and~\ref{rem_sm_B-stable}.
\end{proof}

Theorem~\ref{thm_gen_1} is implied by

\begin{proposition} \label{prop_Xi1_D1}
$\Xi_1 = \lbrace (\lambda_D,\chi_D) \mid D \in \mathcal D_1 \rbrace$.
\end{proposition}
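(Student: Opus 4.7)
The plan is to show that the map $d$ from Theorem~\ref{thm_ewm_is_free} sends the elements of $\Xi_1$ precisely to the colors in~$\mathcal D_1$, after which the proposition follows because $d$ is an isomorphism. Concretely, I will pull back the canonical $B$-semiinvariant sections on $G/P$ corresponding to the colors of~$G/P$ via the map $\phi_1 \colon G/H \to G/P$, and track both their biweights and their divisors of zeros.

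First, by Proposition~\ref{prop_ewm_for_G/P}, for each $\alpha \in \Pi \setminus \Pi_L$ there is a (unique up to scalar) nonzero $B$-semiinvariant section $s_\alpha \in \Gamma(G *_P \CC_{\varpi_\alpha})^{(B)}_{\varpi_\alpha}$, and by Theorem~\ref{thm_ewm_is_free} applied to $G/P$, its divisor of zeros is a color $E_\alpha \in \mathcal D_B(G/P)$. The assignment $\alpha \mapsto E_\alpha$ is a bijection between $\Pi \setminus \Pi_L$ and $\mathcal D_B(G/P)$. Now apply Proposition~\ref{prop_lb_pb} with $N = G$, $H_1 = H$, and $\chi = \varpi_\alpha \in \mathfrak X(P)$: the pullback $\phi_1^*(G *_P \CC_{\varpi_\alpha})$ is $G$-equivariantly isomorphic to $G *_H \CC_{\overline{\varpi}_\alpha}$. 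Hence $\phi_1^* s_\alpha$ is a nonzero $B$-semiinvariant global section of $G *_H \CC_{\overline{\varpi}_\alpha}$ of weight~$\varpi_\alpha$, which via~\eqref{eqn_3dims} shows $(\varpi_\alpha, -\overline{\varpi}_\alpha) \in \widehat \Lambda^+$.

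Next I compute the divisor of $\phi_1^* s_\alpha$. The map $\phi_1 \colon G/H \to G/P$ is smooth and surjective with irreducible fibers isomorphic to $P/H$ (both smoothness and fiber connectedness come from the fact that $H$ contains~$H_u$ and $P/H$ is irreducible). So Proposition~\ref{prop_pullback}(\ref{prop_pullback_b}) applies to the prime divisor $E_\alpha$ and gives $\phi_1^*E_\alpha = \phi_1^{-1}(E_\alpha)$ as Cartier divisors, and this pullback is precisely the element of $\mathcal D_1$ associated to $E_\alpha$ under the chain of bijections from~\S\,\ref{subsec_preparations}. Therefore $\div(\phi_1^* s_\alpha) = \phi_1^{-1}(E_\alpha) \in \mathcal D_1$. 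By the definition of $(\lambda_D,\chi_D)$ and the isomorphism $d$ of Theorem~\ref{thm_ewm_is_free}, this says $(\lambda_{\phi_1^{-1}(E_\alpha)}, \chi_{\phi_1^{-1}(E_\alpha)}) = (\varpi_\alpha, -\overline{\varpi}_\alpha)$, proving the inclusion $\Xi_1 \subseteq \{(\lambda_D,\chi_D) \mid D \in \mathcal D_1\}$.

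Finally, the reverse inclusion is immediate from cardinalities: the computation $|\mathcal D_1| = |\Pi \setminus \Pi_L| = |\Xi_1|$ in the proof of Theorem~\ref{thm_rank} together with the injectivity of $d$ forces the inclusion just proved to be an equality. The only subtle point in this plan is the compatibility of the pullback of sections with the pullback of divisors, which is handled cleanly by Proposition~\ref{prop_pullback}(\ref{prop_pullback_b}) once smoothness of $\phi_1$ and irreducibility of its fibers are observed; everything else is bookkeeping of biweights via Proposition~\ref{prop_lb_pb} and~\eqref{eqn_3dims}.
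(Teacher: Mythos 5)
Your proposal is correct and follows essentially the same route as the paper: pull back the canonical $B$-semiinvariant sections from $G/P$ via $\phi_1$, identify the resulting line bundle and biweight with Proposition~\ref{prop_lb_pb}, compute the divisor with Proposition~\ref{prop_pullback}(\ref{prop_pullback_b}), and conclude with Proposition~\ref{prop_ewm_for_G/P}. The only cosmetic difference is that you run the bijection from $\Pi \setminus \Pi_L$ toward $\mathcal D_1$ and close with a cardinality count, whereas the paper starts from an arbitrary $D \in \mathcal D_1$; both arguments establish the same correspondence.
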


\begin{proof}
Take any $D \in \mathcal D_1$ and let $E \in \mathcal D_B(G/P)$ be such that $D = \phi_1^{-1}(E)$.
Consider the homogeneous line bundle $\mathcal L = G *_P \CC_{-\chi_E}$ and a section $s_E \in \Gamma(\mathcal L)$ such that $\div s_E = E$.
By Proposition~\ref{prop_lb_pb}, there is an isomorphism $\varphi_1^*\mathcal L \simeq G*_H \CC_{-\overline \chi_E}$ of $G$-linearized line bundles on~$G/H$.
Clearly, the $B$-weight of $\phi_1^*s_E$ is~$\lambda_D$.
As $D = \div (\phi_1^*s_E)$ by Proposition~\ref{prop_pullback}(\ref{prop_pullback_b}), we finally obtain $(\lambda_D,\chi_D) = (\lambda_E, \overline \chi_E)$.
It remains to make use of Proposition~\ref{prop_ewm_for_G/P}.
\end{proof}

Theorem~\ref{thm_gen_2} is implied by

\begin{proposition} \label{prop_Xi2_D2}
$\Xi_2 = \lbrace (\lambda_D,\chi_D) \mid D \in \mathcal D_2 \rbrace$.
\end{proposition}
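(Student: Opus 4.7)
The plan is to show that the map $(\lambda, \chi) \mapsto (\widetilde \lambda, \widetilde \chi)$ from $\Xi'_2$ to $\widehat \Lambda^+$ (which defines $\Xi_2$) sends each element to $(\lambda_D, \chi_D)$ where $D \in \mathcal D_2$ is the color corresponding to $(\lambda, \chi)$ under the chain of bijections in~\S\,\ref{subsec_preparations}. I first reduce to the parabolically induced case $H = KP_u$: the $G$-equivariant smooth morphism $\phi \colon G/H \to G/KP_u$ has irreducible fiber $KP_u/H \simeq P_u/H_u$, so by Propositions~\ref{prop_pullback}(\ref{prop_pullback_b}) and~\ref{prop_lb_pb}, pullback along $\phi$ identifies the respective sets $\mathcal D_2$ and transports $(\lambda_{D'}, \chi_{D'})$ to $(\lambda_{D'}, \chi_{D'}|_H)$. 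Since $\chi_{D'}|_{P_u}$ is trivial while $\chi_{D'}|_K$ coincides with the $\widetilde \chi$ built from $(\lambda,\chi)$ in either setting, the claim for $G/H$ follows from the one for $G/KP_u$.

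Assuming now $H = KP_u$ (so $\mathcal D_3 = \varnothing$ by Remark~\ref{rem_par_ind}), I would exhibit a nonzero $H$-semi-invariant vector of weight $\widetilde \chi$ in $V_G(\widetilde \lambda)^*$. The inclusion $P \supset B^-$ gives $P_u \subset B^-_u$, hence $(V_G(\widetilde \lambda)^*)^{P_u}$ is an $L$-submodule canonically isomorphic to $V_L(\widetilde \lambda)^*$, which splits as $V_{L'}(\lambda)^* \otimes \CC_{-\psi}$ under $L = L'C$ with $\psi = \widetilde \lambda|_C$. A nonzero $v \in [V_{L'}(\lambda)^*]^{(L' \cap CK)}_\chi$ is automatically $C$-semi-invariant of weight $-\psi$; by $CK = C \cdot (L' \cap CK)$ together with the compatibility $\lambda|_{L' \cap C} = -\chi|_{L' \cap C}$, the vector $v$ is $CK$-semi-invariant of weight $\chi_0$, hence $K$-semi-invariant of weight $\widetilde \chi$ and $P_u$-invariant. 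This proves $(\widetilde \lambda, \widetilde \chi) \in \widehat \Lambda^+$.

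Finally, I would identify $\div s_{\widetilde \lambda, \widetilde \chi}$ with $D$. Since $\supp \widetilde \lambda \subset \lbrace \varpi_\alpha \mid \alpha \in \Pi_L \rbrace$, Propositions~\ref{prop_D_in_D(alpha)} and~\ref{prop_Xi1_D1} preclude any contribution from $\mathcal D_1$. The restriction of $s_{\widetilde \lambda, \widetilde \chi}$ to the fiber $\phi_1^{-1}(eP) = P/H \simeq L/K$ is a $(B_L \times H)$-semi-invariant section, and by multiplicity one its pullback to $L' \subset L \subset P$ must be proportional to the function underlying $s_{\lambda, \chi}$; hence its divisor on $L/K$ equals the color $F$ from the chain~(\ref{eqn_chain3}), yielding $\div s_{\widetilde \lambda, \widetilde \chi} = \overline{B \cdot F} = D$. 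By Theorem~\ref{thm_ewm_is_free} this forces $(\widetilde \lambda, \widetilde \chi) = (\lambda_D, \chi_D)$, and combining injectivity of $\tau_L$ with the equality $|\Xi'_2| = |\mathcal D_2|$ from Theorem~\ref{thm_rank} upgrades the map to a bijection onto $\lbrace (\lambda_D, \chi_D) \mid D \in \mathcal D_2 \rbrace$. The main obstacle will be this last step: verifying that the restriction of $s_{\widetilde \lambda, \widetilde \chi}$ to $L'$ is nonvanishing and coincides with the prescribed section, which requires tracing the constructions via~(\ref{eqn_iso_sections}) and invoking the Vinberg--Kimelfeld multiplicity-one property at each intermediate stage.
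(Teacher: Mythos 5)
Your proposal is correct and follows essentially the same strategy as the paper's proof: the same $P_u$-invariants argument inside $V_G(\widetilde\lambda)^*$ establishes $(\widetilde\lambda,\widetilde\chi)\in\widehat\Lambda^+$, and the same combination of Propositions~\ref{prop_pullback}, \ref{prop_lb_pb}, \ref{prop_restriction} with the multiplicity-one property and the disjointness of supports (Remark~\ref{rem_disjoint_supports}) pins the divisor down to~$D$. The only difference is organizational: you first reduce to the parabolically induced case $H=KP_u$ and then compute $\div s_{\widetilde\lambda,\widetilde\chi}$ top-down by restricting to the fiber over~$eP$, whereas the paper keeps $G/H$ throughout and builds a rational section bottom-up by successive pullbacks from $L'/(L'\cap CK)$, verifying its regularity afterwards.
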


\begin{proof}
Take any $D \in \mathcal D_2$ and let $D' \in \mathcal D_B(\phi_1^{-1}(O_1))$, $E \in \mathcal E_2$, $E' \in \mathcal D_{B_L}(L/K)$, $E'' \in \mathcal D_{B_L}(L/CK)$ be the objects corresponding to $D$ via the chains of bijections~(\ref{eqn_chain1}) and~(\ref{eqn_chain3}).
The dominant weights of $L'$ will be considered with respect to the Borel subgroup $B_{L'} = L' \cap B_L$.

\textit{Step}~\newstep.
Put $X =L/CK$ and first regard $X$ as a homogeneous space for the action of $L'$, so that $X \simeq L'/(L'\cap CK)$ as $L'$-varieties.
Then $E''$ may be regarded as a color of~$L'/(L'\cap CK)$, and we let $(\lambda,\chi) = (\lambda_{E''},\chi_{E''}) \in \Lambda^+_{L'} \times \mathfrak X(L' \cap CK)$ be the corresponding biweight.
Let $\mathcal L$ be the $L'$-linearized line bundle on~$X$ together with a section $s \in \Gamma(\mathcal L)$ such that $E'' = \div s$.
Then $s$ is $B_{L'}$-semiinvariant of weight~$\lambda$ and $L' \cap CK$ acts on the fiber of $\mathcal L$ over~$eCK$ via the character~$-\chi$.

\textit{Step}~\newstep.
The finite group $L' \cap C$ acts trivially on~$X$, hence it acts on each fiber of $\mathcal L$ via the same character, which equals the restriction to~$L' \cap C$ of both~$-\lambda$ and~$-\chi$.
We now recall the notation preceding the statement of Theorem~\ref{thm_gen_2}.
Recall the map $\tau_L$ given by~(\ref{eqn_restr}) and put $\widetilde \lambda = \tau_L^{-1}(\lambda)$.
Let $\psi \in \mathfrak X(C)$ be the restriction of $\widetilde \lambda$ to~$C$ and equip $\mathcal L$ with an $L$-linearization by letting $C$ act on each fiber of~$\mathcal L$ via the character~$-\psi$. (Recall that $C$ acts trivially on~$X$.)
Then $CK$ acts on the fiber over $eCK$ via the character $-\chi_0 \in \mathfrak X(CK)$ whose restrictions to $L' \cap CK$ and~$C$ equal $-\chi$ and~$-\psi$, respectively.
Thus we have identified $\mathcal L$, regarded as an $L$-linearized line bundle on~$X$, with $L *_{CK} \CC_{-\chi_0}$ and $s$ with an element of $\Gamma(L *_{CK} \CC_{-\chi_0})^{(B_L)}_{\widetilde \lambda}$.

\textit{Step}~\newstep. \label{D2_step3}
Let $\widetilde \chi$ be the restriction of $\chi_0$ to~$K$.
Proposition~\ref{prop_lb_pb} yields an isomorphism $\phi_3^*\mathcal L \simeq L *_K \CC_{-\widetilde \chi}$ and the $B_L$-weight of $\phi_3^*s$ is~$\widetilde \lambda$.
Thanks to Proposition~\ref{prop_pullback}(\ref{prop_pullback_b}), we have $E' = \div(\phi_3^*s)$, which in particular implies $(\widetilde \lambda, \widetilde \chi) \in \widehat \Lambda^+_L(L/K)$.

\textit{Step}~\newstep.
Equip $L/K$ and $\phi_3^*\mathcal L$ with the trivial action of $P_u$; then $L/K \simeq P/(K P_u)$ as a homogeneous space for~$P$ and $\phi_3^*\mathcal L \simeq P *_{K P_u} \CC_{-\widetilde \chi}$ as a $P$-linearized line bundle on~$L/K$.
Arguments as in Step~\ref{D2_step3} yield an isomorphism $\phi_2^*\phi_3^*\mathcal L \simeq P *_H \CC_{-\widetilde \chi}$ of $P$-linearized line bundles on~$P/H$ and show that the $B_L$-weight of $\phi_2^*\phi_3^*s$ is~$\widetilde \lambda$ and $E = \div \phi_2^*\phi_3^*s$.
In what follows we identify $\phi_2^*\phi_3^*s$ with an element $s_E \in \Gamma(P *_H \CC_{-\widetilde \chi})^{(B_L)}_{\widetilde \lambda}$.

\textit{Step}~\newstep.
It is easy to see that the $P$-linearized line bundle $P *_H \CC_{-\widetilde \chi}$ on $P/H$ can be regarded as a subbundle of the $G$-linearized line bundle $G *_H \CC_{-\widetilde \chi}$.
Furthermore, $s_E$~extends to a $B$-semiinvariant rational section $s_D$ of $G *_H \CC_{-\widetilde \chi}$ of weight $\widetilde \lambda$ that is regular over the open subset $\phi_1^{-1}(O_1) \subset G/H$.
Thanks to Proposition~\ref{prop_restriction}, the divisor of zeros of $s_D$ over $\phi_1^{-1}(O_1)$ is $D'$, which implies $\div s_D = D + \sum \limits_{F \in \mathcal D_1}c_F F$ with $c_F \in \ZZ$.

\textit{Step}~\newstep.
Let $W \subset V_G(\widetilde \lambda)^*$ be the subspace of $P_u$-invariant vectors.
Then $W$ is an $L$-module.
Clearly, $W$ contains the line spanned by a lowest-weight vector of $V_G(\widetilde \lambda)^*$, therefore $W$ contains an $L$-submodule dual to $V_L(\widetilde \lambda)$.
We have seen in Step~\ref{D2_step3} that $(\widetilde \lambda, \widetilde \chi) \in \widehat \Lambda^+_L(L/K)$, therefore $W^{(K)}_{\widetilde \chi} \ne 0$.
Since $H_u \subset P_u$, we have $W^{(K)}_{\widetilde \chi} \subset [V_G(\widetilde \lambda)^*]^{(H)}_{\widetilde \chi}$.
It follows that $[V_G(\widetilde \lambda)^*]^{(H)}_{\widetilde \chi} \ne 0$ and hence $(\widetilde \lambda, \widetilde \chi) \in \widehat \Lambda^+$.
Thus there exists a nonzero section $\widetilde s \in \Gamma(G *_H \CC_{-\widetilde \chi})^{(B)}_{\widetilde \lambda}$.

\textit{Step}~\newstep.
Since $s_D$ and $\widetilde s$ have the same $B$-weight, the quotient $s_D / \widetilde s$ is a $B$-invariant rational function on~$G/H$, which is necessarily constant because $G/H$ has an open $B$-orbit.
Consequently, $s_D$ is proportional to~$\widetilde s$ and hence a regular section of $G *_H \CC_{-\widetilde \chi}$, which implies $c_F \ge 0$ for all $F \in \mathcal D_1$.

\textit{Step}~\newstep.
Assume there is $F \in \mathcal D_1$ with $c_F > 0$.
Then $\div s_D - F$ is an effective $B$-stable divisor on $G/H$, therefore $(\widetilde \lambda - \lambda_F, \widetilde \chi - \chi_F) \in \widehat \Lambda^+$ by Theorem~\ref{thm_ewm_is_free}.
By construction we get $\supp \widetilde \lambda \cap \supp \lambda_F = \varnothing$, which implies $\widetilde \lambda - \lambda_F \notin \Lambda^+_G$, a contradiction.
It follows that $c_F = 0$ for all $F \in \mathcal D_1$, hence $\div s_D = D$ and $(\lambda_D, \chi_D) = (\widetilde \lambda, \widetilde \chi)$, which completes the proof.
\end{proof}

Propositions~\ref{prop_Xi1_D1} and~\ref{prop_Xi2_D2} yield

\begin{corollary} \label{crl_Xi3_D3}
$\Xi_3 = \lbrace (\lambda_D,\chi_D) \mid D \in \mathcal D_3 \rbrace$.
\end{corollary}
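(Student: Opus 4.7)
The plan is to deduce this corollary directly as a formal set-theoretic consequence of the two preceding propositions, with essentially no further geometric input required. The key ingredients are already in place: Theorem~\ref{thm_ewm_is_free} furnishes the bijection $D \mapsto (\lambda_D, \chi_D)$ from $\mathcal D$ onto $\Xi$, the set of indecomposable elements of $\widehat \Lambda^+$; Propositions~\ref{prop_Xi1_D1} and~\ref{prop_Xi2_D2} identify the images of $\mathcal D_1$ and $\mathcal D_2$ under this bijection as $\Xi_1$ and $\Xi_2$, respectively; and by construction from \S\,\ref{subsec_preparations} the sets $\mathcal D_1, \mathcal D_2, \mathcal D_3$ form a disjoint union whose union is all of $\mathcal D$.

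The argument itself is a one-liner. By definition $\Xi_3 = \Xi \setminus \Xi_{12} = \Xi \setminus (\Xi_1 \cup \Xi_2)$. Using the bijection of Theorem~\ref{thm_ewm_is_free} together with Propositions~\ref{prop_Xi1_D1} and~\ref{prop_Xi2_D2}, we rewrite this as
\[
\Xi_3 = \lbrace (\lambda_D, \chi_D) \mid D \in \mathcal D \rbrace \setminus \lbrace (\lambda_D, \chi_D) \mid D \in \mathcal D_1 \cup \mathcal D_2 \rbrace.
\]
Since the map $D \mapsto (\lambda_D, \chi_D)$ is injective, this difference on the right-hand side equals $\lbrace (\lambda_D, \chi_D) \mid D \in \mathcal D \setminus (\mathcal D_1 \cup \mathcal D_2) \rbrace$, and the latter is precisely $\lbrace (\lambda_D, \chi_D) \mid D \in \mathcal D_3 \rbrace$ by the disjoint union decomposition set up in~\S\,\ref{subsec_preparations}.

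There is no genuine obstacle here; the corollary is purely bookkeeping once Propositions~\ref{prop_Xi1_D1} and~\ref{prop_Xi2_D2} have been established. The only point worth flagging in the write-up is the appeal to injectivity of the map $D \mapsto (\lambda_D, \chi_D)$ (coming from Theorem~\ref{thm_ewm_is_free}), which is what allows passing from a set-theoretic difference of images to the image of the set-theoretic difference.
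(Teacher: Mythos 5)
Your proposal is correct and follows exactly the same route as the paper, which simply states that Propositions~\ref{prop_Xi1_D1} and~\ref{prop_Xi2_D2} yield the corollary; you have merely spelled out the bookkeeping (bijectivity from Theorem~\ref{thm_ewm_is_free} plus the disjoint decomposition $\mathcal D = \mathcal D_1 \cup \mathcal D_2 \cup \mathcal D_3$) that the paper leaves implicit.
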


\begin{proof}[{Proof of Theorem~\textup{\ref{thm_gen_3}}}]
(\ref{thm_gen_3_a})
Fix $\mu \in \Xi'_3$ and let $F_\mu \in \mathcal D_{B_S}(P_u/H_u)$ be the corresponding element (see Remarks~\ref{rem_Montagard} and~\ref{rem_sm_B-stable}).
Let $E'_\mu \in \mathcal D_{B_L}(\phi_2^{-1}(O_2))$, $E_\mu \in \mathcal E_3$, $D'_\mu \in \mathcal D_B(\phi_1^{-1}(O_1))$, $D_\mu \in \mathcal D_3$ be the objects corresponding to $F_\mu$ via the chains of bijections~(\ref{eqn_chain2}) and~(\ref{eqn_chain1}).
In view of Corollary~\ref{crl_Xi3_D3} it suffices to show that the required bijection $\Xi'_3 \to \Xi_3$ is provided by the map $\mu \mapsto (\lambda_{D_\mu},\chi_{D_\mu})$.

Fix a character $\widetilde \mu \in \mathfrak X(T)$ whose restriction to~$T \cap B_S$ equals~$\mu$.

Let $f''\in \CC[P_u/H_u]^{(B_S)}_\mu$ be such that $\div f'' = F_\mu$.
We extend $f''$ to a function $f' \in \CC[\phi_2^{-1}(O_2)]^{(B_L)}_{\widetilde \mu}$ by setting $f'(b^{-1}x) = \mu(b)f''(x)$ for all $b \in B_L$ and $x \in P_u/H_u$.
This extension is well defined because $B_L \cap K = B_S$.
Then $\div f' = E'_\mu$ by Proposition~\ref{prop_restriction}(\ref{prop_restriction_a}).
Next consider the open subset $X = B(\phi_2^{-1}(O_2)) \subset G/H$ and extend $f'$ to a function $f \in \CC[X]^{(B)}_{\widetilde \mu}$ by setting $f(b^{-1}x) = \mu(b)f'(x)$ for all $b \in B$ and $x \in \phi_2^{-1}(O_2)$.
Again, this extension is well defined because $B \cap P = B_L$.
Then $\div f = D_\mu \cap X$ thanks to Proposition~\ref{prop_restriction}.
In what follows, we shall regard $f$ as a rational function on $G/H$.
Observe that $(G/H) \setminus X$ is the union of all colors in $\mathcal D_{12} = \mathcal D_1 \cup \mathcal D_2$, therefore $\div f = D_\mu - \sum \limits_{D \in \mathcal D_{12}}a_D D$ for some $a_D \in \ZZ$.
Now for every $D \in \mathcal D_{12}$ let $\mathcal L_D$ be the $G$-linearized line bundle on $G/H$ together with a section $s_D \in \Gamma(\mathcal L_D)$ such that $D = \div s_D$.
Then the section $s = f\prod\limits_{D \in \mathcal D_{12}} s_D^{a_D}$ of the line bundle $\bigotimes\limits_{D \in \mathcal D_{12}} \mathcal L_D^{\otimes a_D}$ satisfies $\div s = D_\mu$, which yields $(\lambda_{D_\mu}, \chi_{D_\mu}) = (\widetilde \mu, 0 ) + \sum \limits_{D \in \mathcal D_{12}}a_D(\lambda_D,\chi_D)$.
Applying Propositions~\ref{prop_Xi1_D1} and~\ref{prop_Xi2_D2}, we get the claim.

(\ref{thm_gen_3_b})
For every $\alpha \in \Pi$, let $c(\mu,\alpha)$ be the coefficient at $\varpi_\alpha$ in~$\lambda_{D_\mu}$.
If $\alpha \notin \Pi_{12}$ then $c(\mu,\alpha)$ is uniquely determined from~$\widetilde \mu$.
In what follows we assume $\alpha \in \Pi_{12}$.
Then Propositions~\ref{prop_localization}(\ref{prop_localization_a}), \ref{prop_D_in_D(alpha)}, and~\ref{prop_Foschi} imply that
\[
c(\mu,\alpha) =
\begin{cases}
1 & \text{if} \ \alpha \in \Sigma, \ |\mathcal D_{12}(\alpha)| = 1, \ \text{and} \ D_\mu \in \mathcal D(\alpha); \\
0 & \text{otherwise}.
\end{cases}
\]
According to Proposition~\ref{prop_localization}(\ref{prop_localization_b}), in the case $\alpha \in \Sigma$ the condition $D_\mu \in \mathcal D(\alpha)$ is equivalent to $\rho_{D_\mu}(\alpha)=1$.
As a byproduct of the arguments in~(\ref{thm_gen_3_a}) it follows that $\rho_{D_\mu}$ coincides with the extension of $\rho_\mu$ to $\Lambda$ via the inclusion $\Lambda_S(\mathfrak p_u / \mathfrak h_u)^\vee \hookrightarrow \Lambda^\vee$, which enables us to conclude that $c(\mu,\alpha) = \delta(\mu, \alpha)$.

(\ref{thm_gen_3_c})
Assume that the system of linear equations $\lbrace c(\mu,\alpha) = \delta(\mu,\alpha) \mid \alpha \in \Pi_{12} \rbrace$ in the set of indeterminates $\lbrace a_D \mid D \in \mathcal D_{12} \rbrace$ has at least two different solutions.
Then there is a nontrivial (with at least one nonzero element) collection of integers $\lbrace b_D \mid D \in \mathcal D_{12} \rbrace$ such that
\begin{equation} \label{eqn_equation}
\sum \limits_{D \in \mathcal D_{12}} b_D(\lambda_D, \chi_D) = (0,\chi)
\end{equation}
for some $\chi \in \mathfrak X(K)$.
Remark~\ref{rem_disjoint_supports}, Proposition~\ref{prop_Xi1_D1}, and the definition of~$\Xi_1$ immediately imply $b_D = 0$ for all $D \in \mathcal D_1$.
Next, given $D \in \mathcal D_2$, let $E \in \mathcal D_{B_{L'}}(L'/(L' \cap CK))$ correspond to $D$ via the chains of bijections~(\ref{eqn_chain1}) and~(\ref{eqn_chain3}) and let $(\overline \lambda_D, \overline \chi_D)$ be the indecomposable element of $\widehat \Lambda^+_{L'}(L'/(L' \cap CK))$ corresponding to~$E$.
Recall from the last conclusion of the proof of Proposition~\ref{prop_Xi2_D2} that $\overline \lambda_D = \tau_L(\lambda_D)$; then equation~(\ref{eqn_equation}) induces an equation of the form
\begin{equation} \label{eqn_equation2}
\sum \limits_{D \in \mathcal D_2} b_D(\overline \lambda_D, \overline \chi_D) = (0, \overline \chi)
\end{equation}
for some $\overline \chi \in \mathfrak X(L' \cap CK)$.
As the collection $\lbrace b_D \mid D \in \mathcal D_2 \rbrace$ is nontrivial and the indecomposable elements of $\widehat \Lambda^+_{L'}(L'/(L' \cap CK))$ are linearly independent, (\ref{eqn_equation2}) is possible only if $\overline \chi$ is of infinite order in $\mathfrak X(L' \cap CK)$.
Applying Proposition~\ref{prop_proper_parabolic} we find that $L' \cap CK$ is contained in a proper parabolic subgroup of~$L'$ and hence $K$ is contained in a proper parabolic subgroup of~$L$, a contradiction.
\end{proof}

\begin{proof}[Proof of Proposition~\textup{\ref{prop_alpha_in_Sigma1}}]
Part~(\ref{prop_alpha_in_Sigma1_a}) follows from the inclusion~$\Sigma \subset \Lambda$; part~(\ref{prop_alpha_in_Sigma1_b}) is implied by Propositions~\ref{prop_localization} and~\ref{prop_D_in_D(alpha)}.
\end{proof}

To prove Proposition~\ref{prop_alpha_in_Sigma2}, we shall need several auxiliary results.

Let $\alpha \in \Pi$ and let $U_{-\alpha}$ denote the one-dimensional root unipotent subgroup in~$B^-$ with Lie algebra~$\mathfrak g_{-\alpha}$.
Since $U_{-\alpha}B$ is dense in~$P_\alpha$, it follows that a color $D \in \mathcal D$ is $P_\alpha$-unstable if and only if $D$ is $U_{-\alpha}$-unstable.
Note that $U_{-\alpha}$ naturally acts on $P/H$ and, if $\alpha \in \Pi \setminus \Pi_L$, on~$P_u/H_u$.

\begin{lemma} \label{lemma_unstable}
Suppose $D \in \mathcal D \setminus \mathcal D_1$ and let $E \in \mathcal E$ correspond to~$D$ via the chain~\textup{(\ref{eqn_chain1})}.
Then the following conditions are equivalent.
\begin{enumerate}[label=\textup{(\arabic*)},ref=\textup{\arabic*}]
\item \label{lemma_unstable_1}
$D$ is $U_{-\alpha}$-unstable.

\item \label{lemma_unstable_2}
$E$ is $U_{-\alpha}$-unstable.
\end{enumerate}
Moreover, if $\alpha \in \Pi \setminus \Pi_L$, $D \in \mathcal D_3$, and $F \in \mathcal F$ corresponds to~$E$ via the chain~\textup{(\ref{eqn_chain2})} then the above conditions are equivalent to the following one:
\begin{enumerate}[label=\textup{(\arabic*)},ref=\textup{\arabic*}]
\setcounter{enumi}{2}
\item \label{lemma_unstable_3}
$F$ is $U_{-\alpha}$-unstable.
\end{enumerate}
\end{lemma}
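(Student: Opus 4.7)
The plan is to treat both equivalences by a common template: in each case, one implication is immediate because $U_{-\alpha}$ preserves the relevant ambient fiber, while the reverse exploits how $U_{-\alpha}$ interacts with the Borel subgroup of the fibration. From \S\,\ref{subsec_preparations} I will use that $\phi_1^{-1}(O_1)$ is a $B$-bundle over $O_1 = B/B_L$ with fiber $P/H$, so that $D \cap \phi_1^{-1}(O_1) = B \cdot E$ and hence $D = \overline{B E}$; analogously, under the hypotheses of (\ref{lemma_unstable_2})$\Leftrightarrow$(\ref{lemma_unstable_3}), the description $\phi_2^{-1}(O_2) = B_L \cdot (P_u/H_u)$ yields $E = \overline{B_L F}$.

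For (\ref{lemma_unstable_1})$\Leftrightarrow$(\ref{lemma_unstable_2}): the hypothesis $P \supset B^-$ forces $U_{-\alpha} \subset P$ for every $\alpha \in \Pi$, so $U_{-\alpha}$ preserves $P/H$, making (\ref{lemma_unstable_1})$\Rightarrow$(\ref{lemma_unstable_2}) immediate via $U_{-\alpha} E = U_{-\alpha}(D \cap P/H) = D \cap P/H = E$. For the reverse direction I will exploit that both multiplication maps $U_{-\alpha} \times B \to P_\alpha$ and $B \times U_{-\alpha} \to P_\alpha$ are injective of full dimension, hence open immersions whose images are open dense in $P_\alpha = \langle B, U_{-\alpha} \rangle$ and meet in an open dense subset. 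For each fixed $u \in U_{-\alpha}$ this furnishes a dense open $B^\circ \subset B$ on which $ub$ admits a decomposition $b' u'$ with $b' \in B$, $u' \in U_{-\alpha}$; the assumption $U_{-\alpha} E = E$ then yields $u \cdot be' = b' u' e' \in BE$ for all $b \in B^\circ$ and $e' \in E$, and taking closures gives $uD \subset D$, which an irreducibility-plus-dimension count upgrades to $uD = D$.

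For (\ref{lemma_unstable_2})$\Leftrightarrow$(\ref{lemma_unstable_3}) (under $\alpha \in \Pi \setminus \Pi_L$ and $D \in \mathcal D_3$), the key observation I will prove is that $B_L$ actually normalizes $U_{-\alpha}$. Decomposing $B_L = T \cdot (B_L)_u$ and noting that $T$ normalizes every root subgroup, this reduces to checking $[\mathfrak g_\beta, \mathfrak g_{-\alpha}] = 0$ for every positive root $\beta$ of $L$; but $\beta - \alpha$ has coefficient $-1$ on $\alpha$ together with nonnegative coefficients on $\Pi_L$ (not all zero, as $\beta \neq 0$), so $\beta - \alpha$ cannot be a root of any sign. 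Given this, (\ref{lemma_unstable_2})$\Rightarrow$(\ref{lemma_unstable_3}) is immediate from $U_{-\alpha} \subset P_u$ preserving $P_u/H_u$, while for the converse the identity $u b_L = b_L \cdot (b_L^{-1} u b_L)$ with $b_L^{-1} u b_L \in U_{-\alpha}$ gives $U_{-\alpha} \cdot B_L F \subset B_L F$ directly from $U_{-\alpha} F = F$, and closure then yields $U_{-\alpha} E \subset E$.

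The step I expect to be the main obstacle is the generic-decomposition argument for (\ref{lemma_unstable_2})$\Rightarrow$(\ref{lemma_unstable_1}): here the clean normalization trick fails (since $B$ does not normalize $U_{-\alpha}$ when $\alpha \in \Pi_L$), and I must verify carefully both that $B^\circ$ is genuinely dense open in $B$ and that the set-theoretic inclusion $u \cdot B^\circ E \subset BE$ on a dense open locus upgrades, upon taking closures, to the strict equality $uD = D$ via the right irreducibility and dimension arguments.
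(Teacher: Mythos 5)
Your argument is correct, but it takes a more hands-on route than the paper. The paper passes to the preimage of the open $P_\alpha$-orbit $\widetilde O_1 \subset G/P$ and applies the fiber-restriction correspondence for homogeneous bundles to the group $P_\alpha$ itself: this converts the question at one stroke into ``$D$ is $P_\alpha$-stable iff $E$ is $(P\cap P_\alpha)$-stable'', and then both sides are translated into $U_{-\alpha}$-stability using that $U_{-\alpha}B$ is dense in $P_\alpha$ and $U_{-\alpha}B_L$ is dense in $P\cap P_\alpha$ (for the second equivalence $P\cap P_\alpha = U_{-\alpha}B_L$ exactly, which is the group-level form of your observation that $B_L$ normalizes $U_{-\alpha}$ when $\alpha \notin \Pi_L$). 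You instead work directly with $U_{-\alpha}$ via the closure descriptions $D=\overline{BE}$ and $E=\overline{B_LF}$ together with a generic factorization $ub=b'u'$; this is more elementary and self-contained, but it forces you to do the closure and genericity bookkeeping yourself, which you carry out correctly. Two small points. First, your implication labels are systematically reversed: the computation $U_{-\alpha}(D\cap P/H)=D\cap P/H$ assumes $D$ is \emph{stable} and concludes $E$ is stable, which in the ``unstable'' phrasing is (2)$\Rightarrow$(1), not (1)$\Rightarrow$(2); since you prove both directions this is harmless. Second, the density of the overlap $U_{-\alpha}B\cap BU_{-\alpha}$ in $P_\alpha$ only guarantees the factorization $ub=b'u'$ for \emph{generic} $u$, not for every fixed $u$; to close this either note that $BU_{-\alpha}B=P_\alpha$ (Bruhat decomposition of the minimal parabolic), so that $uB\cap BU_{-\alpha}\ne\varnothing$ for every $u$, or observe that generic $u$ suffices because $\lbrace u\in U_{-\alpha}\mid uD=D\rbrace$ is a closed subgroup of $U_{-\alpha}$.
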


\begin{proof}
Let $\widetilde O_1 \supset O_1$ be the open $P_\alpha$-orbit in $G/P$ and let $D'$ be the restriction of $D$ to $\phi_1^{-1}(\widetilde O_1)$.
Then $D' \cap \phi_1^{-1}(eP) = E$ and we find that $D'$ (and hence~$D$) is $P_\alpha$-stable if and only if $E$ is ($P \cap P_\alpha$)-stable.
Since $U_{-\alpha} B_L$ is open in $P \cap P_\alpha$, we get the equivalence of (\ref{lemma_unstable_1}) and~(\ref{lemma_unstable_2}).
Now suppose $\alpha \in \Pi \setminus \Pi_L$ and $D \in \mathcal D_3$.
Then $U_{-\alpha} \subset P_u$ and $P \cap P_\alpha = U_{-\alpha}B_L$.
Let $\widetilde O_2 \supset O_2$ be the open $U_{-\alpha}B_L$-orbit in~$L/K \simeq P/(K P_u)$ and let $E'$ be the restriction of $E$ to $\phi_2^{-1}(\widetilde O_2)$.
Then $E' \cap \phi_2^{-1}(eK) = F$ and we find that $E'$ (and hence~$E$) is $U_{-\alpha}B_L$-stable if and only if $F$ is $U_{-\alpha}B_S$-stable, whence the equivalence of~(\ref{lemma_unstable_2}) and~(\ref{lemma_unstable_3}).
\end{proof}

In what follows we suppose that $\alpha \in \Pi \setminus \Pi_L$.
Recall from \S\,\ref{sect_main_results} that $I_\alpha$ is the ideal in $\mathfrak p_u$ generated by $\mathfrak g_{-\alpha}$.

\begin{lemma} \label{lemma_acts_nontrivially}
The following conditions are equivalent:
\begin{enumerate}[label=\textup{(\arabic*)},ref=\textup{\arabic*}]
\item
$I_\alpha \not\subset \mathfrak h_u$.

\item
$U_{-\alpha}$ acts nontrivially on~$P_u/H_u$.
\end{enumerate}
\end{lemma}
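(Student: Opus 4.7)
The plan is to translate the group-theoretic condition on the left action of $U_{-\alpha}$ on $P_u/H_u$ into a Lie-algebra condition by taking normal closures, and then invoke the correspondence between connected subgroups of the unipotent group $P_u$ and their Lie algebras.

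First I would observe that, since $\alpha \in \Pi \setminus \Pi_L$, the root subspace $\mathfrak g_{-\alpha}$ lies in $\mathfrak p_u$, so $U_{-\alpha} \subset P_u$, and hence $U_{-\alpha}$ acts on $P_u/H_u$ by left translation. This action is trivial if and only if, for every $u \in U_{-\alpha}$ and every $g \in P_u$, the element $g^{-1}ug$ belongs to $H_u$. Equivalently, letting $N$ denote the subgroup of $P_u$ generated by all $P_u$-conjugates of $U_{-\alpha}$, one has: the action is trivial iff $N \subset H_u$.

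Next I would identify $N$ and $H_u$ via their Lie algebras. Since $P_u$ is connected and $U_{-\alpha}$ is connected, $N$ is a connected subgroup of $P_u$, and by construction it is normal in $P_u$; hence $\mathfrak n := \operatorname{Lie}(N)$ is an $\operatorname{Ad}(P_u)$-invariant subspace of $\mathfrak p_u$, i.e. an ideal of $\mathfrak p_u$, which contains $\mathfrak g_{-\alpha}$. Conversely, the connected normal subgroup $N_\alpha$ of $P_u$ associated to the ideal $I_\alpha$ of $\mathfrak p_u$ contains $U_{-\alpha}$ together with all its $P_u$-conjugates, so $N \subset N_\alpha$ and therefore $\mathfrak n \subset I_\alpha$. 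Combined with $\mathfrak g_{-\alpha} \subset \mathfrak n$ and the definition of $I_\alpha$ as the smallest ideal of $\mathfrak p_u$ containing $\mathfrak g_{-\alpha}$, this gives $\mathfrak n = I_\alpha$.

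Finally, $H_u$ is a connected closed subgroup of $P_u$ (the unipotent radical of $H$ is always connected), so the Lie-algebra/closed-connected-subgroup correspondence inside the unipotent group $P_u$ gives $N \subset H_u$ if and only if $\mathfrak n \subset \mathfrak h_u$, i.e. iff $I_\alpha \subset \mathfrak h_u$. Putting the steps together yields the claimed equivalence. The only mildly delicate point is the identification $\operatorname{Lie}(N) = I_\alpha$; everything else is formal, once one has set up the normal closure correctly.
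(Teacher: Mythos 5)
Your argument is correct and follows essentially the same route as the paper: both proofs pass to the normal closure $N_\alpha$ of $U_{-\alpha}$ in $P_u$, identify its Lie algebra with $I_\alpha$, and use that a normal subgroup of $P_u$ acts trivially on $P_u/H_u$ exactly when it is contained in $H_u$. Your write-up merely spells out in more detail the identification $\operatorname{Lie}(N)=I_\alpha$, which the paper asserts without proof.
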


\begin{proof}
Let $N_{\alpha}$ be the normal subgroup of $P_u$ generated by $U_{-\alpha}$.
Then $I_\alpha$ is precisely the Lie algebra of $N_{\alpha}$.
If $U_{-\alpha}$ acts trivially on~$P_u/H_u$ then so does~$N_\alpha$, therefore $N_\alpha \subset H_u$ and hence $I_\alpha \subset \mathfrak h_u$, a contradiction.
Conversely, if $I_\alpha \subset \mathfrak h_u$ then $N_\alpha \subset H_u$.
Since $N_\alpha$ is a normal subgroup of~$P_u$, it acts trivially on $P_u/H_u$, hence so does~$U_{-\alpha}$.
\end{proof}

\begin{lemma} \label{lemma_alpha_in_Sigma}
The following conditions are equivalent:
\begin{enumerate}[label=\textup{(\arabic*)},ref=\textup{\arabic*}]
\item
$\alpha \in \Sigma$.

\item
$\mathcal F$ contains a $U_{-\alpha}$-unstable element \textup(which is automatically unique\textup).
\end{enumerate}
\end{lemma}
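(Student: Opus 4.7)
The plan is to compute $|\mathcal{D}(\alpha)|$ by splitting it along the decomposition $\mathcal{D} = \mathcal{D}_1 \cup \mathcal{D}_2 \cup \mathcal{D}_3$ from~\S\ref{subsec_preparations}, and then apply Proposition~\ref{prop_localization}(\ref{prop_localization_a}), which says that $\alpha \in \Sigma$ if and only if $|\mathcal{D}(\alpha)| = 2$, with $|\mathcal{D}(\alpha)| \le 2$ in all cases. Throughout I use the fact, recorded just before Lemma~\ref{lemma_unstable}, that a $B$-stable divisor is $P_\alpha$-unstable if and only if it is $U_{-\alpha}$-unstable.

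First I would show that $\mathcal{D}_1$ contributes exactly one element to $\mathcal{D}(\alpha)$. By Proposition~\ref{prop_ewm_for_G/P}, the colors of $G/P$ are the divisors $E_\beta$ with $\lambda_{E_\beta} = \varpi_\beta$ for $\beta \in \Pi \setminus \Pi_L$. Applying Proposition~\ref{prop_D_in_D(alpha)} in $G/P$, we have $E_\beta \in \mathcal{D}_B(G/P)(\alpha)$ iff $\varpi_\alpha \in \operatorname{supp} \varpi_\beta$, i.e., iff $\beta = \alpha$; since $\phi_1$ is $G$-equivariant, the pullback $\phi_1^{-1}(E_\beta)$ is $U_{-\alpha}$-unstable iff $E_\beta$ is. So $|\mathcal{D}(\alpha) \cap \mathcal{D}_1| = 1$.

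Next I would argue that $\mathcal{D}_2$ contributes nothing. Since $\alpha \in \Pi \setminus \Pi_L$, we have $U_{-\alpha} \subset P_u$, and the map $\phi_2 \colon P/H \to L/K = P/(KP_u)$ is $P$-equivariant with $P$ acting on $L/K$ through its projection $P \to L$. Hence $U_{-\alpha}$ acts trivially on $L/K$, so every $E \in \mathcal{E}_2$ (being a pullback from $L/K$) is $U_{-\alpha}$-stable. By the equivalence (\ref{lemma_unstable_1})$\Leftrightarrow$(\ref{lemma_unstable_2}) of Lemma~\ref{lemma_unstable}, the corresponding element of $\mathcal{D}_2$ is $U_{-\alpha}$-stable, so $\mathcal{D}(\alpha) \cap \mathcal{D}_2 = \varnothing$.

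Finally, by the full Lemma~\ref{lemma_unstable}, the bijection $\mathcal{D}_3 \leftrightarrow \mathcal{F}$ preserves $U_{-\alpha}$-(un)stability, so $|\mathcal{D}(\alpha) \cap \mathcal{D}_3|$ equals the number of $U_{-\alpha}$-unstable elements of $\mathcal{F}$. Combining the three contributions gives
\[
|\mathcal{D}(\alpha)| = 1 + |\{F \in \mathcal{F} \mid F \text{ is } U_{-\alpha}\text{-unstable}\}|.
\]
Since $|\mathcal{D}(\alpha)| \le 2$, this count of $U_{-\alpha}$-unstable elements in $\mathcal{F}$ is automatically at most one, and it equals one precisely when $|\mathcal{D}(\alpha)| = 2$, that is, when $\alpha \in \Sigma$. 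This yields the desired equivalence, together with the claimed uniqueness. There is no real obstacle here: the key computational input (the one contribution from $\mathcal{D}_1$ and the zero contribution from $\mathcal{D}_2$) is dictated by the structure of the two fibrations $\phi_1$ and $\phi_2$, and everything else is bookkeeping via Lemma~\ref{lemma_unstable} and the basic localization results already recorded.
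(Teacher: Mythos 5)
Your proof is correct and follows essentially the same route as the paper's: split $\mathcal D(\alpha)$ along $\mathcal D = \mathcal D_1 \cup \mathcal D_2 \cup \mathcal D_3$, show that $\mathcal D_1 \cup \mathcal D_2$ contributes exactly one $U_{-\alpha}$-unstable color, and conclude via Proposition~\ref{prop_localization}(\ref{prop_localization_a}) and Lemma~\ref{lemma_unstable}. The only (minor) divergence is the $\mathcal D_2$ step: the paper deduces $\mathcal D(\alpha) \cap \mathcal D_2 = \varnothing$ from the computed biweights (Propositions~\ref{prop_Xi2_D2} and~\ref{prop_D_in_D(alpha)} together with Remark~\ref{rem_disjoint_supports}), whereas you observe directly that $U_{-\alpha} \subset P_u$ acts trivially on $L/K \simeq P/(KP_u)$, so pullbacks along $\phi_2$ are automatically $U_{-\alpha}$-stable --- both arguments are valid, and yours bypasses the heavier Proposition~\ref{prop_Xi2_D2}.
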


\begin{proof}
It follows from Propositions~\ref{prop_D_in_D(alpha)}, \ref{prop_Xi1_D1}, \ref{prop_Xi2_D2} and Remark~\ref{rem_disjoint_supports} that $U_{-\alpha}$ moves exactly one color in $\mathcal D_1 \cup \mathcal D_2$.
Making use of Proposition~\ref{prop_localization}(\ref{prop_localization_a}), we find that $\alpha \in \Sigma$ if and only if $U_{-\alpha}$ moves exactly one color in~$\mathcal D_3$.
It remains to apply Lemma~\ref{lemma_unstable}.
\end{proof}

\begin{proof}[Proof of Proposition~\textup{\ref{prop_alpha_in_Sigma2}}]
Part~(\ref{prop_alpha_in_Sigma2_a}) follows by combining Lemmas~\ref{lemma_alpha_in_Sigma} and~\ref{lemma_acts_nontrivially}.

(\ref{prop_alpha_in_Sigma2_b})
By Lemma~\ref{lemma_acts_nontrivially}, the first condition says that $U_{-\alpha}$ acts nontrivially on~$P_u/H_u$, hence on $\CC[P_u/H_u]$.
The second condition means that $U_{-\alpha}$ commutes with~$S'$ and hence is normalized by~$S$.
Recall from Remark~\ref{rem_Montagard} that $P_u/H_u \simeq \mathfrak p_u/\mathfrak h_u$ as $S$-varieties.
For every $\mu \in \Lambda^+_S(P_u/H_u)$ fix a nonzero function $f_\mu \in \CC[P_u/H_u]^{(B_S)}_\mu$.
Since every simple $S$-submodule in $\CC[P_u/H_u]$ equals the linear span of the $S'$-orbit of a highest-weight vector, it follows that there exists $\nu \in \Lambda^+_S(P_u/H_u)$ such that $f_\nu$ is $U_{-\alpha}$-unstable.
As $f_\nu$ is proportional to the product of several functions in $\lbrace f_\mu \mid \mu \in \Xi'_3\rbrace$, we find that $f_\mu$ is $U_{-\alpha}$-unstable for some~$\mu \in \Xi'_3$.
Then $F = \div f_\mu$ lies in~$\mathcal F$ by Remark~\ref{rem_sm_B-stable}, and $F$ is $U_{-\alpha}$-unstable since $U_{-\alpha}$ has no nontrivial characters.
It remains to apply Lemma~\ref{lemma_alpha_in_Sigma}.
\end{proof}

\section{The strongly solvable case}
\label{sect_strongly_solvable}

In this subsection we obtain a new proof of the main result of~\cite{AvdG} by deducing it from the main results of this paper.

We retain the notation of \S\,\ref{sect_main_results} and assume $P = B^-$, so that $H$ is strongly solvable.
Then $K = S$ and so $\mathfrak X(H)$ is identified with $\mathfrak X(S)$.

According to Theorem~\ref{thm_criterion_spherical}, $H$ is spherical if and only if $\mathfrak p_u / \mathfrak h_u$ is a spherical $S$-module; in what follows we assume that these conditions are fulfilled.
Let $\iota \colon \mathfrak X(T) \to \mathfrak X(S)$ be the character restriction map.

A root $\alpha \in \Delta^+$ is said to be \textit{active} if $\mathfrak g_{-\alpha} \not \subset \mathfrak h$.
Let $\Psi \subset \Delta^+$ be the set of all active roots.
Let $\Phi \subset \mathfrak X(S)$ be the set of weights for the action of $S$ on $(\mathfrak p_u / \mathfrak h_u)^*$.
Then $\Phi = \iota(\Psi) = \Xi'_3$.

According to~\cite[Proposition~3]{Avd_solv}, for every $\alpha \in \Psi$ there exists a unique simple root $\pi(\alpha) \in \Supp \alpha$ with the following property:
if $\alpha = \beta + \gamma$ for some $\beta, \gamma \in \Delta^+$ then $\beta \in \Psi$ \textup(resp. $\gamma \in \Psi$\textup) if and only if $\pi(\alpha) \notin \Supp \beta$ \textup(resp. $\pi(\alpha) \notin \Supp \gamma$\textup).
The above property yields a well-defined map $\pi \colon \Psi \to \Pi$.

For every $\varphi \in \Phi$, we put $\lambda_\varphi = \sum \limits_{\alpha \in \pi(\Psi \cap \iota^{-1}(\varphi))}\varpi_\alpha$.

\begin{theorem}[{\cite[Theorem~4]{AvdG}; see also \cite[Proposition~5.22]{Avd_solv_inv}}] \label{thm_AG}
The monoid $\widehat \Lambda^+$ is freely generated by all the elements $(\varpi_\alpha, -\iota(\varpi_\alpha))$ with $\alpha \in \Pi$ and all the elements $(\lambda_\varphi, - \iota(\lambda_\varphi) + \varphi)$ with $\varphi \in \Phi$.
\end{theorem}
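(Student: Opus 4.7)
The plan is to specialize Theorems~\ref{thm_rank}--\ref{thm_gen_3} together with Propositions~\ref{prop_alpha_in_Sigma1} and~\ref{prop_alpha_in_Sigma2} to the present setting, where $L = T$, $K = S \subset T$, $\Pi_L = \varnothing$, and $\mathfrak p_u = \mathfrak u^-$. First I would identify the pieces in the decomposition $\Xi = \Xi_1 \cup \Xi_2 \cup \Xi_3$. Theorem~\ref{thm_gen_1} immediately gives $\Xi_1 = \lbrace (\varpi_\alpha, -\iota(\varpi_\alpha)) \mid \alpha \in \Pi \rbrace$, since here $\overline \varpi_\alpha$ coincides with $\iota(\varpi_\alpha)$. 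Because $L' = \lbrace e \rbrace$, the monoid $\widehat \Lambda^+_{L'}(L'/(L' \cap CK))$ is trivial, hence $\Xi'_2 = \varnothing$ and Theorem~\ref{thm_gen_2} yields $\Xi_2 = \varnothing$; consequently $\Xi_{12} = \Xi_1$, $\Pi_{12} = \Pi$, and $|\Xi_{12}(\alpha)| = 1$ for every $\alpha \in \Pi$. Sphericity of the $S$-module $\mathfrak u^-/\mathfrak h_u$ together with commutativity of $S$ forces $\iota$ to restrict to a bijection $\Psi \to \Phi$, so the indexing set $\Xi'_3$ is naturally identified with~$\Phi$.

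I next turn to Theorem~\ref{thm_gen_3}. Since $T$ has no proper parabolic subgroup, part~(\ref{thm_gen_3_c}) applies and the coefficients $a_{\varphi,\alpha}$ below are uniquely determined; hence for each $\varphi \in \Phi$ the corresponding element of $\Xi_3$ has the form
\[
(\lambda_\varphi, \chi_\varphi) = (\widetilde \varphi, 0) + \sum_{\alpha \in \Pi} a_{\varphi, \alpha}(\varpi_\alpha, -\iota(\varpi_\alpha)),
\]
where $\widetilde \varphi \in \mathfrak X(T)$ is any lift of~$\varphi$, and part~(\ref{thm_gen_3_b}) asserts that the coefficient of $\varpi_\alpha$ in~$\lambda_\varphi$ equals $\delta(\varphi, \alpha)$. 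Choosing $\widetilde \varphi$ so that $\iota(\widetilde \varphi) = \varphi$ (for instance $\widetilde \varphi = \alpha_\varphi$, the unique active root with $\iota(\alpha_\varphi) = \varphi$), the equality $\lambda_\varphi - \widetilde \varphi = \sum_\alpha a_{\varphi,\alpha}\varpi_\alpha$ immediately yields $\chi_\varphi = -\iota(\lambda_\varphi) + \varphi$, which already matches the character claimed in the theorem once $\lambda_\varphi$ is known.

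The main content is the computation of $\delta(\varphi, \alpha)$ for each pair $(\varphi, \alpha)$. Since $\mathfrak s \subset \mathfrak t$ is abelian, the sufficient condition of Proposition~\ref{prop_alpha_in_Sigma2}(\ref{prop_alpha_in_Sigma2_b}) coincides with the necessary condition of~(\ref{prop_alpha_in_Sigma2_a}); hence $\alpha \in \Sigma$ if and only if $I_\alpha \not\subset \mathfrak h_u$, equivalently if and only if $\alpha \in \Supp \beta$ for some $\beta \in \Psi$. I would then invoke the defining property of the map $\pi \colon \Psi \to \Pi$ from~\cite{Avd_solv} to pass, by an induction on the height of such a~$\beta$ using the decomposition $\beta = \beta' + \gamma$ whenever possible, to a root $\gamma_0 \in \Psi$ with $\pi(\gamma_0) = \alpha$; this identifies $\Pi \cap \Sigma$ with $\pi(\Psi)$. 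Next, for $\alpha \in \Pi \cap \Sigma$, Proposition~\ref{prop_alpha_in_Sigma1}(\ref{prop_alpha_in_Sigma1_b}) isolates a unique $\mu \in \Phi$ with $\rho_\mu(\alpha) = 1$, and a parallel analysis of the $S$-weight decomposition of $\iota(\alpha)$ inside $\Lambda_S(\mathfrak u^-/\mathfrak h_u)$ shows this $\mu$ equals $\iota(\gamma)$ for the unique $\gamma \in \Psi$ satisfying $\pi(\gamma) = \alpha$ and $\iota(\gamma) = \mu$. Combining, $\delta(\varphi, \alpha) = 1$ if and only if $\alpha \in \pi(\Psi \cap \iota^{-1}(\varphi))$, whence $\lambda_\varphi = \sum \limits_{\beta \in \pi(\Psi \cap \iota^{-1}(\varphi))} \varpi_\beta$, as claimed.

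The expected main obstacle is this last identification: extracting the value of $\rho_\varphi(\alpha)$ explicitly requires a precise understanding of how $\iota(\alpha)$ decomposes into the indecomposable $S$-weights of the algebra $\CC[\mathfrak u^-/\mathfrak h_u]$, and this boils down to a nontrivial combinatorial fact about active roots and the map~$\pi$. Once that combinatorial input from~\cite{Avd_solv} is in hand, the theorem follows mechanically from the general framework of Section~\ref{sect_main_results}, which is precisely the simplification over the construction-based argument of~\cite{AvdG} that avoids producing $H$-semiinvariant vectors by hand.
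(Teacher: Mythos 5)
Your overall route is the same as the paper's: specialize Theorems~\ref{thm_gen_1}--\ref{thm_gen_3} to $L=T$, $K=S$, observe $\Xi_2=\varnothing$, reduce everything to computing $\delta(\varphi,\alpha)$, and determine $\Pi\cap\Sigma$ by combining the two halves of Proposition~\ref{prop_alpha_in_Sigma2} (which indeed collapse to a single criterion because $[\mathfrak s,\mathfrak s]=0$). Two points need attention.

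First, your claim that sphericity of $\mathfrak p_u/\mathfrak h_u$ forces $\iota$ to restrict to a bijection $\Psi\to\Phi$ is false. Multiplicity-freeness concerns the $S$-weights of $\mathfrak p_u/\mathfrak h_u$, whose dimension can be strictly smaller than $|\Psi|$ because $\mathfrak h_u$ need not be a sum of root spaces; several active roots can have the same restriction to~$S$. This is precisely why the theorem's $\lambda_\varphi$ is a sum over the (possibly several-element) set $\pi(\Psi\cap\iota^{-1}(\varphi))$ rather than a single fundamental weight, so your final formula would be inconsistent with your bijection claim. Fortunately the claim is not load-bearing: $\Xi'_3=\Phi$ holds by definition, and the rest of your argument only needs \emph{some} active root in each fiber $\Psi\cap\iota^{-1}(\varphi)$. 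The same issue infects the phrase ``the unique active root with $\iota(\alpha_\varphi)=\varphi$'' and ``the unique $\gamma\in\Psi$ satisfying $\pi(\gamma)=\alpha$''; neither uniqueness holds, but only the values $\iota(\gamma)$ are used, and these are independent of the choices.

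Second, the step you correctly flag as the main obstacle --- extracting $\rho_\varphi(\alpha)$ --- is left as an appeal to unspecified ``combinatorial input,'' and this is exactly where the paper's proof does its real work. The paper closes it as follows: Proposition~\ref{prop_pi_bijective} (i.e.\ \cite[Corollary~6]{Avd_solv}) gives, for $\beta\in\Psi$ with $\alpha\in\Supp\beta$, a bijection $\pi\colon F(\beta)\to\Supp\beta$, so one may assume $\alpha=\pi(\beta)$ (this replaces your height induction); \cite[Lemma~10]{Avd_solv} shows $\varphi_\alpha=\iota(\beta)$ is independent of the choice of such~$\beta$; \cite[Corollary~5.12(a)]{Avd_solv_inv} writes $\alpha=\sum_{\gamma\in F(\beta)}b_\gamma\gamma$ with $b_\beta=1$ (the coefficient $b_\beta=1$ coming from the defining property of~$\pi$); and \cite[Lemma~5]{Avd_solv} gives linear independence of $\lbrace\iota(\gamma)\mid\gamma\in F(\beta)\rbrace$, which together yield $\rho_{\varphi_\alpha}(\alpha)=1$ and hence, via Theorem~\ref{thm_gen_3}(\ref{thm_gen_3_b}) and Proposition~\ref{prop_alpha_in_Sigma1}(\ref{prop_alpha_in_Sigma1_b}), $\delta(\varphi,\alpha)=1$ exactly for $\varphi=\varphi_\alpha$. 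Until these specific facts are invoked (or reproved), your argument establishes the shape of the answer but not the values of the coefficients.
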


For every $\beta \in \Psi$, we put $F(\beta) = \lbrace \beta \rbrace \cup \lbrace \gamma \in \Psi \mid \beta - \gamma \in \Delta^+\rbrace$.
In what follows, we shall need the following result obtained in~\cite[Corollary~6]{Avd_solv}.

\begin{proposition} \label{prop_pi_bijective}
For every $\beta \in \Psi$, the restricted map $\pi \colon F(\beta) \to \Supp \beta$ is bijective.
\end{proposition}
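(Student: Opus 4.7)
The plan is to prove bijectivity of $\pi \colon F(\beta) \to \Supp \beta$ by induction on the height of~$\beta$, after recording one structural observation. The base case is $\beta \in \Pi$, where $F(\beta) = \lbrace \beta \rbrace$ (no decomposition $\beta = \gamma_1 + \gamma_2$ with $\gamma_i \in \Delta^+$ exists) and $\Supp \beta = \lbrace \beta \rbrace$; the defining property of $\pi(\beta)$ is vacuously satisfied by~$\beta$ itself, so $\pi(\beta) = \beta$. The key preliminary observation is: for every $\gamma \in F(\beta) \setminus \lbrace \beta \rbrace$, setting $\delta = \beta - \gamma \in \Delta^+$, the decomposition $\beta = \gamma + \delta$ together with $\gamma \in \Psi$ and the characterizing property of $\pi(\beta)$ force $\pi(\beta) \notin \Supp \gamma$; since $\Supp \beta = \Supp \gamma \cup \Supp \delta$, this gives $\pi(\beta) \in \Supp \delta$ and moreover $\delta \notin \Psi$.

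For injectivity, suppose $\gamma_1, \gamma_2 \in F(\beta)$ are distinct with $\pi(\gamma_1) = \pi(\gamma_2) = \alpha$. If one of them equals~$\beta$, say $\gamma_1 = \beta$, then $\alpha = \pi(\beta) \notin \Supp \gamma_2$ by the observation, contradicting $\pi(\gamma_2) \in \Supp \gamma_2$. If both $\gamma_i \ne \beta$, one may reorder so that $\gamma_1 \not\le \gamma_2$ in the root partial order and consider the expression $\beta - \gamma_1 - (\beta - \gamma_2)$; combined with the two decompositions and applications of the defining property of $\pi$ at~$\beta$, $\gamma_1$, and~$\gamma_2$, I would aim to derive a contradiction on which simple roots belong to the various supports. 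Alternatively, an inductive pass to a suitable ancestor active root to which both $\gamma_1, \gamma_2$ belong should yield the same conclusion.

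For surjectivity, the task is to exhibit, for each $\alpha \in \Supp \beta$, some $\gamma \in F(\beta)$ with $\pi(\gamma) = \alpha$. For $\alpha = \pi(\beta)$ one takes $\gamma = \beta$. For $\alpha \ne \pi(\beta)$, the plan is to descend from~$\beta$ step by step: pick a simple root $\alpha' \in \Supp \beta$ different from~$\alpha$ and~$\pi(\beta)$ such that $\beta - \alpha' \in \Delta^+$ (using the standard fact that the support of a positive root is connected, one can realize such single-root subtractions while staying in~$\Delta^+$), verify via the defining property of~$\pi$ that $\beta - \alpha' \in \Psi$, and iterate until a root $\gamma \in \Psi$ with $\Supp \gamma \ni \alpha$ and $\pi(\gamma) = \alpha$ is reached, applying the inductive hypothesis at each stage to control~$\pi$ of the intermediate roots.

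The main obstacle is the surjectivity step: at each descent one must simultaneously ensure that $\beta - \alpha'$ remains a positive root (a connectedness argument on $\Supp \beta$ in the Dynkin diagram), that it is still active, and that its value of~$\pi$ tracks correctly towards~$\alpha$. The characterization of~$\pi$ is implicit and may force different case analyses for long root strings in non-simply-laced types, so the delicate point is arranging the descent so that the equivalence $\gamma' \in \Psi \Leftrightarrow \pi(\widetilde \beta) \notin \Supp \gamma'$ can be applied at each step with the correct~$\widetilde \beta$.
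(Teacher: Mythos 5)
The paper gives no proof of this proposition at all: it is imported verbatim from \cite[Corollary~6]{Avd_solv}, where it is the endpoint of a developed structure theory of active roots. So the only question is whether your argument stands on its own, and it does not. Both halves are left as plans rather than proofs: for injectivity in the case $\gamma_1,\gamma_2 \ne \beta$ you write that you ``would aim to derive a contradiction'' (and the suggested ``inductive pass to an ancestor'' is not covered by an induction on the height of~$\beta$, since an ancestor has larger height); for surjectivity you yourself flag the descent as the main obstacle, and in addition the descent does not obviously stay inside $F(\beta)$ --- an element $\gamma = \beta - \alpha' - \alpha'' - \cdots$ belongs to $F(\beta)$ only if $\beta - \gamma$ is a \emph{single} positive root, a condition your iteration never checks.

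There is also a more fundamental problem: your entire argument uses only the quoted defining property of~$\pi$ together with root combinatorics, and that input is provably insufficient. Take $\Delta$ of type $B_2$ with $\Pi = \lbrace \alpha_1, \alpha_2 \rbrace$, and consider the abstract configuration $\Psi = \lbrace \beta \rbrace$ with $\beta = \alpha_1 + 2\alpha_2$ and $\pi(\beta) = \alpha_2$. The only decomposition of $\beta$ into two positive roots is $\beta = (\alpha_1+\alpha_2) + \alpha_2$; both summands contain $\alpha_2$ in their support and neither lies in~$\Psi$, so the defining property holds, and $\alpha_2$ is the unique simple root satisfying it. Yet $F(\beta) = \lbrace \beta \rbrace$ while $|\Supp \beta| = 2$, so the proposition fails for this configuration. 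Hence any correct proof must invoke facts about $\Psi$ that go beyond the defining property of~$\pi$ --- for instance, that if $\beta = \gamma + \delta$ with $\gamma, \delta \in \Delta^+$ both inactive then $\beta$ is inactive (which uses that $\mathfrak h$ is a subalgebra, via $[\mathfrak g_{-\gamma}, \mathfrak g_{-\delta}] = \mathfrak g_{-\beta}$), together with the finer results of \cite{Avd_solv} on how activity propagates along root strings. Your proposal never touches $\mathfrak h$ itself, so it cannot close this gap.
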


\begin{proof}[{Proof of Theorem~\textup{\ref{thm_AG}}}]
We apply our theorems from~\S\,\ref{sect_main_results}.
Clearly,
\[
\Xi_1 = \lbrace (\varpi_\alpha, -\iota(\varpi_\alpha)) \mid \alpha \in \Pi \rbrace \quad \text{and} \quad \Xi_2 = \varnothing.
\]
Next, for every $\varphi \in \Phi$ we fix an active root $\mu_\varphi \in \Psi$ such that $\iota(\mu_\varphi) = \varphi$.
Then Theorem~\ref{thm_gen_3}(\ref{thm_gen_3_a}) asserts the existence of a bijection $\Xi'_3 \to \Xi_3$, $\varphi \mapsto \Omega_\varphi$, such that $\Omega_\varphi \in (\mu_\varphi, 0) + \ZZ\Xi_1$.
As every element of $\mathfrak X(T)$ is an integer linear combination of the fundamental weights, for every $\lambda \in \mathfrak X(T)$ we have $(\lambda, -\iota(\lambda)) \in \ZZ \Xi_1$.
Consequently, $(\mu_\varphi, -\varphi) \in \ZZ \Xi_1$ and we can express $\Omega_\varphi$ in the form
\[
\Omega_\varphi = (0,\varphi) + \sum \limits_{\alpha \in \Pi} c_{\varphi,\alpha}(\varpi_{\alpha}, -\iota(\varpi_\alpha))
\]
with $c_{\varphi,\alpha} \in \ZZ$.
Then Theorem~\ref{thm_gen_3}(\ref{thm_gen_3_b}) yields $c_{\varphi,\alpha} = \delta(\varphi,\alpha)$ for all $\varphi \in \Phi$ and $\alpha \in \Pi$.

Now fix $\alpha \in \Pi$.
By~\cite[Lemma~4.4]{GaPe}, the ideal $I_\alpha$ equals the sum of all~$\mathfrak g_{-\beta}$ with $\beta \in \Delta^+$ and $\alpha \in \Supp \beta$.
If $\alpha \notin \bigcup \limits_{\beta \in \Psi} \Supp \beta$ then $I_\alpha\subset \mathfrak h_u$ and hence $\alpha \notin \Sigma$ by Proposition~\ref{prop_alpha_in_Sigma2}(\ref{prop_alpha_in_Sigma2_a}), which implies $c_{\varphi,\alpha} = 0$ for all $\varphi \in \Phi$.
In what follows we assume that $\alpha \in \Supp \beta$ for some $\beta \in \Psi$.
Clearly, $I_\alpha\not\subset \mathfrak h_u$ and $[\mathfrak s, \mathfrak s] = 0$, therefore $\alpha \in \Sigma$ by Proposition~\ref{prop_alpha_in_Sigma2}(\ref{prop_alpha_in_Sigma2_b}).

Making use of Proposition~\ref{prop_pi_bijective} we may assume in addition that $\alpha = \pi(\beta)$.
Put $\varphi_\alpha = \iota(\beta)$; then \cite[Lemma~10]{Avd_solv} guarantees that $\varphi_\alpha$ does not depend on the choice of $\beta \in \Psi$ with $\pi(\beta) = \alpha$.

By~\cite[Corollary~5.12(a)]{Avd_solv_inv}, one has $\alpha = \sum \limits_{\gamma \in F(\beta)} b_\gamma \gamma$ with $b_\gamma \in \ZZ$.
Using the defining property of~$\pi$, we find that $b_\beta = 1$.
Since the set $\lbrace \iota(\gamma) \mid \gamma \in F(\beta) \rbrace$ is linearly independent (see~\cite[Lemma~5]{Avd_solv}), it follows that $\rho_{\varphi_\alpha}(\alpha) = 1$.
Applying Theorem~\ref{thm_gen_3}(\ref{thm_gen_3_b}) and Proposition~\ref{prop_alpha_in_Sigma1}(\ref{prop_alpha_in_Sigma1_b}) we find that $c_{\varphi_\alpha, \alpha}=1$ and $c_{\varphi,\alpha} = 0$ for all $\varphi \in \Phi \setminus \lbrace \varphi_\alpha \rbrace$.

As a summary of the above considerations, for every~$\varphi \in \Phi$ and $\alpha \in \Pi$ we obtain
\[
c_{\varphi, \alpha} =
\begin{cases}
1 & \text{if} \ \alpha \in \pi(\Psi \cap \iota^{-1}(\varphi));\\
0 & \text{otherwise},
\end{cases}
\]
which completes the proof.
\end{proof}

\section{Examples}
\label{sect_examples}

In this section, we present three examples of computing the extended weight monoids, which demonstrate several features.
For simplicity, we write $\varpi_i$ instead of $\varpi_{\alpha_i}$.
In all examples, $G$ is a classical group and we choose $B,B^-,T$ to be the subgroup of all upper triangular, lower triangular, diagonal matrices, respectively, contained in~$G$.
The notation $\varpi'_i$ stands for the restriction of $\varpi_i$ to~$T \cap L'$ (which is a fundamental weight of~$L'$).

\begin{example} \label{ex_SL6}
Let $G = \SL_6$, then $\Pi = \lbrace \alpha_1,\ldots, \alpha_5 \rbrace$ where $\alpha_i(t) = t_it_{i+1}^{-1}$ for all $i = 1,\ldots,5$ and $t = \diag(t_1,\ldots,t_6) \in T$.
Let $P \subset G$ be the parabolic subgroup consisting of all matrices in~$G$ of the form
\begin{equation} \label{eqn_example1}
\begin{pmatrix} A & 0 \\ Y & B \end{pmatrix} \quad \text{(all blocks are of size~$3\times 3$)}
\end{equation}
and let $H$ be the subgroup of~$P$ defined by $B=(A^T)^{-1}$ and $Y = Y^T$.
Then $L$ (resp.~$K$) is defined in~$P$ (resp.~$H$) by $Y = 0$,
\[
C = \lbrace \diag(c,c,c,c^{-1},c^{-1},c^{-1}) \mid c \in \CC^\times \rbrace,
\]
and $\mathfrak p_u$ (resp.~$\mathfrak h_u$) consists of all matrices of the form~(\ref{eqn_example1}) satisfying $A=B=0$ (resp.~$A=B=0$ and $Y^T=Y$).
Note that $CK=K$.

Let $\chi$ be the character of $H$ sending the matrix in~(\ref{eqn_example1}) to $\det A$.

Clearly, $\Pi \setminus \Pi_L = \lbrace \alpha_3\rbrace$, therefore $\Xi_1 = \lbrace (\varpi_3, -\chi)\rbrace$.

Next, $L' \simeq \SL_3 \times \SL_3$, $L' \cap CK = L' \cap K \simeq \SL_3$ where $\SL_3$ is embedded in $\SL_3 \times \SL_3$ via $A \mapsto (A,(A^T)^{-1})$.
Then $L'/(L' \cap CK)$ is a symmetric space and
\[
\Xi'_2 = \lbrace (\varpi'_1 + \varpi'_4, 0), (\varpi'_2+\varpi'_5,0)\rbrace.
\]
Thus we get $\Xi_2 = \lbrace (\varpi_1 + \varpi_4, - \chi), (\varpi_2 + \varpi_5, - \chi) \rbrace$ and therefore $\Pi_{12} = \Pi$.

Clearly, $\mathfrak l = \mathfrak b_L + \mathfrak k$ and $B_S$ consists of all diagonal matrices in~$K$, therefore conditions~(\ref{S1})--(\ref{S6}) are satisfied with $S = B_S$, in which case $T \cap B_S = S$.
We now introduce characters $\psi_1,\psi_2,\psi_3 \in \mathfrak X(S)$ as follows.
For every
\[
t = \diag(t_1,t_2,t_3,t_1^{-1},t_2^{-1},t_3^{-1}) \in S
\]
and every $i = 1,2,3$, we put $\psi_i(t) = t_i$.
It is easy to see that the $S$-module $\mathfrak p_u / \mathfrak h_u$ is isomorphic to $\CC^1 \oplus \CC^1 \oplus \CC^1$ where the weights of the one-dimensional summands are $-\mu_1$, $-\mu_2$, and $-\mu_3$ with $\mu_1 = \psi_1 + \psi_2$, $\mu_2 = \psi_1 + \psi_3$, and $\mu_3 = \psi_2 + \psi_3$.
It follows that $\Xi'_3 = \lbrace \mu_1, \mu_2, \mu_3 \rbrace$.
We extend $\mu_1,\mu_2,\mu_3$ to $\mathfrak X(T)$ as $\widetilde \mu_1 = \varpi_2$, $\widetilde \mu_2 = \varpi_1 + \varpi_5$, $\widetilde \mu_3 = \varpi_4$.
Thus $\Xi_3 = \lbrace (\lambda_1,\chi_1), (\lambda_2,\chi_2), (\lambda_3,\chi_3)\rbrace$ with
\begin{equation*}
\begin{split}
(\lambda_1,\chi_1) &= (\varpi_2, 0) + a_1(\varpi_3,-\chi) + b_1(\varpi_1+\varpi_4,-\chi) + c_1(\varpi_2 + \varpi_5,-\chi);\\
(\lambda_2,\chi_2) &= (\varpi_1+ \varpi_5, 0) + a_2(\varpi_3,-\chi) + b_2(\varpi_1+\varpi_4,-\chi) + c_2(\varpi_2 + \varpi_5,-\chi);\\
(\lambda_3,\chi_3) &= (\varpi_4, 0) + a_3(\varpi_3,-\chi) + b_3(\varpi_1+\varpi_4,-\chi) + c_3(\varpi_2 + \varpi_5,-\chi),
\end{split}
\end{equation*}
and it remains to determine the coefficients $a_i,b_i,c_i$ for $i=1,2,3$.

We have $\Ker \iota = \ZZ\lbrace \varpi_1 - \varpi_3 + \varpi_4, \varpi_2 - \varpi_3 + \varpi_5 \rbrace$, hence by Theorem~\ref{thm_weight_lattice}
\[
\Lambda = \Ker \iota + \ZZ \lbrace \widetilde \mu_1, \widetilde \mu_2, \widetilde \mu_3 \rbrace = \ZZ \lbrace \varpi_2,\varpi_4, \varpi_1+\varpi_3, \varpi_1+\varpi_5,\varpi_3+\varpi_5 \rbrace.
\]
In particular, we find that $\Pi \subset \Lambda$.

For every $i=1,2,3$, put $\rho_i = \rho_{\mu_i}$.
The following table shows the values $\rho_i(\alpha_j)$:
\begin{center}
\begin{tabular}{|c|c|c|c|c|c|}
\hline

 & $\alpha_1$ & $\alpha_2$ & $\alpha_3$ & $\alpha_4$ & $\alpha_5$\\

\hline

$\rho_1$ & $0$ & $1$ & $0$ & $0$ & $-1$ \\

\hline

$\rho_2$ & $1$ & $-1$ & $1$ & $-1$ & $1$ \\

\hline

$\rho_3$ & $-1$ & $0$ & $0$ & $1$ & $0$ \\

\hline
\end{tabular}
\end{center}
We now compute the set~$\Sigma$.
Although the results of~\cite{Avd_degen} do not apply directly, we shall make use of some ideas presented in loc. cit.
First, \cite[Proposition~3.23]{Avd_degen} shows that $H$ has a finite index in its normalizer in~$G$, which implies $|\Sigma| = \rk \Lambda = 5$.
Next, let $\mathfrak h_0 \subset \mathfrak g$ be a Lie algebra obtained from~$\mathfrak h$ by degeneration via a one-parameter subgroup of~$G$ (here both $\mathfrak h$ and $\mathfrak h_0$ are regarded as points of the corresponding Grassmannian of~$\mathfrak g$) and let $N_0$ be the connected component of the identity of the normalizer of~$\mathfrak h_0$ in~$G$.
Then the discussion in~\cite[\S\S\,3.8--3.9]{Avd_degen} implies that each spherical root of $G/N_0$ is proportional to a spherical root of $G/H$.
To implement the above idea we consider the one-parameter subgroup $\phi \colon \CC^\times \to G$ given by $\phi(s) = \diag (s,1,s^{-1},s,1,s^{-1})$ and for every $s \in \CC^\times$ put $\mathfrak h_s = \phi(s)\mathfrak h$.
Then it is easy to see that $\mathfrak h_s$ consists of all matrices in $\mathfrak g$ of the form
\[
\begin{pmatrix}
a_1 & s^2a_2 & s^4a_3 & 0 & 0 & 0 \\
a_4 & a_5 & s^2a_6 & 0 & 0 & 0 \\
a_7 & a_8 & a_9 & 0 & 0 & 0 \\
y_1 & s^2y_2 & s^4y_3 & -a_1 & -s^2a_4 & -s^4a_7 \\
y_2 & y_4 & s^2y_5 & -a_2 & -a_5 & -s^2a_8 \\
y_3 & y_5 & y_6 & -a_3 & -a_6 & -a_9
\end{pmatrix}
\]
and the limit $\mathfrak h_0 = \lim \limits_{s \to 0} \mathfrak h_s$ consists of all such matrices with $s = 0$.
Applying again \cite[Proposition~3.23]{Avd_degen} we find that the subgroup $N_0$ consists of all matrices in~$G$ of the form
\[
\begin{pmatrix}
* & 0 & 0 & 0 & 0 & 0 \\
* & * & 0 & 0 & 0 & 0 \\
* & * & * & 0 & 0 & 0 \\
* & 0 & 0 & * & 0 & 0 \\
* & * & 0 & * & * & 0 \\
* & * & * & * & * & *
\end{pmatrix}.
\]
We see that $N_0$ is strongly solvable, hence \cite[Theorem~5.28(c)]{Avd_solv_inv} yields $\Sigma_G(G/N_0) = \lbrace \alpha_2,\alpha_3,\alpha_4 \rbrace$.
It immediately follows that $\lbrace \alpha_2,\alpha_3,\alpha_4 \rbrace \subset \Sigma$.
To determine the remaining two spherical roots in~$\Sigma$, we apply another reasoning.
Put $H_1 = K P_u$, then $G/H_1$ is parabolically induced from~$L/K$, which implies $\Sigma_G(G/H_1) = \Sigma_L(L/K)$ by~\cite[Proposition~20.13]{Tim}.
As $L/K \simeq L'/(L' \cap K)$, we obtain $\Sigma_G(G/H_1) = \lbrace \alpha_1 + \alpha_4, \alpha_2 + \alpha_5 \rbrace$.
Since $H \subset H_1$, Proposition~\ref{prop_two_SS} along with the list of all possible spherical roots (see, for instance, \cite[Table~1]{Avd_solv_inv}) yield the following two possibilities: either $\alpha_1+\alpha_4 \in \Sigma$ or $\alpha_1, \alpha_4 \in \Sigma$.
But the former one is impossible since $\alpha_4 \in \Sigma$ and $(\alpha_1 + \alpha_4, \alpha_4) > 0$, therefore $\alpha_1 \in \Sigma$.
Similarly, $\alpha_5 \in \Sigma$ and we finally obtain $\Sigma = \Pi$.

Knowing~$\Pi \cap \Sigma$, from Theorem~\ref{thm_gen_3}(\ref{thm_gen_3_b}) we deduce that $a_1=b_1=c_1 = 0$, $a_2 = 1$, $b_2=c_2 = 0$, and $a_3=b_3=c_3=0$.
Thus
\[
\Xi = \lbrace (\varpi_3,-\chi), (\varpi_1+\varpi_4,-\chi), (\varpi_2+\varpi_5,-\chi), (\varpi_2,0),(\varpi_1+\varpi_3+\varpi_5,-\chi),(\varpi_4,0) \rbrace.
\]
\end{example}

\begin{example} \label{ex_SL3}
Let $G = \SL_3$, then $\Pi = \lbrace \alpha_1,\alpha_2 \rbrace$ where $\alpha_i(t) = t_it_{i+1}^{-1}$ for all $i = 1,2$ and $t = \diag(t_1,t_2,t_3) \in T$.
Let $H \subset G$ be the subgroup of all matrices in~$G$ of the form
\[
\begin{pmatrix}
* & 0 & 0 \\
0 & * & 0 \\
0 & * & *
\end{pmatrix}
\]
and put $K = T$.
There are two different choices of a parabolic subgroup~$P \supset B^-$ such that $H$ is regularly embedded in~$P$, and we first fix the choice under which $P$, $L$, $C$ consist of all matrices in~$G$ of the form
\[
\begin{pmatrix}
* & * & 0 \\
* & * & 0 \\
* & * & *
\end{pmatrix}, \
\begin{pmatrix}
* & * & 0 \\
* & * & 0 \\
0 & 0 & *
\end{pmatrix}, \
\begin{pmatrix}
c & 0 & 0 \\
0 & c & 0 \\
0 & 0 & c^{-2}
\end{pmatrix},
\]
respectively.
Note that $CK =K$.
Clearly, $\mathfrak p_u$ and $\mathfrak h_u$ are given by all matrices of the form
\[
\begin{pmatrix}
0 & 0 & 0 \\
0 & 0 & 0 \\
* & * & 0
\end{pmatrix}
\ \text{and} \
\begin{pmatrix}
0 & 0 & 0 \\
0 & 0 & 0 \\
0 & * & 0
\end{pmatrix},
\]
respectively.

Since $\Pi \setminus \Pi_L = \lbrace \alpha_2 \rbrace$, we have $\Xi_1 = \lbrace (\varpi_2,-\varpi_2) \rbrace$.

Next, $L' \simeq \SL_2$, $L' \cap CK = L' \cap K \simeq \CC^\times$ where $\CC^\times$ is a maximal torus of~$\SL_2$, and we have $\Xi'_2 = \lbrace (\varpi_1', \chi), (\varpi_1', - \chi) \rbrace$ where $\chi$ is a generating character of~$\mathfrak X(L' \cap K)$.
Then $\Xi_2 = \lbrace (\varpi_1, -\varpi_1), (\varpi_1, \varpi_1-\varpi_2) \rbrace$ and therefore $\Pi_{12}= \Pi$.

Note that $B_LK$ is not open in~$L$; therefore, to compute the set~$\Xi_3$, we replace the subgroup $H$ with $H_1 = gHg^{-1}$ where \[
g = \begin{pmatrix} 1 & 0 & 0 \\ 1 & 1 & 0 \\ 0 & 0 & 1 \end{pmatrix}
\]
and put $K_1 = gKg^{-1}$.
Then $H_1$, $K_1$, and $(\mathfrak h_1)_u$ consist of all matrices of the form
\[
\begin{pmatrix}
t_1 & 0 & 0 \\
t_1-t_2 & t_2 & 0 \\
-y & y & t_3
\end{pmatrix}, \
\begin{pmatrix}
t_1 & 0 & 0 \\
t_1-t_2 & t_2 & 0 \\
0 & 0 & t_3
\end{pmatrix}, \
\begin{pmatrix}
0 & 0 & 0 \\
0 & 0 & 0 \\
-z & z & 0
\end{pmatrix},
\]
respectively, with $t_1t_2t_3 \ne 0$.
Observe that $B_LK_1$ is open in~$L$ and the group $B_S = K_1 \cap B_L$ equals~$C$.
Then conditions (\ref{S1})--(\ref{S6}) are satisfied by $K_1$ and~$S = B_S = C$, in which case $\Ker \iota = \ZZ\lbrace \alpha_1 \rbrace$.
Clearly, $\mathfrak p_u/(\mathfrak h_1)_u$ is a one-dimensional $S$-module of weight~$-\mu$ where $\mu = \iota(\alpha_2)$, hence $\Xi'_3 = \lbrace \mu \rbrace$ and we can take $\widetilde \mu = \alpha_2$.
Thus $\Lambda = \Ker \iota + \ZZ \widetilde \mu = \ZZ\Pi$.
In particular, we find that $\Pi_{12} \subset \Lambda$.

We have $\Xi_3 = \lbrace (\lambda, \chi) \rbrace$ where
\[
(\lambda, \chi) = (2\varpi_2-\varpi_1, 0) + a(\varpi_2,-\varpi_2) + b(\varpi_1,-\varpi_1) + c(\varpi_1,\varpi_1-\varpi_2).
\]
Next, $\rho_\mu(\alpha_1) = 0$ and $\rho_\mu(\alpha_2) = 1$.
As $H$ is strongly solvable, from \cite[Theorem~5.28(c)]{Avd_solv_inv} we find that $\Sigma = \Pi$.
Then Theorem~\ref{thm_gen_3}(\ref{thm_gen_3_b}) yields $a=-1$ and $b+c=1$, which does not determine the coefficients uniquely.
The reason is that $K$ is contained in a proper parabolic subgroup of~$L$.

The second choice of the parabolic subgroup~$P$ is $P = B^-$, in which case an application of Theorem~\ref{thm_AG} yields
\[
\Xi = \lbrace (\varpi_1,-\varpi_1), (\varpi_2,-\varpi_2), (\varpi_1, \varpi_1-\varpi_2), (\varpi_2,\varpi_1) \rbrace.
\]
\end{example}

The next example is based on~\cite[Example~4.9]{Avd_degen}.

\begin{example} \label{ex_SO7}
Let $G = \SO_7$ preserving the symmetric bilinear form on~$\CC^7$ whose matrix has ones on the antidiagonal and zeros elsewhere.
Then the Lie algebra~$\mathfrak g$ consists of all $(7\times7)$-matrices that are skew-symmetric with respect to the antidiagonal.
We have $\Pi = \lbrace \alpha_1, \alpha_2, \alpha_3 \rbrace$ where $\alpha_1(t)=t_1t_{2}^{-1}$, $\alpha_2(t)=t_2t_{3}^{-1}$, $\alpha_3(t) = t_3$ for all $t = \diag(t_1,t_2,t_3,1,t_3^{-1},t_2^{-1},t_1^{-1}) \in T$.
Consider a connected subgroup $H \subset G$ regularly embedded in a parabolic subgroup $P \supset B^-$ such that the Lie algebras $\mathfrak h$ and $\mathfrak p$ consist of all matrices in $\mathfrak g$ having the form
\[
\begin{pmatrix}
x+y & 0 & 0 & 0 & 0 & 0 & 0\\
a & x & * & 0 & 0 & 0 & 0\\
b & * & y & 0 & 0 & 0 & 0\\
2c & -b & a & 0 & 0 & 0 & 0\\
* & c & 0 & -a & -y & * & 0\\
* & 0 & -c & b & * & -x & 0\\
0 & * & * & -2c & -b & -a & -x-y\\
\end{pmatrix}
\ \text{and} \
\begin{pmatrix}
* & 0 & 0 & 0 & 0 & 0 & 0\\
* & * & * & 0 & 0 & 0 & 0\\
* & * & * & 0 & 0 & 0 & 0\\
* & * & * & 0 & 0 & 0 & 0\\
* & * & 0 & * & * & * & 0\\
* & 0 & * & * & * & * & 0\\
0 & * & * & * & * & * & *\\
\end{pmatrix},
\]
respectively.
Then $L$ consists of all block-diagonal matrices with the sequence of blocks $(s, A, 1, (A^\natural)^{-1}, s^{-1})$ where $s \in \CC^\times$, $A \in \GL_2$, and $A^\natural$ stands for the transpose of $A$ with respect to the antidiagonal;
$K$ consists of all matrices in $L$ satisfying $s = \det A$;
and $C$ consists of all diagonal matrices of the form $\diag(t_1,t_2,t_2,1,t_2^{-1}, t_2^{-1},t_1^{-1})$.
Note that $K \simeq \GL_2$ and there is a $\GL_2$-module isomorphism $\mathfrak p_u/\mathfrak h_u \simeq \CC^2 \oplus \CC^1$ where the latter module is acted on by $\GL_2$ via $(A, (x,y)) \mapsto ((\det A)^{-1}Ax, (\det A)^{-1}y)$.

In what follows, for every subgroup $F \subset G$ we denote by $\widetilde F$ the preimage of~$F$ in $\widetilde G = \Spin_7$ and regard each character of~$F$ as a character of~$\widetilde F$.
The sets $\Xi_1,\Xi_2,\Xi_3$ will be considered for the homogeneous space $\widetilde G / \widetilde H$ (isomorphic to $G/H$ as a $\widetilde G$-variety).

Let $\psi_1$ (resp.~$\psi_3$) be the restriction of $\varpi_1$ (resp.~$\varpi_3$) to~$C$.

Clearly, $\Pi \setminus \Pi_L = \lbrace \alpha_2 \rbrace$, which yields $\Xi_1 = \lbrace (\varpi_1,-\psi_1), (\varpi_3,-\psi_3) \rbrace$.

Next observe that $CK = L$, therefore $\Xi_2 = \varnothing$ and hence $\Pi_{12} = \lbrace \alpha_1, \alpha_3 \rbrace$.

We automatically have $S = K$, and so $\Ker \iota = \ZZ\lbrace \alpha_1 - \alpha_3 \rbrace$.
The $S$-module $\mathfrak p_u / \mathfrak h_u$ is the direct sum of two simple $S$-modules: the first one of dimension~$2$ with lowest weight $-\mu_1$ and the second one of dimension~$1$ with lowest weight $-\mu_2$ where $\mu_1 = \iota(\alpha_1+\alpha_2)$ and $\mu_2 = \iota(\alpha_1+\alpha_2+\alpha_3)$.
It follows that $\Xi'_3 = \lbrace \mu_1, \mu_2 \rbrace$ and we can take $\widetilde \mu_1 = \alpha_1+\alpha_2$ and $\widetilde \mu_2 = \alpha_1 + \alpha_2 + \alpha_3$.
Thus $\Lambda = \Ker \iota + \ZZ \lbrace \widetilde \mu_1, \widetilde \mu_2\rbrace = \ZZ\Pi$.
In particular, we find that $\Pi_{12} \subset \Lambda$.

We have $\Xi_3 = \lbrace (\lambda_1,\chi_1), (\lambda_2,\chi_2) \rbrace$ where
\begin{equation*}
\begin{split}
(\lambda_1,\chi_1) &= (\varpi_1+\varpi_2-2\varpi_3, 0) + a_1(\varpi_1, - \psi_1) + b_1(\varpi_3,-\psi_3);\\
(\lambda_2,\chi_2) &= (\varpi_1,0) + a_2(\varpi_1,-\psi_1) + b_2(\varpi_3,-\psi_3).
\end{split}
\end{equation*}

The following table shows the values $\rho_i(\alpha_j)$:
\begin{center}
\begin{tabular}{|c|c|c|c|}
\hline

 & $\alpha_1$ & $\alpha_2$ & $\alpha_3$ \\

\hline

$\rho_1$ & $-1$ & $2$ & $-1$ \\

\hline

$\rho_2$ & $1$ & $-1$ & $1$ \\

\hline
\end{tabular}
\end{center}

As $L' \subset K \subset L$, we can apply algorithms developed in~\cite{Avd_degen} and compute $\Sigma = \lbrace \alpha_1 + \alpha_2, \alpha_2 + \alpha_3, \alpha_3 \rbrace$, which implies $\Pi_{12} \cap \Sigma = \lbrace \alpha_3 \rbrace$.
Then we deduce from Theorem~\ref{thm_gen_3}(\ref{thm_gen_3_b}) that $a_1 = -1$, $b_1 = 2$, $a_2 = -1$, $b_2 = 1$ and finally obtain
\[
\Xi = \lbrace (\varpi_1,-\psi_1), (\varpi_3, -\psi_3), (\varpi_2, \psi_1-2\psi_3), (\varpi_3,\psi_1-\psi_3)\rbrace.
\]

Note that for $\alpha_1 \in \Pi_{12}$ the necessary conditions of Propositions~\ref{prop_alpha_in_Sigma1} and~\ref{prop_alpha_in_Sigma2}(\ref{prop_alpha_in_Sigma2_a}) are fulfilled but $\alpha_1 \notin \Sigma$.
Thus the above-mentioned conditions are not sufficient in general.
\end{example}

\section{Concluding remarks}
\label{sect_concluding_remarks}

Retain the notation of~\S\S\,\ref{sect_main_results}--\ref{sect_proofs}.
In this section, we discuss several directions of further research related to the problem of computing the set $\Pi_{12} \cap \Sigma$ purely in terms of $P$ and~$H$.
Let $\alpha \in \Pi_{12}$.

If $\alpha \in \Pi_L$ then $P_\alpha \cap L \supset B_L$ is a minimal parabolic subgroup of~$L$, therefore by Proposition~\ref{prop_localization}(\ref{prop_localization_a}) and Lemma~\ref{lemma_unstable} the condition $\alpha \in \Sigma$ is equivalent to $\alpha \in \Sigma_L(P/H)$.
Thus a natural problem is to develop methods for computing the set $\Sigma_L(P/H)$.
By~\cite[Theorem~3.7]{Tim}, $P/H$ is a smooth affine spherical $L$-variety.
Up to certain reductions, a complete classification of all smooth affine spherical varieties was obtained by Knop and Van Steirteghem in~\cite{KnVS}, and it is natural and seemingly feasible to compute all relevant invariants (including the weight monoid and spherical roots) for all items in their list.

If $\alpha \in \Pi \setminus \Pi_L$ then, again by Proposition~\ref{prop_localization}(\ref{prop_localization_a}) and Lemma~\ref{lemma_unstable} (see also Lemma~\ref{lemma_alpha_in_Sigma}), the condition $\alpha \in \Sigma$ holds if and only if $U_{-\alpha}$ moves an element in~$\mathcal E$ if and only if $U_{-\alpha}$ moves an element in~$\mathcal F$.
Now observe that $U_{-\alpha}$ is normalized by $B_L$ and hence by~$B_S$, therefore we fall into the following situation: given a connected reductive group $N$, an affine spherical $N$-variety $X$ is equipped with an action of a one-dimensional unipotent group~$U$ normalized by a Borel subgroup $B_N \subset N$, and the question is whether $U$ moves a $B_N$-stable prime divisor in~$X$.
In our situation, $N = L$, $X = P/H$, $B_N = B_L$, $U = U_{-\alpha}$ or $N = S^0$, $X = P_u/H_u$, $B_N = B_S^0$, $U = U_{-\alpha}$.
When $N = B_N$ is a torus and the action of $U$ on~$X$ is nontrivial then the following statements are known to be true:
\begin{enumerate}[label=\textup{(\arabic*)},ref=\textup{\arabic*}]
\item \label{toric1}
\cite[Theorem~2.7 and Corollary~2.8]{Lie} the image of the Lie algebra of $U$ in the derivations of $\CC[X]$ is uniquely determined by the $B_N$-weight of~$U$;

\item \label{toric2}
\cite[Lemma~2.2]{AZK} $U$ moves a $B_N$-stable prime divisor in~$X$.
\end{enumerate}
For the action of $U_{-\alpha}$ on $P_u/H_u$, we have a criterion of nontriviality given by Lemma~\ref{lemma_acts_nontrivially}.
Unfortunately, in Example~\ref{ex_SO7} the groups $U_{-\alpha_1}, U_{-\alpha_3}$ both act nontrivially on $P_u/H_u$ and have the same $B_S$-weight $\iota(\alpha_1)=\iota(\alpha_3)$; however, by Lemma~\ref{lemma_alpha_in_Sigma}, $U_{-\alpha_3}$ moves an element in~$\mathcal F$ and $U_{-\alpha_1}$ does not, which means that in the general case both properties~(\ref{toric1}) and~(\ref{toric2}) may fail.
Thus the question of existence of a $U_{-\alpha}$-unstable element in~$P_u/H_u$ requires either a deeper study of $B_N$-normalized actions of one-dimensional unipotent groups on affine spherical $N$-varieties or another approach.


\end{document}